\newcommand{\norm}{\,|\!|\,}
\newcommand{\loc}{\text{\rm loc}}
\newcommand{\osc}{\text{\rm osc}}
\numberwithin{equation}{section} 
\newtheorem{theorem}{\textbf{THEOREM}}[section]
\newtheorem{lemma}{\textsc{Lemma}}[section]
\newtheorem{corollary}{\textsc{Corollary}}[section]
\theoremstyle{definition}
\theoremstyle{remark} \newtheorem{remark}{Remark}[section]}
\newcommand{\divo}{\textnormal{div}_{H}}
\def\eqn#1$$#2$${\begin{equation}\label#1#2\end{equation}}
\newcommand{\R}{\mathbb R}
\newcommand{\X}{\mathfrak X}
\newcommand{\Xu}{\mathfrak X u}
\renewcommand{\epsilon}{\varepsilon}
\def\mvint_#1{\mathchoice
          {\mathop{\vrule width 6pt height 3 pt depth -2.5pt
                  \kern -8pt \intop}\nolimits_{\hspace{-1ex}#1}}%
          {\mathop{\vrule width 5pt height 3 pt depth -2.6pt
                  \kern -6pt \intop}\nolimits_{\hspace{-1ex}#1}}%
          {\mathop{\vrule width 5pt height 3 pt depth -2.6pt
                  \kern -6pt \intop}\nolimits_{\hspace{-1ex}#1}}%
          {\mathop{\vrule width 5pt height 3 pt depth -2.6pt
                  \kern -6pt \intop}\nolimits_{\hspace{-1ex}#1}}}
\newcommand{\deltaX}{\big(\delta +|\Xu|^2\big)}
\newcommand{\weight}{\big(\delta+|\Xu|^2\big)^\frac{p-2}{2}}
\newcommand{\weights}{\big(\delta+|\Xu|^2\big)^\frac{p-2+\beta}{2}}
 \newcommand{\intav}{-\hskip -1.1em\int} \newbox\tratto
\newcommand{\medint}{\displaystyle\copy\tratto\kern-10.4pt\int\limits}
\begin{document}

\title[Variational problems in the Heisenberg group]
{Regularity for variational problems in the Heisenberg group}
\author{Xiao Zhong}
\thanks{Zhong was supported by the Academy of Finland.}
\subjclass[2000]{Primary 35H20, 35J70} \keywords{Heisenberg group,
$p$-Laplacian, weak solutions, regularity}
\date{\today}
\address{Department of Mathematics and Statistics, University of Helsinki, P.O. Box 68 (Gustaf H\"allstr\"omin katu 2b)
FI-00014 University of Helsinki, Finland}


\begin{abstract}
We study the regularity of minima of scalar variational
integrals of $p$-growth, $1<p<\infty$, in the Heisenberg group.
\end{abstract}

\maketitle

\begin{center} \textit{In memoriam: Juha Heinonen (1960 - 2007)}\end{center}

\setcounter{tocdepth}{1}
 \tableofcontents

\section{Introduction\label{section:intro}}

In this paper, we study the regularity of minima of scalar variational
integrals in the Heisenberg group ${\mathbb H}^n, n\ge 1$. We
consider the variational problem
\begin{equation}\label{functional}
I(u)=\int_\Omega f(\X u)\, dx. \end{equation} Here $\Omega$ is a
domain in ${\mathbb H}^n$, $u:\Omega\to {\mathbb R}$ is a function
and $\X u=(X_1u, X_2u,\ldots,X_{2n}u)$ its horizontal gradient.
The convex integrand function $f\in C^2({\mathbb R}^{2n};{\mathbb
R})$ is of $p$-growth, $1<p<\infty$. It satisfies the following
growth and ellipticity conditions
\begin{equation}\label{structure}
\begin{aligned}(\delta+\vert z\vert^2)^{\frac{p-2}{2}} \vert\xi\vert^2\le
\langle D^2f(z) &\xi,\xi\rangle \le L(\delta+\vert
z\vert^2)^{\frac{p-2}{2}}\vert \xi\vert^2;\\
\vert Df(z)\vert & \le L(\delta+\vert z\vert^2)^{\frac{p-1}{2}}
\end{aligned}
\end{equation}
for all $z, \xi\in \R^{2n}$, where $\delta\ge 0, L\ge 1$ are
constants.

The natural domain of variational problem (\ref{functional}) is the horizontal
Sobolev space $HW^{1,p}(\Omega)$, see Section
\ref{section:preliminaries} for the definition. By the direct
method in Calculus of Variations, we can easily prove the
existence of minimizers in $HW^{1,p}(\Omega)$ with prescribed boundary value for
functional (\ref{functional}) under the structure condition
(\ref{structure}). The corresponding Euler-Lagrange equation of
(\ref{functional}) is
\begin{equation}\label{equation:main}
\divo \big(Df(\X u)\big)=\sum_{i=1}^{2n}X_i\big(D_i f(\X u)\big)=0.
\end{equation}
where $Df=(D_1 f, D_2f,\ldots,D_{2n}f)$ is the Euclidean gradient
of $f$. It is easy to see that a function in $HW^{1,p}(\Omega)$ is
a local minimizer of (\ref{functional}) if and only if it is a
weak solution of equation (\ref{equation:main}), see Section
\ref{section:preliminaries} for the definitions of local
minimizers and weak solutions.

A typical example of (\ref{functional}) is the following
$p$-energy functional
\begin{equation}\label{penergy}
I(u)=\int_\Omega\big(\delta+\vert \Xu\vert^2\big)^\frac{p}{2}\, dx
\end{equation}
for a constant $\delta\ge 0$. The corresponding Euler-Lagrange
equation is the non-degenerate $p$-Laplacian equation
\begin{equation}\label{equation:p:nondeg}
\divo \big(\weight\X u\big)=0,\end{equation} when $\delta>0$,
and the $p$-Laplacian equation
\begin{equation}\label{equation:p}
\divo\big(\vert\X u\vert^{p-2}\X u\big)=0,
\end{equation}
when $\delta=0$. The weak solutions of equation (\ref{equation:p})
are called $p$-harmonic functions.

There are several interesting cases of values of $p$. When $p=2$,
equation (\ref{equation:p}) is reduced to the Laplacian equation, 
and the solutions are called harmonic functions.
Equation (\ref{equation:p}) is linear when $p=2$; the sum of two
harmonic functions is harmonic. For $p$ different from $2$,
equation (\ref{equation:p}) is not linear; the sum of two
$p$-harmonic functions is not a $p$-harmonic function, in general.

When $p=2n+2$, the Hausdorff dimension of ${\mathbb H}^n$, 
equation (\ref{equation:p}) is tightly connected with the conformal 
mappings, and equation (\ref{equation:main}) the quasiconformal mappings in the Heisenberg group.
We refer these connections and the study of quasiconformal mappings in Carnot groups
to the work of  Kor\`anyi and Reimann \cite{KR1, KR2}, Heinonen and Holopainen \cite{HH}, Capogna \cite{Ca2}, and
Capogna and Cowling \cite{CC}. 
Other two interesting cases are $p=1$ and $p=\infty$, which are excluded in this work. We refer to \cite{CCM1, CCM2, B} and the 
references therein for the study of mean curvature equation and infinite Laplacian equation in Carnot groups.

The regularity theory for equation (\ref{equation:main}) is well
established in the case $p=2$. The study of regularity goes back
to the work of H\"ormander. The classical paper \cite{H} of
H\"ormander treated the linear equation with a general vector
fields. We also mention the remarkable work \cite{F, F2, Kohn} for
the linear equation. We refer to the monograph \cite{BLU} for
the explosive studies and historic notes for the linear equations
in the Heisenberg group, or more generally in the Carnot groups. In
the case $p=2$, when equation (\ref{equation:main}) is not linear,
Capogna obtained the H\"older continuity for the gradient of weak
solutions \cite{Ca1, Ca2}, under the structure condition
(\ref{structure}).

For $p\neq 2$, equation ({\ref{equation:main}) is quasilinear. It
is known that the weak solutions of equation (\ref{equation:main})
are H\"older continuous \cite{CDG}. Concerning the gradient of the
weak solutions, the partial regularity result (regularity outside
a set of measure zero) was obtained in \cite{CG}, see also \cite{Fog}. By the
Cordes perturbation techniques, Domokos and Manfrendi \cite{DM,
DM2} proved the H\"older continuity of the gradient of $p$-harmonic functions when $p$ is
close to $2$. No explicit bound on $p$ was given.

Other regularity results concerning equation (\ref{equation:main})
for $p\neq 2$ include the following ones. Domokos \cite{D} showed
that the vertical derivative $Tu\in L^p_{\loc}(\Omega)$, if $1<p<4$, for the weak solutions $u$
of equation (\ref{equation:main}). He also showed the
integrability of second order horizontal derivatives of $u$. This
extends an earlier result of Marchi \cite{Ma}. The
Lipschitz continuity of $u$ was obtained in \cite{MZZ} for $p$ in
the range $[2,4)$. This result is true not only for the
non-degenerate case ($\delta>0$) but also for the degenerate one
($\delta=0$). This extends an earlier result of Manfredi and
Mingione \cite{MM} concerning the Lipschitz continuity for the
non-degenerate case $(\delta>0)$ with $p$ in a smaller range. Both
of the proofs in \cite{MM,MZZ} use Domokos' result on the
integrability of $Tu$. The restriction on $p, p<4,$ was
unavoidable.

Now a natural question arises: is there a regularity theory for
equation ({\ref{equation:main}) in the Heisenberg group, which is
similar to that in the Euclidean setting? This is the case for
$p=2$. For $p\neq 2$, it is well known that weak solutions of
equations of type (\ref{equation:main}) in the Euclidean spaces
have H\"older continuous derivatives, see \cite{Ur, LU, E, Di, Le, To}.
The sharp H\"older exponent was obtained in \cite{IM} for the
$p$-harmonic functions in the plane. The $C^{1,\alpha}$ regularity
is optimal when $p\ge 2$.

In this paper, we study the regularity of weak solutions of
equation (\ref{equation:main}) both in the non-degenerate case and
in the degenerate case. First, we prove the boundedness of the
horizontal gradient, and hence the Lipschitz continuity of weak
solutions for all $1<p<\infty$. We remark that this result holds both for
the non-degenerate case $\delta>0$ and for the degenerate one $\delta=0$. 

\begin{theorem}\label{thm:lip}
Let $1<p<\infty$, $\delta\ge 0$ and $u\in HW^{1,p}(\Omega)$ be a weak solution of
equation (\ref{equation:main}). Then $\Xu \in
L^\infty_{\loc}(\Omega;{\mathbb R}^{2n})$. Moreover, for any
ball $B_{2r}\subset \Omega$, we have that
\begin{equation}\label{Xu:bdd}
\sup_{B_r}\vert\X u\vert\le c\Big(\intav_{B_{2r}} \deltaX^{\frac p 2}\, dx\Big)^{\frac 1 p},
\end{equation}
where $c>0$ depends only on $n,p,L$.
\end{theorem}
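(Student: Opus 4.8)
The plan is to run the De~Giorgi--Nash--Moser iteration adapted to the sub-Riemannian structure: derive Caccioppoli-type inequalities for powers of the horizontal gradient and feed them into the Sobolev inequality of $\mathbb H^n$. Two preliminary reductions. First, replacing $\delta$ by $\delta+\epsilon>0$ and mollifying $f$ (keeping the constants in (\ref{structure})), it suffices to prove (\ref{Xu:bdd}) with a constant independent of $\epsilon$ and then let $\epsilon\to0$. Second, since $f$ is only $C^2$ and a weak solution need not possess second derivatives, the ``differentiated equations'' and ``Caccioppoli inequalities'' below are really to be read through horizontal and vertical difference quotients $\DhX u$, $\DhT u$, which at a first stage yields weighted $L^2$-bounds uniform in $h$, hence the existence of the distributional derivatives $X_s\Xu$ and $Tu$, after which the same scheme is iterated. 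For readability I describe the estimates at the formal level.

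Step~1 (differentiated equations). Using test functions of the form $-X_s\varphi$ in the weak formulation (legitimate since the $X_i$ are divergence free) and commuting $X_s$ past the operator, one finds that $v_s:=X_s u$ solves a linearized equation $\sum_i X_i\big(\sum_j D_{ij}f(\Xu)\,X_j v_s\big)=R_s$, where the source $R_s$ comes entirely from the brackets $[X_s,X_i]$ and hence, since in $\mathbb H^n$ every such bracket is a constant multiple of $T$, involves only the vertical derivative $Tu$ and its horizontal derivatives. Testing (\ref{equation:main}) against $-T\varphi$ and using $[T,X_i]=0$ shows that $w:=Tu$ solves the \emph{homogeneous} linearized equation $\sum_i X_i\big(\sum_j D_{ij}f(\Xu)\,X_j w\big)=0$. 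Thus the components of $\Xu$ and its vertical derivative solve the same nonuniformly elliptic system, coupled only through $R_s$.

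Step~2 (coupled Caccioppoli inequalities). For $\beta\ge0$ and a cutoff $\eta$, I would test the equation for $v_s$ with $\eta^2\,\deltaX^{\frac{\beta}{2}}v_s$ (summed over $s$) and the equation for $w$ with $\eta^2\,\deltaX^{\frac{\beta}{2}}w$. Using the two-sided bound in (\ref{structure}) and Young's inequality, this produces two estimates bounding $\int\eta^2\,\deltaX^{\frac{p-2+\beta}{2}}|\X(\Xu)|^2$ and $\int\eta^2\,\deltaX^{\frac{p-2+\beta}{2}}|\X(Tu)|^2$ by the good term $\int(|\X\eta|^2+\eta^2)\,\deltaX^{\frac{p+\beta}{2}}$ plus bad terms mixing $Tu$, $\X(\Xu)$ and $\X(Tu)$. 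Here the key algebraic fact enters: for the appropriate pair of indices $Tu$ equals, up to a constant, $X_iX_{i+n}u-X_{i+n}X_iu$, so $|Tu|\le c\,|\X(\Xu)|$ pointwise. Absorbing the bad terms with this crude bound costs one power of $\deltaX$, which is exactly what confined the earlier arguments of \cite{D,MM,MZZ} to $p<4$; the new ingredient is to exploit that $w$ solves a homogeneous equation to extract a \emph{second}, sharper Caccioppoli inequality for $\int\eta^2\,\deltaX^{\frac{p-2+\beta}{2}}|Tu|^2$ carrying the missing weight, and then to choose the Young exponents so that the mixed terms of the two inequalities absorb one another for every $1<p<\infty$. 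Producing this $p$-uniform Caccioppoli inequality is the main obstacle.

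Step~3 (iteration). The outcome should be $\int\eta^2\,\deltaX^{\frac{p-2+\beta}{2}}|\X(\Xu)|^2\le c\,(1+\beta)^{c}\int(|\X\eta|^2+\eta^2)\,\deltaX^{\frac{p+\beta}{2}}$ for all $\beta\ge0$, with $c=c(n,p,L)$. Writing $V:=\deltaX^{1/2}$, the left-hand side dominates $(1+\beta)^{-2}\int\eta^2|\X(V^{\frac{p+\beta}{2}})|^2$, so, together with the cutoff term, the Sobolev inequality for $X_1,\dots,X_{2n}$ (homogeneous dimension $Q=2n+2$) upgrades $\eta V^{\frac{p+\beta}{2}}\in L^2$ to $L^{\frac{2Q}{Q-2}}$ with quantitative control; that is, $V$ gains integrability from exponent $p+\beta$ to $\chi(p+\beta)$ with $\chi:=Q/(Q-2)>1$. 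Iterating with $p+\beta_{k+1}=\chi(p+\beta_k)$ along radii decreasing geometrically from $2r$ to $r$, the factors $(1+\beta_k)^c$ and the radius losses are summable because $p+\beta_k=p\chi^k$ grows geometrically, and one arrives at $\sup_{B_r}V\le c\big(\intav_{B_{2r}}V^{p}\big)^{1/p}$, which is (\ref{Xu:bdd}). Letting $\epsilon\to0$ yields the statement for the original $f$ and for $\delta=0$.
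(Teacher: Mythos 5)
Your overall architecture (differentiate the equation in the horizontal and vertical directions, derive weighted Caccioppoli inequalities, run Moser iteration with the subelliptic Sobolev inequality) matches the paper's, and you correctly locate the difficulty: the source term involving $Tu$ in the Caccioppoli inequality for $\Xu$, which in earlier work forced $p<4$. But the proposal stops exactly at that point. You write that the new ingredient is ``to exploit that $w=Tu$ solves a homogeneous equation to extract a second, sharper Caccioppoli inequality\dots and then to choose the Young exponents so that the mixed terms of the two inequalities absorb one another,'' and you yourself call producing this inequality ``the main obstacle.'' That obstacle is the theorem's actual content, and the mechanism you sketch does not produce it: testing the homogeneous equation for $Tu$ with $\eta^2\deltaX^{\beta/2}Tu$ only yields Lemma~\ref{caccioppoli:T}, i.e.\ control of $\X Tu$ \emph{in terms of} $Tu$, and no choice of Young exponents between that estimate and the one for $\Xu$ closes the loop for all $p$, because the crude bound $|Tu|\le 2|\X\Xu|$ always costs a full power of $\deltaX$.

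What the paper actually does is different and is the key new idea you are missing: it goes back to the \emph{weak form of the differentiated equation for $X_lu$} (not the equation for $Tu$) and tests it with $\varphi=\eta^{\beta+2}|Tu|^{\beta}X_lu$. Combined with Lemma~\ref{caccioppoli:T}, this yields the reverse-type inequality of Lemma~\ref{caccioppoli:horizontal:T},
\begin{equation*}
\int_\Omega \eta^{\beta+2}\weight|Tu|^{\beta}|\X\X u|^2\,dx
\le c(\beta+1)^2\|\X\eta\|_{L^\infty}^2\int_\Omega \eta^{\beta}\deltaX^{\frac p2}|Tu|^{\beta-2}|\X\X u|^2\,dx,
\end{equation*}
which says that each factor $|Tu|^2$ in these weighted integrals can be traded for one factor of $\deltaX$. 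Iterating down in $\beta$ (Corollary~\ref{cor1}) and then interpolating by H\"older gives exactly the bound on $\int\eta^2\deltaX^{\frac{p-2+\beta}{2}}|Tu|^2\,dx$ needed to absorb the bad term in Lemma~\ref{caccioppoli:horizantal:sigma} for every $1<p<\infty$; this is Theorem~\ref{thm:Xu:cacci}, after which your Step~3 is correct. A secondary but real issue is your regularization: the paper deliberately avoids horizontal difference quotients (whose commutators with the $X_i$ reintroduce $Tu$ and the $p<4$ restriction) and instead justifies the formal computations by first proving the estimates for solutions that are a priori Lipschitz --- so that Capogna's results give $\X u, Tu\in HW^{1,2}_{\loc}$ --- and then removing that hypothesis via a Hilbert--Haar existence theorem on Kor\`anyi balls; your mollification-plus-difference-quotient reduction would need separate justification at exactly the step where the known arguments break down.
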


\begin{corollary}\label{cor:lip}
Let $1<p<\infty$, $\delta\ge 0$ and $u\in HW^{1,p}(\Omega)$ be a weak solution of equation (\ref{equation:main}). Then $u$ is locally Lipschitz
continuous in $\Omega$. Moreover, for any ball $B_{2r}\subset
\Omega$, we have that
\begin {equation}\label{Xu:lip}
\vert u(x)-u(y)\vert\le
c\Big(\intav_{B_{2r}}\deltaX^{\frac{ p}{ 2}}\, dz\Big)^{\frac{1}{p}} d(x,y)
\end{equation}
for all $x,y\in B_r$, where $c>0$ depends only on $n,p, L$.
\end{corollary}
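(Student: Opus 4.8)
The plan is to deduce the corollary from Theorem \ref{thm:lip} together with the classical principle that, in $\hh^n$, a function with locally bounded horizontal gradient is locally Lipschitz with respect to the Carnot--Carath\'eodory distance $d$; only the passage to the explicit constant in \eqref{Xu:lip} takes a little care. Theorem \ref{thm:lip} already supplies $\Xu\in L^\infty_\loc(\Omega;\R^{2n})$. The mechanism is integration of $\Xu$ along horizontal geodesics: for smooth $u$ and a horizontal curve $\gamma\colon[0,1]\to\Omega$ with $\gamma(0)=x$, $\gamma(1)=y$ and $\dot\gamma(t)=\sum_i a_i(t)X_i(\gamma(t))$, one has $\tfrac{d}{dt}u(\gamma(t))=\langle\Xu(\gamma(t)),a(t)\rangle$, hence
\[
|u(x)-u(y)|=\Big|\int_0^1\langle\Xu(\gamma(t)),a(t)\rangle\,dt\Big|\le\Big(\int_0^1|a(t)|\,dt\Big)\,\sup_\gamma|\Xu|.
\]
Because $(\hh^n,d)$ is a complete, locally compact length space, it is geodesic, so one may take $\gamma$ to be a unit horizontal-speed geodesic, for which $\int_0^1|a|\,dt=d(x,y)$ and every point of $\gamma$ is within $d(x,y)$ of both $x$ and $y$.

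To drop the smoothness of $u$, I would mollify in the group: one obtains smooth $u_\varepsilon\to u$ in $HW^{1,p}_\loc$ and almost everywhere, with $\|\X u_\varepsilon\|_{L^\infty(B')}\le\|\Xu\|_{L^\infty(B'')}$ whenever $\varepsilon$ is small and $B''$ slightly enlarges $B'$, since left-invariant vector fields interact well with the group convolution. Applying the displayed bound to $u_\varepsilon$ along a geodesic and letting $\varepsilon\to0$ (first at Lebesgue points of $u$, then everywhere, using the continuity that the bound itself forces) gives, for the precise representative of $u$,
\[
|u(x)-u(y)|\le d(x,y)\,\sup_{B}|\Xu|
\]
for any open ball $B$ containing the geodesic from $x$ to $y$. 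This is the heart of the matter; what remains is geometric bookkeeping to make the constant depend only on $n,p,L$.

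So fix $B_{2r}=B_{2r}(x_0)\subset\Omega$ and $x,y\in B_r$. A geodesic from any $z\in B_r$ to the centre $x_0$ stays in $B_r$, since $d(\gamma(t),x_0)\le d(z,x_0)<r$; hence by the previous step and Theorem \ref{thm:lip},
\[
|u(z)-u(x_0)|\le d(z,x_0)\,\sup_{B_r}|\Xu|\le r\,c\Big(\intav_{B_{2r}}\deltaX^{\frac p2}\,dx\Big)^{\frac1p}.
\]
If $d(x,y)\ge r/10$, applying this to $z=x$ and $z=y$ together with the triangle inequality already yields \eqref{Xu:lip}. If $d(x,y)<r/10$, set $\rho=\tfrac1{10}(2r-d(x,x_0))\ge r/10$; then $B_{2\rho}(x)\subset B_{2r}(x_0)$ and the $x$--$y$ geodesic lies in $B_\rho(x)$, so Theorem \ref{thm:lip} on $B_{2\rho}(x)$ gives $|u(x)-u(y)|\le d(x,y)\,c\big(\intav_{B_{2\rho}(x)}\deltaX^{p/2}\,dx\big)^{1/p}$, and since $|B_\varrho(z)|=\varrho^Q|B_1(0)|$ with $Q=2n+2$, the average over $B_{2\rho}(x)$ is at most $10^Q$ times that over $B_{2r}(x_0)$. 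Either way \eqref{Xu:lip} holds with $c=c(n,p,L)$, uniformly over $B_{2r}\subset\Omega$, and local Lipschitz continuity follows.

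The step I expect to be the real obstacle is the approximation argument of the second paragraph: one must legitimately integrate the horizontal gradient of a merely Sobolev function along horizontal curves and identify the outcome with a definite representative of $u$. This is exactly where the group structure of $\hh^n$ enters (translation-invariant mollifiers, the description of horizontal curves); the choice of curves and the volume counting are routine. Alternatively, one may bypass all of this by quoting the known identification of horizontal $W^{1,\infty}$ functions with the locally $d$-Lipschitz functions on Carnot groups, in quantitative form, and reading \eqref{Xu:lip} off Theorem \ref{thm:lip} directly.
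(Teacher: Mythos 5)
Your argument is correct and is precisely the standard deduction that the paper leaves implicit: Corollary \ref{cor:lip} is stated there without proof, resting on the well-known fact that a locally bounded horizontal gradient yields local Lipschitz continuity with respect to the Carnot--Carath\'eodory metric, combined with exactly the kind of geodesic and ball bookkeeping you carry out to turn \eqref{Xu:bdd} into \eqref{Xu:lip}. The one point to watch in your mollification step is the side of the group convolution: for left-invariant $X_i$ one has $X_i(\phi*u)=\phi*(X_iu)$ but not $X_i(u*\phi)=(X_iu)*\phi$, so the smoothing must be set up accordingly --- though, as you note, this is standard and can be bypassed entirely by quoting the identification of horizontal $W^{1,\infty}_{\loc}$ with locally $d$-Lipschitz functions on Carnot groups.
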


Here and in the following, the ball $B_r$ is defined with respect to the 
Carnot-Carath\`eodory metric (CC-metric) $d$; $B_{2r}$ is the double size ball with the same center, see Section \ref{section:preliminaries} for the
definitions.

Second, we show that the horizontal gradient of weak solutions of
equation (\ref{equation:main}) is H\"older continuous when $p\ge
2$.

\begin{theorem}\label{thm:holder}
Let  $2\le p<\infty$, $\delta\ge 0$ and $u\in HW^{1,p}(\Omega)$ be a weak
solution of equation (\ref{equation:main}). Then the horizontal
gradient $\X u$ is H\"older continuous. Moreover, there is a
positive exponent $\alpha=\alpha(n,p,L)\le 1$ such that for any
ball $B_{r_0}\subset \Omega$ and any $0<r\le r_0$, we have
\begin{equation}\label{Xu:holder} \max_{1\le l \le 2n} {\operatorname{osc}}_{B_r} X_l u
\le
c\Big(\frac{r}{r_0}\Big)^\alpha\Big(\intav_{B_{r_0}}\deltaX^{\frac{
p}{ 2}}\, dx\Big)^{\frac{1}{p}},\end{equation} where $c>0$ depends only on
$n,p, L$.
\end{theorem}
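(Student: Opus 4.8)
The plan is to deduce the oscillation estimate \eqref{Xu:holder} from a Caccioppoli inequality for the horizontal derivatives of $u$ together with a De Giorgi type iteration, taking the a priori Lipschitz bound of Theorem \ref{thm:lip} as the starting point. First I would pass to a smooth, strictly convex and non-degenerate approximating problem: replace $f$ by a sequence of integrands $f_\varepsilon\in C^\infty(\R^{2n})$ that satisfy \eqref{structure} with $\delta$ replaced by $\delta+\varepsilon>0$, uniformly in $\varepsilon>0$, so that the corresponding weak solutions $u_\varepsilon$ are smooth by the available regularity theory (once their horizontal gradient is bounded the differentiated equations below are uniformly elliptic for fixed $\varepsilon$, and the manipulations are legitimate). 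Theorem \ref{thm:lip} applied to $u_\varepsilon$ gives $\sup_{B_{r_0/2}}|\X u_\varepsilon|\le M$, where $M=c\big(\intav_{B_{r_0}}(\delta+\varepsilon+|\X u_\varepsilon|^2)^{p/2}\,dx\big)^{1/p}$; hence it suffices to prove \eqref{Xu:holder} for the $u_\varepsilon$ with constants independent of $\varepsilon$ and then let $\varepsilon\to 0$. From now on I write $u$ for $u_\varepsilon$.

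The second step is to set up the differentiated equations. Differentiating \eqref{equation:main} along a horizontal field $X_l$, $1\le l\le 2n$, and along the vertical field $T$, and using that in $\bH^n$ the only nonvanishing brackets among $X_1,\dots,X_{2n},T$ are $[X_i,X_{i+n}]=-[X_{i+n},X_i]=c_0T$ for a fixed constant $c_0\ne0$, while $[T,X_j]=0$, one finds that $v_l:=X_l u$ and $w:=Tu$ solve the coupled system, in weak divergence form,
\[
\sum_{i,j}X_i\big(D_{ij}f(\X u)\,X_j v_l\big)=\sum_{i,j}X_i\big(c^l_{ij}D_{ij}f(\X u)\,w\big)+\sum_{i,j}d^l_{ij}\,D_{ij}f(\X u)\,X_j w,\qquad \sum_{i,j}X_i\big(D_{ij}f(\X u)\,X_j w\big)=0,
\]
with universal constants $c^l_{ij},d^l_{ij}$, where by \eqref{structure} the matrix $D^2f(\X u)$ is comparable to $\deltaX^{\frac{p-2}{2}}\,\mathrm{Id}$ on the set $\{|\X u|\le M\}$. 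Thus $Tu$ satisfies the linearized equation with no commutator error, whereas the equation for $X_l u$ carries a term of first order in $Tu$; controlling this coupling is the main obstacle. I would handle it by testing the first equation with cut-off multiples of $v_l-\kappa$ and the second with cut-off multiples of $w$, and combining the resulting identities through Young's inequality with a small parameter, so that the crossed term $\int\deltaX^{\frac{p-2}{2}}|\X w|\,|\X v_l|\,\varphi^2\,dx$ is absorbed by a hole-filling argument. The outcome should be a closed family of Caccioppoli inequalities
\[
\int\deltaX^{\frac{p-2}{2}}\big(|\X v_l|^2+|\X w|^2\big)\varphi^2\,dx\ \le\ c\int\deltaX^{\frac{p-2}{2}}\big(|v_l-\kappa|^2+|w|^2\big)|\X\varphi|^2\,dx,
\]
valid for all $l$ and all $\kappa\in\R$, where one feeds in the Lipschitz bound $M$ and, as a preliminary input obtained from it, the weighted estimate $Tu\in HW^{1,2}_{\loc}$. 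The same scheme must then be carried out for the truncations $(v_l-k)_\pm$ and --- this is essential for the degenerate alternative below --- for the truncations of $M_\rho^2-|\X u|^2$, with $M_\rho:=\sup_{B_\rho}|\X u|$.

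With these Caccioppoli inequalities in hand, I would run the oscillation-decay iteration on $B_{r_0/2}$, arguing at each scale $\rho\le r_0/2$ by a dichotomy. In the \emph{non-degenerate} alternative $\inf_{B_{\rho/2}}|\X u|\ge\tfrac12 M_\rho$, the weight $\deltaX^{\frac{p-2}{2}}$ is comparable on $B_{\rho/2}$ to a fixed constant times $M_\rho^{p-2}$, the system for the $v_l$ is uniformly elliptic there with ellipticity ratio depending only on $n,p,L$ (the coupling term being lower order by the Caccioppoli bound for $Tu$), and the classical De Giorgi--Nash--Moser theory, carried out in the sub-Riemannian setting via the Poincar\'e and Sobolev inequalities for $\bH^n$ recalled in Section \ref{section:preliminaries}, gives $\osc_{B_{\sigma\rho}}X_l u\le\lambda\,\osc_{B_{\rho/2}}X_l u$ for some $\lambda=\lambda(n,p,L)<1$ and a fixed $\sigma=\sigma(n,p,L)\in(0,\tfrac12)$. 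In the \emph{degenerate} alternative $\inf_{B_{\rho/2}}|\X u|<\tfrac12 M_\rho$, the Caccioppoli inequality for the truncations of $M_\rho^2-|\X u|^2$ together with a measure-shrinking De Giorgi lemma shows that the sublevel sets of $|\X u|^2$ cannot be too small, whence $\sup_{B_{\sigma\rho}}|\X u|^2\le(1-\eta)M_\rho^2$ for some $\eta=\eta(n,p,L)>0$. To combine the alternatives I would use that the non-degenerate case is stable under passing to smaller concentric balls (if $\inf_{B_{\rho/2}}|\X u|\ge\tfrac12M_\rho$ then $\inf_{B_{\sigma\rho/2}}|\X u|\ge\inf_{B_{\rho/2}}|\X u|\ge\tfrac12M_\rho\ge\tfrac12M_{\sigma\rho}$), so that the dyadic scales split into an initial, possibly empty, run of degenerate steps along which $M_\rho$ decays geometrically, followed by a run of non-degenerate steps along which $\max_l\osc_{B_\rho}X_l u$ decays geometrically; since $M_\rho\le M_{r_0/2}$ throughout and $\osc_{B_\rho}X_l u\le 2M_\rho$, iterating and summing the geometric series gives $\max_l\osc_{B_r}X_l u\le c\,(r/r_0)^\alpha\,M_{r_0/2}$ with $\alpha=\alpha(n,p,L)\in(0,1]$. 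Estimate \eqref{Xu:holder} then follows from the bound on $M_{r_0/2}$ in Theorem \ref{thm:lip}, and letting $\varepsilon\to 0$ completes the proof.
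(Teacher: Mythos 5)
Your outer framework --- approximation by non-degenerate problems, a De Giorgi level-set iteration for the components $X_l u$, and a degenerate/non-degenerate dichotomy at each scale --- coincides with the paper's (which follows DiBenedetto). The genuine gap is in the step where you claim to ``close'' the Caccioppoli inequalities. Testing the equation for $v_l=X_lu$ with $\eta^2(v_l-k)^+$ produces, besides the term in which $Tu$ is paired against $\X\big(\eta^2(v_l-k)^+\big)$, the term $\int\eta^2\,T\big(D_{n+l}f(\X u)\big)(v_l-k)^+\,dx$, which is of size $\int\eta^2\weight|\X Tu|\,(v_l-k)^+\,dx$. Your proposed hole-filling absorption yields at best an inequality whose right-hand side contains $\int\weight|Tu|^2|\X\varphi|^2\,dx$ over the whole ball; this is \emph{not} of De Giorgi type: for lemmas of the type of Lemma \ref{DGvanish} and Lemma \ref{DGosc} to apply, the inhomogeneous term must be $\chi^2|A^{\pm}_{k,\rho}|^{1-2/q}$ with $q>Q$, i.e.\ it must shrink with the measure of the level set. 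A crude Cauchy--Schwarz on the $\X Tu$ term only produces the exponent $1/2<1-2/Q$, which is useless. Moreover, the input ``$Tu\in HW^{1,2}_{\loc}$'' is purely qualitative; the constants must depend only on $n,p,L$ and $\mu(r_0)$, not on the approximation parameter, and $Tu$ is a priori of the same order as $\X\X u$ (indeed $|Tu|\le 2|\X\X u|$), so declaring the coupling ``lower order'' begs the question.

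The paper closes this gap with two ingredients absent from your sketch. First, the quantitative higher integrability of $Tu$, namely $\int\eta^{\beta+2}\weight|Tu|^{\beta+2}\,dx\le c(\beta)K^{(\beta+2)/2}\int\deltaX^{(p+\beta)/2}\,dx$ for every $\beta\ge2$ (Corollary \ref{cor:Tu:high}), which rests on the reverse-type inequality of Lemma \ref{caccioppoli:horizontal:T} --- the paper's main lemma, expressing that $Tu$ is quantitatively \emph{better} than $\X\X u$ in an integral sense. This is what controls the $Tu$-term with the correct factor $|A^+_{k,r}|^{1-2/q}$. Second, for the $\X Tu$-term, the proof of Lemma \ref{lemma:cacci:k} runs an iteration on the quantities $a_m=\int\eta^2\weight v^2|Tu|^{2^m-2}|\X Tu|^2\,dx$, obtained by repeatedly testing the equation for $Tu$ with $\eta^2v^2|Tu|^\beta Tu$; this boosts the exponent on $|A^+_{k,r}|$ from $1/2$ to $2^m/(2^m+1)\ge 1-2/q$. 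Without these two devices the level-set iteration never starts, so the proposal as written does not prove the theorem.
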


This leaves open the H\"older continuity of horizontal gradient of weak solutions for equation
(\ref{equation:main}) in the case $1<p<2$.
Our approach does not work for this case. It seems that it requires new ideas to handle this case.

We comment on our proofs of Theorem \ref{thm:lip} and Theorem \ref{thm:holder}. We first prove the theorems for the case
$\delta>0$, and then for the case $\delta=0$ by an approximation argument. The crucial point is that
in the estimates (\ref{Xu:bdd}) and (\ref{Xu:holder}), the constants $c$ and the exponent $\alpha$ are independent of $\delta$. 
We prove Theorem \ref{thm:lip} in Section \ref{section:lip} and Theorem \ref{thm:holder} in Section \ref{section:Holder}, under the 
supplementary assumption that the solution $u$ is Lipschitz continuous. 
This additional assumption is removed 
by the Hilbert-Haar existence theory for functional (\ref{functional}) in Section \ref{section:approx}.
One good point to make this supplementary assmption is that we 
have enough regularity for the solution $u$ to carry out all of our proofs. Our proofs do not involve the difference quotient.

We use Moser's iteration to prove the Lipschitz continuity of $u$. The essential point is to prove a Caccioppoli type inequality
for $\Xu$ in Theorem \ref{thm:Xu:cacci}. It is an analogous version of that in the setting of Euclidean spaces. 
This is somehow surprising, since 
the vertical derivative $Tu$ is not involved. We should compare it with the usual version, Lemma \ref{caccioppoli:horizantal:sigma},
where $Tu$ is involved. The reason that we can remove the item involving $Tu$ is due to the reverse inequality for $Tu$, obtained
in Lemma \ref{caccioppoli:horizontal:T}. Roughly speaking, Lemma \ref{caccioppoli:horizontal:T} shows that
the vertical derivative $Tu$, comparing with $\X\X u$,  is somehow small. We have a good control on $Tu$. This opens a way to handle
the quasilinear elliptic equations and systems in the Heisenberg group. 




We use De Giorgi's method to prove the H\"older continuity of $\X u$, which is similar to the approach by DiBenedetto \cite{Di}. 
One crucial point is to obtain the Caccioppoli inequality in Lemma \ref{lemma:cacci:k}, which is based on the integrability of $Tu$ in Corollary
\ref{cor:Tu:high} and on the repeating applications of the equation for $Tu$.

The ideas in this paper can be also applied to study the regularity of minima of vectorial variational integrals of the following form
\[ I(u)=\int_\Omega g(\vert \X u\vert)\, dx,\]
where $u:\Omega\to {\mathbb R}^N, N>1,$ is a vector valued function and $g:[0,\infty)\to {\mathbb R}$ is of $p$-growth.

Finally, we remark that the regularity results for the functional (\ref{functional}) in this paper can be applied to study more general functionals like
\[ I(u)=\int_\Omega f(x,u,\X u)\, dx,\]
where $f: {\mathbb R}^{2n+1}\times {\mathbb R}\times {\mathbb R}^{2n}\to {\mathbb R}$ satisfies suitable growth and ellipticity conditions.
We refer to \cite{M} for this kind of treatments in the setting of Euclidean spaces.

\section{Preliminaries}\label{section:preliminaries}
In this section, we fix our notation and introduce the Heisenberg
group ${\mathbb H}^n$ and the sub-elliptic equations.

Throughout this paper, we denote by $c$ a positive constant, that
may vary from line to line. Except explicitly being specified, it
depends only on the dimension $n$ of the Heisenberg group that we
work with, and on the constants $p$ and $L$ in the structure
condition (\ref{structure}). But, it does not depend on $\delta$
in (\ref{structure}).

\subsection{Heisenberg group ${\mathbb H}^n$} We identify the Heisenberg group ${\mathbb H}^n$ with the Euclidean space ${\mathbb
R}^{2n+1}, n\ge 1$. The group multiplication is given by
\[ xy=(x_1+y_1, \dots, x_{2n}+y_{2n}, t+s+\frac{1}{2}
\sum_{i=1}^n (x_iy_{n+i}-x_{n+i}y_i))\] for points
$x=(x_1,\ldots,x_{2n},t), y=(y_1,\ldots,y_{2n},s)\in {\mathbb H}^
n$. The left invariant vector fields corresponding to the
canonical basis of the Lie algebra are
\[ X_i=\partial_{x_i}-\frac{x_{n+i}}{2}\partial_t, \quad
X_{n+i}=\partial_{x_{n+i}}+\frac{x_i}{2}\partial_t,\] and the only
non-trivial commutator
\[ T=\partial_t=[X_i,X_{n+i}]=X_iX_{n+i}-X_{n+i}X_i\]
for $1\le i\le n$. We call $X_1, X_2,\ldots, X_{2n}$ horizontal
vector fields and $T$ the vertical vector field. We denote by
$\X=(X_1, X_2,\ldots,X_{2n})$ the horizontal gradient.
The second horizontal derivatives are given by the horizontal Hessian
$\X \X u$ of a function $u$, with entries $X_i(X_j u), i,j=1,\ldots, 2n$. Note that it is not symmetric, in general.
The standard Euclidean gradient of a function $v$ in ${\mathbb R}^k$ is denoted by
$Dv=(D_1v,\ldots, D_kv)$ and the Hessian matrix by $D^2v$.

The Haar measure in ${\mathbb H}^n$ is the Lebesgue measure of
${\mathbb R}^{2n+1}$. We denote by $\vert E\vert$ the Lebesgue
measure of a measurable set $E\subset {\mathbb H}^n$ and by
\[ \intav_E f\, dx=\frac{1}{\vert E\vert}\int_E f\, dx\]
the average of an integrable function $f$ over set $E$.

In this paper, all of the balls $B_\rho(x)=\{ y\in {\mathbb H}^n: d(y,x)<\rho\}$
are defined with respect to the Carnot-Carath\`eodory metric (CC-metric) $d$.
The CC-distance of two points in ${\mathbb H}^n$ is the length of the shortest
horizontal curve joining them.
The CC-metric is equivalent to the Kor\`anyi metric
\[ d_{{\mathbb H}^n}(y,x)= \norm x^{-1}y\norm_{{\mathbb H}^n}\]
by the Kor\`anyi gauge for $x=(x_1,\ldots,x_{2n}, t)$
\[ \norm x\norm_{{\mathbb H}^n}^2=\sum_{i=1}^{2n} x_i^2+\vert t\vert.\]
Since these two metrics are equivalent, we may state our theorems in Section \ref{section:intro}
by the Kor\`anyi balls $K_\rho(x)=\{ y\in {\mathbb H}^n: d_{{\mathbb H}^n}(y, x)<\rho\}$.

The horizontal Sobolev space $HW^{1,p}(\Omega), 1\le p<\infty, \Omega\subset {\mathbb H}^n,$ consists
of functions $u\in L^p(\Omega)$ such that the horizontal distribution gradient $\X u$ is also in $L^p(\Omega)$.
$HW^{1,p}(\Omega)$ is a Banach space with respect to the norm
\[ \norm u\norm_{HW^{1,p}(\Omega)}=\norm u\norm_{L^p(\Omega)}+\norm \X u\norm_{L^p(\Omega)}.\]
$HW^{1,p}_0(\Omega)$ is the closure of $C^\infty_0(\Omega)$ in $HW^{1,p}(\Omega)$ with this norm.
In an obivous way, we define the local space $HW^{1,p}_{\loc}(\Omega)$.
The following Sobolev imbedding theorem is important for the Moser iteration.

\begin{theorem}\label{thm:sobolev}
Let $1\le q< Q=2n+2$. 
For all $u \in HW^{1,q}_0(B_r)$, $B_r\subset {\mathbb H}^n$, we have
\begin{equation}\label{sobolev}
\Big(\intav_{B_r}\vert u\vert^{\frac{Qq}{Q-q}}\, dx\Big)^{\frac{Q-q}{Qq}}\le c r\Big(\intav_{B_r}\vert \X u\vert^q\, dx\Big)^{\frac 1 q},
\end{equation}
where $c=c(n,q)>0$.
\end{theorem}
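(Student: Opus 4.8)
The plan is to reduce the statement to the classical Sobolev inequality on $\mathbb H^n$ and then track the scaling carefully to obtain the precise form with the factor $r$ on the right-hand side. First I would recall that on a Carnot group of homogeneous dimension $Q$, the Sobolev–Gagliardo–Nirenberg inequality holds: for $1\le q<Q$ and any $v\in C_0^\infty(\mathbb H^n)$,
\[
\Big(\int_{\mathbb H^n}|v|^{\frac{Qq}{Q-q}}\,dx\Big)^{\frac{Q-q}{Qq}}\le c(n,q)\Big(\int_{\mathbb H^n}|\X v|^q\,dx\Big)^{\frac1q},
\]
and by density this extends to all $v\in HW^{1,q}_0(B_r)$, extended by zero to $\mathbb H^n$. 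This inequality is standard and follows, e.g., from the representation formula via the fundamental solution of the sub-Laplacian combined with fractional integration, or from the $q=1$ case (which in turn follows from the isoperimetric inequality on $\mathbb H^n$) together with the usual truncation argument of Gagliardo–Nirenberg applied to powers $|v|^{\gamma}$. I would cite the monograph \cite{BLU} for this; since the theorem is quoted as a known tool rather than proved from scratch, the main content of the proof is the normalization.

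Next I would pass to averages and insert the correct power of $r$. The key is the dilation structure of $\mathbb H^n$: the anisotropic dilations $\delta_\lambda(x_1,\dots,x_{2n},t)=(\lambda x_1,\dots,\lambda x_{2n},\lambda^2 t)$ satisfy $|\delta_\lambda E|=\lambda^Q|E|$, the CC-metric is $1$-homogeneous under $\delta_\lambda$ (so $\delta_\lambda$ maps $B_1$ to $B_\lambda$), and the horizontal vector fields are $1$-homogeneous, i.e. $X_i(v\circ\delta_\lambda)=\lambda (X_i v)\circ\delta_\lambda$. Applying the unnormalized inequality to $v$ and then changing variables to rescale $B_r$ to $B_1$, one divides both sides by appropriate powers of $|B_r|=c(n)r^Q$: the left side acquires $|B_r|^{-\frac{Q-q}{Qq}}$ and the right side $|B_r|^{-\frac1q}$, and the ratio of these two normalizing factors is exactly $|B_r|^{\frac1Q}\sim c(n)\,r$, which produces the claimed factor $r$. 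Carrying out this bookkeeping gives \eqref{sobolev} with $c=c(n,q)$.

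The only real obstacle is being careful that the exponents match: one must check that $\frac{1}{q}-\frac{Q-q}{Qq}=\frac1Q$, which indeed holds, so the net power of $|B_r|$ that survives is $|B_r|^{1/Q}$, hence comparable to $r$ since $|B_r|$ is comparable to $r^Q$. There is no analytic difficulty beyond invoking the known global Sobolev inequality; the statement is essentially its scale-invariant reformulation on balls, tailored for later use in Moser iteration, where only the structure of the exponents and the explicit dependence $c=c(n,q)$ (independent of $r$ and of $\delta$) matter.
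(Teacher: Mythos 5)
Your proposal is correct. The paper states Theorem \ref{thm:sobolev} as a known background tool and offers no proof of its own, so there is nothing to compare against; your route --- the global Sobolev--Gagliardo--Nirenberg inequality on a Carnot group of homogeneous dimension $Q$ (via the fundamental solution and fractional integration, or via the $q=1$/isoperimetric case and truncation), followed by extension by zero and the dilation bookkeeping $\frac1q-\frac{Q-q}{Qq}=\frac1Q$ with $\vert B_r\vert=c(n)r^Q$ --- is the standard justification and your exponent check is right.
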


\subsection{Sub-elliptic equations}
Suppose that the integrand function $f$ of the functional (\ref{functional}) satisfies 
(\ref{structure}). It is easy to prove that (\ref{structure}) implies 
the strong monotonicity 
\begin{equation}\label{mono}
\langle Df(z)-Df(w), z-w\rangle\ge \frac{1}{\overline L} \big(\delta+\vert z\vert^2+\vert w\vert^2\big)^{\frac{p-2}{2}}\vert z-w\vert^2,
\end{equation}
and therefore the ellipticity condition
\begin{equation}\label{stru}
\langle Df(z),z\rangle\ge \frac{1}{\overline L}\big(\delta+\vert z\vert^2\big)^{\frac{p-2}{2}}\vert z\vert^2-{\overline L}\delta^{\frac{p}{2}},
\end{equation}
for all $z,w\in {\mathbb R}^{2n}$. Here ${\overline L}>0$ is a constant, depending only on $p$ and $L$.

We say a function $u\in HW^{1,p}(\Omega)$ is a weak solution of equation (\ref{equation:main}) if 
\[\int_\Omega \langle Df(\X u), \X\varphi\rangle\, dx=0\]
for all $\varphi\in C_0^\infty(\Omega)$. It is easy to prove that
$u$ is a weak solution of equation (\ref{equation:main}) if and only if it is 
a local minimizer of functional (\ref{functional}), that is,
\[ \int_\Omega f(\Xu)\, dx \le \int_\Omega f(\X u+\X \varphi)\, dx\]
for all $\varphi\in C_0^\infty(\Omega)$. By the strong monotonicity (\ref{mono}), it is easy to prove that
the solution to the following Dirichlet problem is unique
\begin{equation*}
\begin{cases} 
&\divo \big(Df(\X u)\big)=0 \quad \text{ in }\Omega;\\
&u-\phi\in HW^{1,p}_0(\Omega),
\end{cases}
\end{equation*}
where $\phi\in HW^{1,p}(\Omega)$ is given. It is also easy to prove the following comparison principle:
let $u, v\in HW^{1,p}(\Omega)$ be weak solutions of equation (\ref{equation:main}). If
$u\ge v$ on $\partial \Omega$ in the sense of Sobolev, then we have $u\ge v$ a.e. in $\Omega$.

\section{Lipschitz continuity \label{section:lip}}
In this section, we prove Theorem \ref{thm:lip}  for the case $\delta>0$ under the
additional assumption that the weak solution is Lipschitz
continuous. This section has three subsections. We prove several
Caccioppoli type inequalities for the horizontal gradient
and the vertical derivative in the first subsection. The
second subsection contains the main lemma, from which Theorem
\ref{thm:Xu:cacci} follows. The proofs of Theorem
\ref{thm:Xu:cacci} and Theorem \ref{thm:lip} are given in the last
subsection.

Throughout this section, $u\in HW^{1,p}(\Omega)$ is a weak
solution of equation (\ref{equation:main}) satisfying the structure condition (\ref{structure}) with $\delta>0$. We make the following
supplementary assumption: $\X u$ is bounded in $\Omega$, that is,
\begin{equation}\label{supplementary} \norm \X u\norm_{L^\infty(\Omega)}\le M\end{equation}
for a constant $M>0$. We remark here that in Section
\ref{section:approx} we will remove this assumption. Under this additional assumption, it follows
from (\ref{structure}) that $f$ satisfies
\begin{equation}\label{structureprime}
\begin{aligned}
\frac{1}{\nu}\vert \xi\vert^2&\le \langle D^2f(\X u)\xi,\xi\rangle\le
\nu\vert \xi\vert^2;\\
&\vert Df(\X u)\vert\le \nu(1+\vert \X u\vert)
\end{aligned}
\end{equation}
for all $\xi\in {\mathbb R}^{2n}$, where $\nu>0$ is a constant,
depending on $p, L,\delta, M$.

Now, we can apply Capogna's results in \cite{Ca1}. Theorem 1.1 and Theorem 3.1 of \cite{Ca1} show that $\X u$ and $Tu$  are
H\"older continuous in $\Omega$, and that
\begin{equation}\label{apregularity} \X u\in
HW^{1,2}_{\loc}(\Omega;{\mathbb R}^{2n}), \quad Tu\in
HW^{1,2}_{\loc}(\Omega)\cap L^\infty_{\loc}(\Omega).\end{equation}
The above regularity is enough for us to carry out all of the proofs
in this section. We should keep (\ref{apregularity}) in the mind.
We remark here that the constants $M$ and $\nu$ do not enter all of the estimates in this section. Because of this fact, we
are able to remove the supplementary assumption
(\ref{supplementary}) in Section \ref{section:approx}.

\subsection{Caccioppoli type inequalities\label{section:cacci}}

The following two lemmas are straight forward; the proofs are
easy. For the sake of completeness, we give the proofs here.
\begin{lemma}\label{ws:horizontal}
Let $v_l=X_l u, l=1,2,\ldots,n$. Then $v_l$ is a weak solution of
\begin{equation}\label{equation:horizontal}
\sum_{i,j=1}^{2n}X_i\big(D_{j}D_if(\X u)
X_jv_l\big)+\sum_{i=1}^{2n}X_i\big(D_{n+l}D_i f(\X u)Tu\big)+T\big(D_{n+l}f(\X
u)\big)=0;
\end{equation}
Let $v_{n+l}=X_{n+l}u, l=1,2,\ldots,n$. Then $v_{n+l}$ is a weak
solution of
\begin{equation}\label{equation:horizontal2}
\sum_{i,j=1}^{2n}X_i\big(D_{j}D_if(\X u)
X_jv_{n+l}\big)-\sum_{i=1}^{2n}X_i\big(D_{l}D_i f(\X u)Tu\big)-T\big(D_{l}f(\X
u)\big)=0;
\end{equation}
\end{lemma}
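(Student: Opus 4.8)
The plan is to differentiate the weak formulation of equation (\ref{equation:main}) with respect to the horizontal vector fields $X_l$ and $X_{n+l}$, handling the non-commutativity of the vector fields carefully. Start from the definition of weak solution: for all $\psi\in C_0^\infty(\Omega)$,
\[
\int_\Omega \sum_{i=1}^{2n} D_i f(\X u)\, X_i\psi\, dx = 0.
\]
Given a test function $\varphi\in C_0^\infty(\Omega)$, I would substitute $\psi = X_l\varphi$ (for $v_l$) or $\psi = X_{n+l}\varphi$ (for $v_{n+l}$) into this identity — this is legitimate since $X_l\varphi\in C_0^\infty(\Omega)$ — and then integrate by parts to move the extra $X_l$ (respectively $X_{n+l}$) off of $\varphi$ and onto $D_i f(\X u)$. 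The key algebraic input is the commutation relation: since $[X_i, X_l]$ is either $0$ or $\pm T$ depending on whether the indices are ``conjugate,'' we have $X_i(X_l\varphi) = X_l(X_i\varphi) + [X_i,X_l]\varphi$, and the commutator term is exactly what produces the vertical derivative contributions in (\ref{equation:horizontal}) and (\ref{equation:horizontal2}).

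The concrete steps: first, rewrite $\int_\Omega \sum_i D_if(\X u) X_i(X_l\varphi)\, dx = 0$. Using the commutator relation, $X_i(X_l\varphi) = X_l(X_i\varphi)$ unless $\{i,l\}$ is a conjugate pair, in which case an extra $\pm T\varphi$ appears; specifically $X_{n+l}(X_l\varphi) = X_l(X_{n+l}\varphi) - T\varphi$ and $X_l(X_{n+l}\varphi)=X_{n+l}(X_l\varphi)+T\varphi$ by the stated relation $T=[X_l,X_{n+l}]$. So the single term with $i=n+l$ generates $-\int_\Omega D_{n+l}f(\X u)\, T\varphi\, dx$. Next, for the main sum $\sum_i \int_\Omega D_i f(\X u) X_l(X_i\varphi)\, dx$, integrate by parts in $X_l$: since $X_l$ is (formally) skew-adjoint (the horizontal vector fields have divergence zero), this equals $-\sum_i\int_\Omega X_l\big(D_if(\X u)\big) X_i\varphi\, dx$. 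Now $X_l\big(D_if(\X u)\big) = \sum_{j} D_jD_if(\X u)\, X_l(X_ju)$, and again I replace $X_l(X_ju)$ by $X_j(X_lu) = X_j v_l$ plus a commutator term $[X_l,X_j]u$, which is $\pm Tu$ precisely when $j$ and $l$ are conjugate, i.e. $j=n+l$. Collecting: the $j=n+l$ term contributes $\sum_i\int_\Omega D_{n+l}D_if(\X u)\, Tu\, X_i\varphi\, dx$ with the appropriate sign, which after moving $X_i$ back by one more integration by parts becomes the middle term $\sum_i X_i\big(D_{n+l}D_if(\X u) Tu\big)$ in (\ref{equation:horizontal}). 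Assembling all three pieces — the $D_jD_if(\X u)X_j v_l$ divergence term, the $Tu$ term, and the $T\big(D_{n+l}f(\X u)\big)$ term from the first commutator — gives exactly equation (\ref{equation:horizontal}). The computation for $v_{n+l}$ is identical except that all commutators $[X_{n+l},X_l]=-T$ flip sign, producing the minus signs in (\ref{equation:horizontal2}).

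The main subtlety, and the place to be careful, is the bookkeeping of signs and of which index pairs are ``conjugate'': every occurrence of $X_l(X_j \cdot)$ versus $X_j(X_l\cdot)$ must be tracked, and one must use $T=[X_i,X_{n+i}]=X_iX_{n+i}-X_{n+i}X_i$ consistently (note $[X_{n+i},X_i]=-T$). A second point requiring a word of justification is that these manipulations — differentiating the equation and integrating by parts twice — are valid as written; this is guaranteed by the a priori regularity (\ref{apregularity}), namely $\X u\in HW^{1,2}_{\loc}$ and $Tu\in HW^{1,2}_{\loc}\cap L^\infty_{\loc}$, together with $D^2f, Df$ being continuous and bounded on the (bounded) range of $\X u$, so that all the integrands $D_jD_if(\X u)X_jv_l$, $D_{n+l}D_if(\X u)Tu$, and $D_{n+l}f(\X u)$ lie in $L^2_{\loc}$ and the weak derivatives exist. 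I expect no genuine obstacle here — the lemma is indeed ``straightforward'' as the authors say — the only risk is a sign error, so I would double-check by specializing to $n=1$ where there is a single conjugate pair $(X_1,X_2)$ and $T=[X_1,X_2]$.
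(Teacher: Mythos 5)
Your proposal is correct and follows essentially the same route as the paper: test the weak formulation with $X_l\varphi$, use the commutation relation $X_{n+l}X_l=X_lX_{n+l}-T$ to produce the $T\varphi$ term, integrate by parts to put $X_l$ on $D_if(\X u)$, and apply the chain rule with one more commutator $X_lX_{n+l}u=X_{n+l}X_lu+Tu$ to produce the $Tu$ term; the signs you track agree with the paper's. The regularity remark via (\ref{apregularity}) is consistent with how the paper justifies these manipulations.
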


\begin{proof}
We only prove (\ref{equation:horizontal}). The proof of
(\ref{equation:horizontal2}) is similar. Let $\varphi\in
C^\infty_0(\Omega)$ and fix $l\in \{1,2,\ldots,n\}$. We use
$X_l\varphi$ as a test-function in (\ref{equation:main}) and
obtain that
\[
\int_\Omega \sum_{i=1}^{2n}D_i f(\Xu) X_iX_l\varphi\, dx=0.
\]
Note that $X_lX_i-X_iX_l=0$ if $i\neq n+l$ and that $X_lX_{n+l}-X_{n+l}X_l=T$. Then integration by parts
yields
\begin{equation}\label{weak1}
\begin{aligned}
0=&\int_\Omega\sum_{i=1}^{2n} D_if(\Xu) X_lX_i\varphi\, dx-\int_\Omega D_{n+l}f(\Xu)T\varphi\, dx\\
=&-\int_\Omega\sum_{i=1}^{2n} X_l(D_i f(\Xu))X_i\varphi\, dx+\int_\Omega T(D_{n+l}f(\Xu))\varphi\, dx,\\
\end{aligned}
\end{equation}
which, together with
\[
\sum_{i=1}^{2n} X_l(D_i f(\Xu))
=\sum_{i,j=1}^{2n} D_jD_i f(\Xu)X_jX_l u
+ \sum_{i=1}^{2n} D_{n+l}D_if(\Xu)Tu,
\]
proves the lemma.
\end{proof}

\begin{lemma}\label{ws:T}
$Tu$ is a weak solution of
\begin{equation}\label{equation:T}
\sum_{i,j=1}^{2n} X_i\big(D_jD_i f(\Xu)X_j (Tu)\big)=0.
\end{equation}
\end{lemma}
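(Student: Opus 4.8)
The plan is to derive equation \eqref{equation:T} by differentiating the weak form of \eqref{equation:main} along the vertical vector field $T$, exactly as in the proof of Lemma \ref{ws:horizontal}, but now exploiting that $T$ commutes with every horizontal field $X_i$. First I would take an arbitrary test function $\varphi\in C_0^\infty(\Omega)$ and insert $T\varphi=\partial_t\varphi$ (which again lies in $C_0^\infty(\Omega)$) into the weak formulation, obtaining
\[
\int_\Omega \sum_{i=1}^{2n} D_if(\Xu)\, X_i T\varphi\, dx = 0.
\]
Since $[T,X_i]=0$ for all $i=1,\dots,2n$, we have $X_iT\varphi=TX_i\varphi$, so this equals $\int_\Omega \sum_i D_if(\Xu)\, T(X_i\varphi)\, dx$. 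Now I integrate by parts in $t$ (equivalently, use that $T$ is skew-adjoint on $C_0^\infty$, with no commutator terms since $T=\partial_t$), moving $T$ onto the coefficient:
\[
0 = -\int_\Omega \sum_{i=1}^{2n} T\big(D_if(\Xu)\big)\, X_i\varphi\, dx.
\]

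Next I would expand $T(D_if(\Xu))$ by the chain rule. Because $f\in C^2$ and, under the standing assumption \eqref{apregularity}, $\X u\in HW^{1,2}_{\loc}$ with $Tu\in HW^{1,2}_{\loc}\cap L^\infty_{\loc}$, the composition is differentiable along $T$ and
\[
T\big(D_if(\Xu)\big)=\sum_{j=1}^{2n} D_jD_if(\Xu)\, T(X_ju)=\sum_{j=1}^{2n} D_jD_if(\Xu)\, X_j(Tu),
\]
where in the last step I again used $TX_ju=X_j(Tu)$. Substituting this back gives
\[
\int_\Omega \sum_{i,j=1}^{2n} D_jD_if(\Xu)\, X_j(Tu)\, X_i\varphi\, dx = 0
\]
for all $\varphi\in C_0^\infty(\Omega)$, which is precisely the statement that $Tu$ is a weak solution of \eqref{equation:T}.

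The main point to be careful about — the only real obstacle — is justifying the manipulations rigorously given that $u$ is only a weak solution: the interchange $X_iT\varphi=TX_i\varphi$ is trivial on smooth test functions, but the chain rule for $T(D_if(\Xu))$ and the integration by parts that transfers $T$ onto $D_if(\Xu)$ require that $D_if(\Xu)$ be weakly differentiable along $T$ with the claimed derivative. This is exactly what the a priori regularity \eqref{apregularity} supplies: $\X u$ has locally square-integrable horizontal derivatives and $Tu$ is locally bounded and in $HW^{1,2}_{\loc}$, so $D_jD_if(\Xu)$ is bounded and $X_j(Tu)\in L^2_{\loc}$, making the product integrable and the chain rule valid by a standard approximation. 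One subtlety worth a remark is that $\X\X u$ is not symmetric, so one should not silently replace $X_j(Tu)$ by $T(X_ju)$ without invoking the commutation $[T,X_j]=0$ explicitly; but since that commutator genuinely vanishes, no correction terms of the type appearing in \eqref{equation:horizontal} arise here, which is why \eqref{equation:T} is in pure divergence form with no lower-order terms. Everything else is a routine density argument identical in spirit to the proof of Lemma \ref{ws:horizontal}.
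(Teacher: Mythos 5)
Your proposal is correct and is essentially the paper's own proof run in the opposite direction: the paper starts from the weak form of \eqref{equation:T}, applies the chain rule to recognize $\sum_j D_jD_if(\Xu)X_j(Tu)=T\big(D_if(\Xu)\big)$, integrates by parts to move $T$ onto $X_i\varphi$, and uses $T\varphi$ as a test function in \eqref{equation:main}, whereas you begin with the test function $T\varphi$ and work forward. The ingredients — commutation $[T,X_i]=0$, integration by parts in $T$, the chain rule justified by \eqref{apregularity} — are identical, so no further comment is needed.
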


\begin{proof}
Let $\varphi\in C^\infty_0(\Omega)$. Then
\begin{equation*}
\begin{aligned}
\int_\Omega \sum_{i,j=1}^{2n}D_jD_if(\X u)X_j(Tu)X_i\varphi\, dx&=
\int_\Omega \sum_{i=1}^{2n}T\big(D_i f(\Xu)\big)X_i\varphi\, dx\\
&=-\int_\Omega \sum_{i=1}^{2n}D_if(Xu)X_i(T\varphi)\, dx=0,
\end{aligned}
\end{equation*}
which proves the lemma. The second equality follows from
integration by parts, and the third from the fact that $u$ is a
weak solution of equation (\ref{equation:main}).
\end{proof}

We need the following Caccioppoli inequality for $Tu$. It was
proved in \cite{MM} by the difference quotient. Since we have the
regularity assumption (\ref{apregularity}), we can prove it
directly without using the difference quotient. The proof is standard and easy.
We provide a proof in the Appendix for the readers'
convenience.
\begin{lemma}\label{caccioppoli:T}
For any $\beta\ge 0$ and  all $\eta\in C^\infty_0(\Omega)$, we
have
\begin{equation*}
\int_\Omega \eta^2\weight \vert Tu\vert^\beta\vert\X Tu\vert^2\,
dx \le \frac{c}{(\beta+1)^2}\int_\Omega \vert\X\eta
\vert^2\weight\vert Tu\vert^{\beta+2}\, dx.
\end{equation*}
where $c=c(n,p,L)>0$.
\end{lemma}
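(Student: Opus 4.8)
The plan is to derive the Caccioppoli inequality for $Tu$ by testing the equation for $Tu$ from Lemma \ref{ws:T} with a suitable test-function built from $Tu$ itself. Writing $w=Tu$, which by \trif{apregularity} lies in $HW^{1,2}_{\loc}(\Omega)\cap L^\infty_{\loc}(\Omega)$, the natural choice is $\varphi=\eta^2|w|^\beta w$ for $\beta\ge 0$; note $\varphi$ is an admissible test-function thanks to the regularity in \trif{apregularity} and the fact that $x\mapsto |x|^\beta x$ is $C^1$ with derivative $(\beta+1)|x|^\beta$. Since $X_j\varphi=2\eta X_j\eta\,|w|^\beta w+(\beta+1)\eta^2|w|^\beta X_j w$, inserting this into $\int_\Omega\sum_{i,j}D_jD_if(\Xu)X_j w\,X_i\varphi\,dx=0$ splits the identity into a "good" term
\[
(\beta+1)\int_\Omega \eta^2|w|^\beta \sum_{i,j}D_jD_if(\Xu)\,X_jw\,X_iw\,dx
\]
and a "bad" term
\[
2\int_\Omega \eta|w|^\beta w\sum_{i,j}D_jD_if(\Xu)\,X_jw\,X_i\eta\,dx .
\]

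Next I would estimate the two terms using the ellipticity bounds \trif{structure} on $D^2f(\Xu)$, i.e. that $\langle D^2f(\Xu)\xi,\xi\rangle$ is comparable to $\weight|\xi|^2$ (with constants depending only on $p,L$ — this is exactly where one uses the full structure condition rather than the $\delta$-dependent \trif{structureprime}, so that the final constant $c$ does not depend on $\delta$). The good term is bounded below by $\tfrac{1}{c}(\beta+1)\int_\Omega \eta^2\weight|w|^\beta|\Xu w|^2\,dx$. For the bad term, I would apply Cauchy–Schwarz with the matrix $D^2f(\Xu)$ (using that it is symmetric positive definite) to get
\[
\Big|2\int_\Omega \eta|w|^\beta w\,\langle D^2f(\Xu)\Xu w,\X\eta\rangle\,dx\Big|
\le c\int_\Omega \eta|w|^{\beta}|w|\,\weight|\Xu w|\,|\X\eta|\,dx,
\]
and then Young's inequality in the form $ab\le \tfrac{\beta+1}{2c'}a^2+\tfrac{c'}{2(\beta+1)}b^2$, absorbing the term containing $\eta^2\weight|w|^\beta|\Xu w|^2$ into the left-hand side. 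This leaves the remainder $\tfrac{c}{(\beta+1)}\int_\Omega |\X\eta|^2\weight|w|^{\beta+2}\,dx$, and dividing through by $(\beta+1)$ (which is harmless since $\beta+1\ge 1$, but keeping it produces the claimed $1/(\beta+1)^2$ factor) yields the assertion.

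The only genuine subtlety is the admissibility of the test-function and the justification of the product/chain rules for $\eta^2|Tu|^\beta Tu$: since $Tu$ is only known to be in $HW^{1,2}_{\loc}\cap L^\infty_{\loc}$, one should either note that $|Tu|^\beta$ is bounded on $\supp\eta$ and $s\mapsto |s|^\beta s$ is Lipschitz on bounded sets (so the composition stays in $HW^{1,2}$ with the expected gradient), or run the argument first with $|Tu|$ replaced by $(\epsilon+|Tu|^2)^{1/2}$ or a truncation of $Tu$ and pass to the limit. Given \trif{apregularity} this is routine, which is why the authors defer it to the Appendix; the heart of the matter is the testing-and-absorption scheme just described. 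No compactness or iteration is needed — it is a one-shot energy estimate, the standard Caccioppoli computation adapted to the sub-elliptic setting, with the key point being only that the ellipticity constants in \trif{structure} are $\delta$-free so the resulting $c$ depends solely on $n,p,L$.
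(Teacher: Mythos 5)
Your proposal is correct and follows essentially the same route as the paper's Appendix proof: test equation \trif{equation:T} with $\varphi=\eta^2|Tu|^\beta Tu$, use the structure condition \trif{structure} to bound the diagonal term from below and the cross term from above, and absorb via Cauchy--Schwarz/Young, with the two factors of $(\beta+1)^{-1}$ combining to give the stated $(\beta+1)^{-2}$. Your extra remarks on the admissibility of the test function, justified by \trif{apregularity}, are consistent with what the paper implicitly relies on.
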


We also need the following Caccioppoli type inequality for $\X u$. Comparing with the one
for $Tu$ in Lemma \ref{caccioppoli:T}, it is much more delicate, mainly due to the non-commutativity of the 
horizontal vector fields $X_i$. When $2\le p<4$, it was proved in Lemma 5.1 of \cite{MZZ}, based on the earlier result in \cite{MM}. 
The proof relies on Domokos' result on the integrability on $Tu$: $Tu\in L^p_{\loc}(\Omega)$.   
The proofs there involve the difference quotient. Again, since we have the
regularity assumption (\ref{apregularity}), we can prove it
directly without using difference quotient. We provide a proof in the Appendix.
\begin{lemma}\label{caccioppoli:horizantal:sigma}
For any $\beta\ge 0$ and all $\eta\in C^\infty_0(\Omega)$, we have
\begin{equation*}
\begin{aligned}
\int_{\Omega}\eta^2\weights\vert\X\X u\vert^2\, dx\le&
c\int_\Omega (\vert \X \eta\vert^2+\vert \eta\vert\vert T\eta\vert)\deltaX^{\frac{p+\beta}{2}}\, dx\\
&+c(\beta+1)^4\int_\Omega\eta^2\weights\vert Tu\vert^2\, dx,
\end{aligned}
\end{equation*}
where $c=c(n,p,L)>0$.
\end{lemma}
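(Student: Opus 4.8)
The goal is to prove the Caccioppoli inequality in Lemma \ref{caccioppoli:horizantal:sigma}: control the weighted $L^2$-norm of the full horizontal Hessian $\X\X u$ by lower-order terms, namely a term in $\deltaX^{(p+\beta)/2}$ with derivatives falling on the cutoff and a term involving $Tu$.

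The natural strategy is to differentiate the equation and test. Fix $l\in\{1,\dots,n\}$ and use Lemma \ref{ws:horizontal}: $v_l=X_l u$ and $v_{n+l}=X_{n+l}u$ satisfy equations \eqref{equation:horizontal}–\eqref{equation:horizontal2}. For each such $v=v_k$, $k=1,\dots,2n$, I would test the weak formulation with $\varphi=\eta^2\weights v$ (this is admissible by the regularity \eqref{apregularity}, so no difference quotients are needed). The leading term produces, after the usual Cauchy–Schwarz/Young manipulation and using the ellipticity $\langle D^2f(\X u)\xi,\xi\rangle\ge\weight|\xi|^2$ from \eqref{structure},
\begin{equation*}
\int_\Omega \eta^2\weights|\X v_k|^2\,dx \;\lesssim\; \int_\Omega |\X\eta|^2\weights|v_k|^2\,dx + (\text{error terms}),
\end{equation*}
where the weight exponent $\beta$ enters powers of $(\beta+1)$ from differentiating $\weights$. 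Summing over $k=1,\dots,2n$ and noting $\sum_k|v_k|^2=|\Xu|^2\le\deltaX$ gives the first term on the right-hand side of the claimed inequality. The error terms coming from the first-order-in-$Tu$ pieces $\sum_iX_i(D_{n+l}D_if(\X u)\,Tu)$ (and its analogue) are handled by Young's inequality, reabsorbing a small multiple of $\int\eta^2\weights|\X\X u|^2$ into the left and leaving a controlled multiple of $\int\eta^2\weights|Tu|^2$ — this accounts for the $(\beta+1)^4$ factor and the second term.

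The genuinely delicate point — and the reason the lemma is ``much more delicate'' than Lemma \ref{caccioppoli:T} — is the term $T\big(D_{n+l}f(\X u)\big)$ (resp. $-T(D_lf(\X u))$) in the differentiated equation. Tested against $\eta^2\weights v_k$ and integrated by parts to move $T$, this produces $\int D_{n+l}f(\X u)\,T\big(\eta^2\weights v_k\big)\,dx$. Expanding, one gets terms with $T$ on the cutoff (these are the $|\eta||T\eta|$ terms in the statement, after using $|Df(\X u)|\le L\deltaX^{(p-1)/2}$), a term with $T$ on the weight $\weights$ (which costs one $Tu$ and one $\X\X u$, absorbed by Young), and the worst term $\int D_{n+l}f(\X u)\,\eta^2\weights\,T(X_k u)\,dx$. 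Here I would rewrite $T(X_ku)$ using the commutation relations $TX_k=X_kT$ together with the bracket structure $T=[X_j,X_{n+j}]$, so that $TX_ku$ is expressed through second horizontal derivatives of $Tu$-free quantities plus $\X Tu$; then integrate by parts once more in a horizontal direction to convert $\X Tu$-factors into $\X\X u$-type factors hitting the weight and cutoff, again closing by Young's inequality. Carrying the bookkeeping of the $(\beta+1)$ powers through these repeated integrations by parts is where the $(\beta+1)^4$ ultimately comes from. Throughout, all constants depend only on $n,p,L$ because the ellipticity bounds \eqref{structure} are used in scale-invariant form and $\nu,M,\delta$ never enter; collecting terms and choosing the Young parameters small completes the proof.

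Since the paper explicitly defers this to the Appendix and only the structure of the argument is needed here, I would present the above as the skeleton: differentiate the equation, test with $\eta^2\weights X_k u$, sum in $k$, use ellipticity for the good term, and dispatch the $Tu$-linear and $T(Df)$ terms by integration by parts plus Young's inequality — with the $T(D_{\cdot}f(\X u))$ term, handled via the commutator identities, being the main obstacle.
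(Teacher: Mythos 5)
Your skeleton matches the paper's Appendix proof: differentiate the equation (Lemma \ref{ws:horizontal}), test with $\eta^2\cdot(\text{weight})\cdot X_lu$, sum over $l$, use ellipticity for the coercive term, and dispatch $T\big(D_{n+l}f(\Xu)\big)$ by moving $T$ onto the test function and then integrating by parts once more in a horizontal direction (only $TX_l=X_lT$ is needed here; the bracket identity $T=[X_j,X_{n+j}]$ plays no role). However, there is a genuine gap in your treatment of the weight. First, the correct test function is $\varphi=\eta^2\deltaX^{\beta/2}X_lu$, not $\eta^2\weights X_lu$: the ellipticity already supplies a factor $\weight$, so your choice produces $\deltaX^{\frac{2(p-2)+\beta}{2}}\vert\X\Xu\vert^2$ on the left, which is not the claimed quantity (for $p>2$ it amounts to proving the lemma only for $\beta\ge p-2$).

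More seriously, the term created by differentiating the weight in the test function,
\[
\frac{\beta}{2}\int_\Omega\eta^2\sum_{i,j,l}D_jD_if(\Xu)\,X_jX_lu\;\deltaX^{\frac{\beta-2}{2}}X_i(\vert\Xu\vert^2)\,X_lu\,dx,
\]
is \emph{not} a lower-order error: estimating it crudely gives $c\,\beta\int\eta^2\weights\vert\X\Xu\vert^2$, i.e.\ $\beta$ times the main term, which cannot be reabsorbed for large $\beta$, and no Young-type splitting helps since the term is already quadratic in $\X\Xu$ with the full weight. The proof closes only because one sums over $l$ \emph{before} estimating: then $\sum_lX_jX_lu\,X_lu=\tfrac12X_j(\vert\Xu\vert^2)$, so this term equals $\tfrac{\beta}{4}\int\eta^2\deltaX^{\frac{\beta-2}{2}}\langle D^2f(\Xu)\X(\vert\Xu\vert^2),\X(\vert\Xu\vert^2)\rangle\ge0$ and can simply be kept on the coercive side and discarded. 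Your proposal uses the summation over $k$ only to reassemble $\vert\Xu\vert^2$ on the right-hand side and treats the weight derivative as a benign source of $(\beta+1)$ factors; as written, that step fails. The rest of your outline (Cauchy--Schwarz producing the $(\beta+1)^4$ in front of the $\vert Tu\vert^2$ term, the $\vert\eta\vert\vert T\eta\vert$ term from $T$ hitting the cutoff) is consistent with the paper.
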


\subsection{Main Lemma}
The following lemma gives a reverse type inequality for $Tu$, from which we obtain the integrability
result for $Tu$. 
Eventually, Corollary \ref{cor1} allows us to remove the last integral in
Lemma \ref{caccioppoli:horizantal:sigma} to obtain Theorem \ref{thm:Xu:cacci}. It is crucial for the proof of 
the Lipschitz continuity of $u$. To prove this main lemma, we use a special test function and we invoke the weak formula (\ref{weak1}), instead of 
the equations (\ref{equation:horizontal}) and (\ref{equation:horizontal2}) for $X_lu$. This allows us to apply Lemma \ref{caccioppoli:T} to conclude the proof.

\begin{lemma}\label{caccioppoli:horizontal:T}
For any $\beta\ge 2$ and all non-negative $\eta\in C^\infty_0(\Omega)$, we have
\begin{equation*}
\begin{aligned}
\int_\Omega \eta^{\beta+2}&\weight\vert Tu\vert^{\beta}\vert\X\X u\vert^2\, dx\\
&\le
c(\beta+1)^2\vert\vert \X\eta\vert\vert_{L^\infty}^2\int_\Omega\eta^\beta
\deltaX^{\frac p 2} \vert Tu\vert^{\beta-2}\vert \X\Xu\vert^2\, dx,
\end{aligned}
\end{equation*}
where $c=c(n,p,L)>0$.
\end{lemma}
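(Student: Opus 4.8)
\textbf{Proof proposal for Lemma \ref{caccioppoli:horizontal:T}.}

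The plan is to test the weak formulation \eqref{weak1} for $v_l = X_l u$ (and the analogous one for $v_{n+l}$) with a test function of the form $\varphi = \eta^{\beta+2}\,|Tu|^{\beta}\,X_l u$, sum over $l=1,\dots,n$ together with the corresponding contributions from $v_{n+l}=X_{n+l}u$, and integrate by parts so that every derivative falls either onto $\eta$, onto $|Tu|^{\beta}$, or produces a genuine second horizontal derivative $X_j X_l u$. Because the supplementary regularity \eqref{apregularity} guarantees $\X u \in HW^{1,2}_{\loc}$ and $Tu \in HW^{1,2}_{\loc}\cap L^\infty_{\loc}$, all these manipulations are legitimate and no difference quotients are needed; the weight $\weight$ and the second-order terms are square-integrable against the cutoff. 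The ellipticity in \eqref{structureprime} (lower bound $\tfrac1\nu|\xi|^2$, but recorded structurally via \eqref{structure} so that the constants stay $\delta$-independent) turns the principal bilinear form $\sum_{i,j}D_jD_i f(\X u)\,X_j v_l\,X_i v_l$ into a term controlling $\weight|\X\X u|^2$ from below. The key algebraic point, exactly as in \eqref{weak1}, is that using $X_l\varphi$ as test function in \eqref{equation:main} produces, besides the ``good'' horizontal second-derivative term, only a term with a single factor of $Tu$ coming from the commutator $X_l X_{n+l} - X_{n+l} X_l = T$; there is no term with two vertical derivatives.

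After the integration by parts, the left-hand side will be (up to harmless lower-order pieces) $\int \eta^{\beta+2}\weight|Tu|^\beta|\X\X u|^2$, and the right-hand side splits into three kinds of terms: (i) terms where a derivative hit $\eta$, giving a factor $|\X\eta|$ (or $\eta^{\beta+1}|\X\eta|$) times $\eta^{\beta+1}\weight|Tu|^\beta|\X\X u|$; (ii) terms where a derivative hit $|Tu|^\beta$, giving a factor $\beta\,\eta^{\beta+2}\weight|Tu|^{\beta-1}|\X Tu|\,|\X\X u|$; and (iii) the commutator term with one $Tu$ and, after integrating the $T$ by parts off $D_{n+l}f(\X u)$, a factor $\weight$ times first horizontal derivatives, again producing $\eta^{\beta+2}\weight|Tu|^{\beta}|\X\X u|\cdot(\text{something of size }|\X\X u|)$ or a boundary-type term in $T\eta$. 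One then applies Young's inequality to each: the $|\X\X u|^2$ factors coming with the small coefficient $|\X\eta|$ or with $\beta|\X Tu|$ get absorbed. The genuinely delicate term is (ii), because it reintroduces $|\X Tu|$; the device is to invoke Lemma \ref{caccioppoli:T} with exponent $\beta-2$ in place of $\beta$ (this is where $\beta\ge 2$ is used) to estimate $\int\eta^{\beta+2}\weight|Tu|^{\beta-2}|\X Tu|^2$ by $\tfrac{c}{(\beta-1)^2}\int(|\X\eta|^2+\ldots)\weight|Tu|^{\beta}$ — wait, more precisely one first separates a factor $\eta^2$ worth of cutoff to play the role of the cutoff in Lemma \ref{caccioppoli:T}, so that after Young $\int\eta^{\beta+2}\weight|Tu|^{\beta-2}|\X Tu|^2$ is controlled by a constant times $\int \eta^{\beta}|\X\eta|^2\weight|Tu|^{\beta-2}|\X\X u|^2$ plus absorbable terms, using that $|\X Tu|\lesssim |\X\X u|$ in the relevant sense via Lemma \ref{caccioppoli:T}. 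Collecting everything, the surviving right-hand side is $c(\beta+1)^2\|\,|\X\eta|\,\|_{L^\infty}^2\int\eta^\beta\deltaX^{p/2}|Tu|^{\beta-2}|\X\X u|^2$, which is the claimed bound; the passage from $\weight=\deltaX^{(p-2)/2}$ to $\deltaX^{p/2}$ absorbs the two stray horizontal-gradient factors coming from term (iii).

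I expect the main obstacle to be the careful bookkeeping of term (iii), the commutator contribution: after testing with $\eta^{\beta+2}|Tu|^\beta X_l u$ one must integrate the vertical derivative $T$ in $\int T(D_{n+l}f(\X u))\varphi$ by parts onto $\varphi$, and $T$ acting on $|Tu|^\beta$ and on $X_l u$ must be re-expressed — $T X_l u$ is again controlled by $\X Tu$ up to commutators, and $T(|Tu|^\beta)$ brings $|Tu|^{\beta-1}Tu_{,t}$ which must be matched against the left-hand side. Keeping all constants independent of $\delta$, $M$, and $\nu$ (using only the structural inequalities \eqref{structure}, \eqref{mono}, \eqref{stru}, never the crude bounds \eqref{structureprime} with their $\nu$) is the other point requiring care, but it is routine once the algebra is organized; the polynomial dependence on $\beta$ must be tracked honestly to land exactly at $(\beta+1)^2$, which is why Young's inequality must be applied with $\beta$-independent splitting parameters and the $\beta$-powers collected only at the end.
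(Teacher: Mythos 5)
Your overall architecture is exactly the paper's: the same test function $\varphi=\eta^{\beta+2}|Tu|^{\beta}X_lu$ inserted into the weak formula (\ref{weak1}), the same decomposition of the right-hand side into cutoff, $|Tu|^\beta$-derivative, and commutator contributions, and the same reliance on Lemma \ref{caccioppoli:T} to dispose of the $\X Tu$ factors. But the sketch does not close at precisely the two points you yourself flag as delicate. First, for the commutator term $\int_\Omega\eta^{\beta+2}T\big(D_{n+l}f(\Xu)\big)|Tu|^{\beta}X_lu\,dx$ you propose to integrate the vertical derivative by parts onto the test function; that move produces $T\big(|Tu|^{\beta}Tu\big)$, i.e.\ the second vertical derivative $TTu$, for which neither (\ref{apregularity}) nor any lemma in the paper provides an estimate, so your plan to ``match $|Tu|^{\beta-1}Tu_{,t}$ against the left-hand side'' cannot be carried out. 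The paper avoids this entirely: since $T$ commutes with each $X_j$, one writes $T(D_{n+l}f(\Xu))=\sum_jD_jD_{n+l}f(\Xu)X_jTu$ by the chain rule, so the term is bounded by $c\int\eta^{\beta+2}\deltaX^{\frac{p-1}{2}}|Tu|^{\beta}|\X Tu|\,dx$ and is handled exactly like the other $\X Tu$ terms.

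Second, the mechanism by which the excess powers of $Tu$ and the factors $|\X Tu|$ are converted into $|\X\X u|^2$ is not ``$|\X Tu|\lesssim|\X\X u|$ in the relevant sense'' --- no such bound is available. What the paper uses is the pointwise commutator identity $Tu=X_1X_{n+1}u-X_{n+1}X_1u$, hence $|Tu|\le 2|\X\X u|$. After Young's inequality and Lemma \ref{caccioppoli:T} (applied with exponent $\beta$ and cutoff $\eta^{(\beta+4)/2}$, which yields a bound by $c\int\eta^{\beta+2}|\X\eta|^2\weight|Tu|^{\beta+2}\,dx$), one replaces two powers of $|Tu|$ by $4|\X\X u|^2$, so that each resulting term is either absorbed into the left-hand side via a small Young parameter or lands on the claimed right-hand side with $|Tu|^{\beta-2}|\X\X u|^2$; this peeling off of two powers is also the true reason the hypothesis $\beta\ge2$ is needed, rather than the application of Lemma \ref{caccioppoli:T} with exponent $\beta-2$ that you suggest. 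With these two corrections --- the chain-rule treatment of the $T$-term and the explicit use of $|Tu|\le2|\X\X u|$ --- your plan becomes the paper's proof.
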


\begin{proof}
Let $\eta\in C^\infty_0(\Omega)$ be a non-negative cut-off function. Fix $\beta\ge 2$ and
$l\in \{ 1,2,\ldots,n\}$.
Let $\varphi=\eta^{\beta+2}\vert Tu\vert^\beta X_lu$.
We use $\varphi$ as a test
function in (\ref{weak1}).
Note that
\[
X_i\varphi=\eta^{\beta+2} \vert Tu\vert^\beta X_iX_l u+\beta \eta^{\beta+2}\vert Tu\vert^{\beta-2}
TuX_l uX_iTu+(\beta+2)\eta^{\beta+1} X_i\eta\vert Tu\vert^\beta X_l u
\]
and that $X_{n+l} X_l=X_lX_{n+l}-T$. We
obtain that
\begin{equation}\label{weak2}
\begin{aligned}
&\int_{\Omega}\sum_i\eta^{\beta+2}\vert Tu\vert^\beta X_l\big(D_if(\Xu)\big)X_lX_iu\,
dx\\
=&\int_\Omega \eta^{\beta+2} X_l\big(D_{n+l}f(\X u)\big) \vert Tu\vert^\beta Tu\, dx\\
&-(\beta+2)\int_{\Omega}\sum_i\eta^{\beta+1} \vert Tu\vert^\beta X_l \big(D_if(\Xu)\big)X_luX_i\eta\, dx\\
&+\int_{\Omega} \eta^{\beta+2}T\big(D_{n+l}f(\Xu)\big)\vert Tu\vert^\beta X_lu\, dx.\\
&-\beta\int_\Omega \sum_i \eta^{\beta +2} \vert Tu\vert^{\beta-2} Tu X_lu X_l\big(D_if(\X u)\big)X_i Tu\, dx\\
=&I_1+I_2+I_3+I_4.
\end{aligned}
\end{equation}
Here and in the following, all of the sums for $i$ are from $1$ to $2n$. We will estimate both sides of (\ref{weak2}) as follows. For the
left hand side, the structure condition (\ref{structure}) implies that
\[
\int_{\Omega}\sum_i\eta^{\beta+2}\vert Tu\vert^\beta X_l\big(D_if(\Xu)\big)X_lX_iu\,
dx\ge \int_{\Omega} \eta^{\beta+2}\weight\vert Tu\vert^\beta \vert X_l\Xu\vert^2\, dx.
\]

For the right hand side, we will show that
the following estimate is true for each item.
\begin{equation}\label{claim1}
\begin{aligned}
\vert I_k\vert \le & c\tau\int_{\Omega}\eta^{\beta+2}\weight\vert Tu\vert^{\beta}\vert\X\X u\vert^2\, dx\\
&+\frac{c(\beta+1)^2\vert\vert \X\eta\vert\vert_{L^\infty}^2}{\tau}
\int_{\Omega}\eta^{\beta}\deltaX^{\frac p 2}\vert Tu\vert^{\beta-2}\vert \X\X u\vert^2\, dx,
\end{aligned}
\end{equation}
for $k=1,2,3, 4$, where $c=c(n,p,L)>0$ and $\tau>0$ is a constant. By the above estimates for both sides of (\ref{weak2}), we end up with
\begin{equation*}
\begin{aligned}
\int_{\Omega} \eta^{\beta+2}\weight &\vert Tu\vert^\beta \vert X_l\Xu\vert^2\, dx \le  c\tau\int_{\Omega}\eta^{\beta+2}\weight\vert Tu\vert^{\beta}\vert\X\X u\vert^2\, dx\\
&+ \frac{c(\beta+1)^2\vert\vert \X\eta\vert\vert_{L^\infty}^2}{\tau}
\int_{\Omega}\eta^{\beta}\deltaX^{\frac p 2}\vert Tu\vert^{\beta-2}\vert \X \X u\vert^2\, dx.
\end{aligned}
\end{equation*}
The above inequality is true for all $l=1,2,\ldots, n$. Similarly,
we can prove that it is true also for all $l=n+1,\ldots, 2n$. Then we may sum
up these estimates for all $l=1,2,\cdots, 2n$. Now,
by choosing $\tau>0$ small enough,
we complete the proof of the lemma, modulo the proof of (\ref{claim1}).

Now we prove (\ref{claim1}). 
First, we start with $I_4$. By the structure condition (\ref{structure}) and the Cauchy-Schwarz inequality
\begin{equation*}
\begin{aligned}
\vert I_4\vert\le& c\beta \int_\Omega
\eta^{\beta+2}\deltaX^{\frac{p-1}{2}}\vert Tu\vert^{\beta-1}\vert X_l \X u\vert\vert\X Tu\vert\, dx\\
\le & \frac{\tau}{\vert\vert \X\eta\vert\vert_{L^\infty}^2}\int_\Omega \eta^{\beta+4}\weight
\vert Tu\vert^\beta\vert\X Tu\vert^2\,
dx\\
&+\frac{c\beta^2\vert\vert \X\eta\vert\vert_{L^\infty}^2}{\tau}
\int_{\Omega}\eta^{\beta}\deltaX^{\frac p 2}\vert Tu\vert^{\beta-2}\vert X_l \X u\vert^2\, dx.
\end{aligned}
\end{equation*}
We then apply  Lemma \ref{caccioppoli:T} to estimate the first integral in the right hand side. By Lemma \ref{caccioppoli:T}, we have 
\begin{equation}\label{weak3}
\begin{aligned}
&\int_{\Omega}\eta^{\beta+4}\weight
\vert Tu\vert^\beta\vert\X Tu\vert^2\,
dx\\
\le & c\int_{\Omega} \eta^{\beta+2}\vert\X \eta\vert^2\weight\vert
Tu\vert^{\beta+2}\, dx.
\end{aligned}
\end{equation}
Thus, 
\begin{equation}\label{weak3prime}
\begin{aligned}
\vert I_4\vert
\le & c\tau\int_\Omega \eta^{\beta+2}\weight
\vert Tu\vert^{\beta+2}\,
dx\\
&+\frac{c\beta^2\vert\vert \X\eta\vert\vert_{L^\infty}^2}{\tau}
\int_{\Omega}\eta^{\beta}\deltaX^{\frac p 2}\vert Tu\vert^{\beta-2}\vert X_l \X u\vert^2\, dx.
\end{aligned}
\end{equation}
Note that $\vert Tu\vert\le 2\vert \X\X u\vert$. (\ref{weak3prime}) implies that $I_4$ satisfies (\ref{claim1}).

Second, we prove  (\ref{claim1}) for $I_1$.
Integration by parts yields
\begin{equation*}
\begin{aligned}
I_1=&-\int_{\Omega} D_{n+l} f(\Xu)X_l(\eta^{\beta+2}\vert Tu\vert^\beta
Tu)\, dx\\
=&-(\beta+1)\int_{\Omega} \eta^{\beta+2} \vert Tu\vert^\beta D_{n+l}f(\Xu)X_lTu\, dx\\
&-(\beta+2)\int_{\Omega}\eta^{\beta+1}
D_{n+l}f(\Xu) X_l\eta \vert Tu\vert^\beta Tu\, dx
=I_{11}+I_{12}.
\end{aligned}
\end{equation*}
We will show that (\ref{claim1}) holds for both $I_{11}$ and $I_{12}$.
For $I_{11}$, by Young's inequality,
\begin{equation*}
\begin{aligned}
\vert I_{11}\vert \le & c(\beta+1)
\int_{\Omega}\eta^{\beta+2}\deltaX^{\frac{p-1}{2}} \vert Tu\vert^\beta\vert \X Tu\vert\,
dx\\
\le & \frac{\tau}{\vert\vert \X\eta\vert\vert_{L^\infty}^2}\int_{\Omega}\eta^{\beta+4}\weight
\vert Tu\vert^\beta\vert\X Tu\vert^2\,
dx\\
&+\frac{c(\beta+1)^2 \vert\vert \X\eta\vert\vert_{L^\infty}^2}{\tau}
\int_{\Omega}\eta^{\beta}\deltaX^{\frac p 2}\vert Tu\vert^\beta\, dx,
\end{aligned}
\end{equation*}
which, together with (\ref{weak3}) and the fact  $\vert Tu\vert\le 2\vert \X\X u\vert$,  implies that (\ref{claim1}) holds for $I_{11}$.
For $I_{12}$, (\ref{claim1}) follows from
\[
\vert I_{12}\vert \le  c(\beta+2)\int_{\Omega}\eta^{\beta+1}\vert\X\eta\vert\deltaX^{\frac{p-1}{2}}\vert Tu\vert^{\beta+1}\, dx,
\]
and Young's inequality. This proves that $I_1$ satisfies (\ref{claim1}), too.

Third, for $I_2$, we have
\[
I_2\le c(\beta+2)\int_{\Omega}\eta^{\beta+1}\vert\X\eta\vert\deltaX^{\frac{p-1}{2}}\vert Tu\vert^\beta\vert X_l\X u\vert\, dx,
\]
from which, together with Young's inequality and  the fact  $\vert Tu\vert\le 2\vert \X\X u\vert$, (\ref{claim1}) for $I_2$ follows. 

Finally,  $I_3$ has the same bound as that of $I_{11}$.
\[
\vert I_3\vert\le c \int_\Omega \eta^{\beta+2}\deltaX^{\frac{p-1}{2}}\vert Tu  \vert^\beta\vert \X Tu\vert\, dx.
\]
Thus, $I_3$ satisfies (\ref{claim1}), too. This completes the proof of (\ref{claim1}), and hence that of the lemma.
\end{proof}



By H\"older's inequality, the following corollary is an easy consequence of Lemma \ref{caccioppoli:horizontal:T}.
\begin{corollary}\label{cor1}
For any $\beta\ge 2$ and all non-negative $\eta\in C^\infty_0(\Omega)$, we have
\begin{equation*}
\begin{aligned}
\int_\Omega \eta^{\beta+2}&\weight\vert Tu\vert^\beta\vert \X\X u\vert^2\, dx\\
&\le c^{\frac{\beta}{2}}(\beta+1)^{\beta}\vert\vert\X\eta\vert\vert_{L^\infty}^\beta
\int_\Omega \eta^2\deltaX^{\frac {p-2+\beta} {2}}\vert\X\X u\vert^2\, dx,
\end{aligned}
\end{equation*}
where $c=c(n,p,L)>0$.
\end{corollary}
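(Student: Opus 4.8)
The plan is to bootstrap Lemma \ref{caccioppoli:horizontal:T} on itself finitely many times, lowering the power of $|Tu|$ by two at each step until it reaches $|Tu|^0$, and then absorb the factors $\deltaX^{p/2}$ into the weight $\weight$ paid for by the extra copies of $\eta$. Concretely, I would set, for an integer $m$ with $0\le m\le \beta/2$, the quantity
\[
J_m=\int_\Omega \eta^{\beta+2-2m}\weight\,|Tu|^{\beta-2m}\,|\X\X u|^2\,dx,
\]
so that $J_0$ is the left-hand side we want to bound (after replacing $\eta$ by $\eta^{?}$, see below) and $J_{\beta/2}$, when $\beta$ is even, is the integral on the right-hand side of the claimed inequality with $\eta^2$ in front. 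Lemma \ref{caccioppoli:horizontal:T}, applied with exponent $\beta-2m$ in place of $\beta$ and with $\eta^{?}$ a suitable power of $\eta$, gives a recursion of the shape $J_m\le c\,(\beta+1)^2\,\|\X\eta\|_{L^\infty}^2\,J_{m+1}$, where I must be careful that the power of $\eta$ carried along matches: Lemma \ref{caccioppoli:horizontal:T} with exponent $\gamma\ge 2$ reads $\int\eta^{\gamma+2}(\cdots)|Tu|^\gamma|\X\X u|^2\le c(\gamma+1)^2\|\X\eta\|_\infty^2\int\eta^\gamma(\cdots)|Tu|^{\gamma-2}|\X\X u|^2$, i.e. it consumes exactly two powers of $\eta$ per step, which is precisely what the definition of $J_m$ is arranged to track.

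After iterating $m=0,1,\ldots,\beta/2-1$ (assuming first that $\beta$ is an even integer), one obtains
\[
J_0\le \prod_{m=0}^{\beta/2-1} c\,(\beta-2m+1)^2\,\|\X\eta\|_{L^\infty}^2\; J_{\beta/2}
\le c^{\beta/2}\,(\beta+1)^{\beta}\,\|\X\eta\|_{L^\infty}^{\beta}\int_\Omega \eta^2\,\weight\,|\X\X u|^2\,dx,
\]
using $\prod_{m}(\beta-2m+1)^2\le (\beta+1)^{\beta}$ since each of the $\beta/2$ factors is at most $(\beta+1)^2$. Finally I would observe that $\weight=\deltaX^{(p-2)/2}$, so the last integral is exactly $\int_\Omega\eta^2\,\deltaX^{(p-2+\beta)/2}\,|\X\X u|^2\,dx$ only after we account for the remaining $|Tu|^0$; but wait — there is no remaining power of $\deltaX^{\beta/2}$ produced by the iteration, so to land on the stated exponent $\tfrac{p-2+\beta}{2}$ I should instead keep, at every step, the estimate $|Tu|\le 2|\X\X u|$ and trade it for $\deltaX^{1/2}$ is not available; rather, the extra $\deltaX^{\beta/2}$ must come from somewhere. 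The cleanest route: rewrite the right side of Lemma \ref{caccioppoli:horizontal:T} using $\deltaX^{p/2}=\deltaX^{(p-2)/2}\cdot\deltaX$ and bound $\deltaX^{1/2}$ crudely is wrong too. The correct bookkeeping is that the claimed inequality in Corollary \ref{cor1} has weight exponent $\tfrac{p-2+\beta}{2}$, i.e. exactly $\tfrac{p-2}{2}+\tfrac{\beta}{2}$ more than... no: $\tfrac{p-2+\beta}{2}=\tfrac{p-2}{2}+\tfrac{\beta}{2}$, and the left side has weight exponent $\tfrac{p-2}{2}$ together with $|Tu|^\beta$; since $|Tu|\le 2|\X\X u|$ each of the $\beta$ factors of $|Tu|$ is bounded but that does not help increase a power of $\deltaX$. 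The resolution is to use, in the very last application, the elementary bound $|Tu|^{0}\cdot\deltaX^{p/2}$ and then note $\deltaX^{p/2}=\deltaX^{(p-2)/2}\deltaX\ge \deltaX^{(p-2)/2}|X_iu|^2$, none of which gives $\deltaX^{\beta/2}$; hence I must instead apply H\"older's inequality as the corollary's statement explicitly advertises.

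So the actual final step — and the one I expect to be the only nontrivial point — is: after running the iteration one gets $J_0$ bounded by $c^{\beta/2}(\beta+1)^\beta\|\X\eta\|_\infty^\beta \int\eta^2\deltaX^{(p-2)/2}|\X\X u|^2$, and separately Lemma \ref{caccioppoli:horizontal:T} (or rather its one-step form) combined with H\"older's inequality with exponents $\tfrac{\beta+2}{2}$ and $\tfrac{\beta+2}{\beta}$ on the product $\big(\eta^{\beta+2}\weight|Tu|^\beta|\X\X u|^2\big)^{\beta/(\beta+2)}\cdot\big(\text{lower order}\big)$ interpolates the mixed term $\int\eta^\beta\deltaX^{p/2}|Tu|^{\beta-2}|\X\X u|^2$ between $\int\eta^{\beta+2}\weight|Tu|^\beta|\X\X u|^2$ and $\int\eta^2\deltaX^{(p-2+\beta)/2}|\X\X u|^2$, turning the recursion $J_m\lesssim (\beta+1)^2\|\X\eta\|_\infty^2 J_{m+1}$ into a self-improving inequality whose endpoint is exactly the claimed bound. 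In short: iterate Lemma \ref{caccioppoli:horizontal:T}, telescope the constants into $c^{\beta/2}(\beta+1)^\beta\|\X\eta\|_\infty^\beta$, handle non-integer $\beta$ by interpolating between the nearest integers (or simply by applying H\"older once at the end, which is what the corollary's proof is said to do), and the main obstacle is getting the weight exponents and the powers of $\eta$ to match up cleanly through the H\"older step.
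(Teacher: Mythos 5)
Your proposal has a genuine gap: the iteration scheme that occupies most of it does not close. Lemma \ref{caccioppoli:horizontal:T} takes you from $\int\eta^{\beta+2}\weight|Tu|^{\beta}|\X\X u|^2$ to $\int\eta^{\beta}\deltaX^{\frac p2}|Tu|^{\beta-2}|\X\X u|^2$; the weight on the right is $\deltaX^{\frac p2}$, one full power of $\deltaX$ higher than the $\weight$ on the left. So the output of one application is \emph{not} of the form of your $J_{m+1}$ (which keeps the weight $\weight$ fixed at every stage), and the lemma cannot be fed its own output. You noticed the resulting mismatch yourself when you could not locate the factor $\deltaX^{\beta/2}$ in the target, and your final paragraph retreats to ``apply H\"older once at the end'' --- which is in fact the paper's entire proof --- but the exponents you write there, $\tfrac{\beta+2}{2}$ and $\tfrac{\beta+2}{\beta}$, do not work: factoring the mixed integrand as $A^{\beta/(\beta+2)}C^{2/(\beta+2)}$ forces the second factor $C$ to carry $|Tu|^{-2}$, so no admissible interpolant exists with that splitting.

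The correct one-step argument is as follows. Set $A=\eta^{\beta+2}\weight|Tu|^{\beta}|\X\X u|^2$ and $B=\eta^{2}\deltaX^{\frac{p-2+\beta}{2}}|\X\X u|^2$. A direct check of the exponents of $\eta$, $\deltaX$, $|Tu|$ and $|\X\X u|$ shows
\begin{equation*}
\eta^{\beta}\deltaX^{\frac p2}|Tu|^{\beta-2}|\X\X u|^2=A^{\frac{\beta-2}{\beta}}\,B^{\frac{2}{\beta}},
\end{equation*}
so H\"older's inequality with exponents $\tfrac{\beta}{\beta-2}$ and $\tfrac{\beta}{2}$ turns Lemma \ref{caccioppoli:horizontal:T} into
$\int A\le c(\beta+1)^2\Vert\X\eta\Vert_{L^\infty}^2\big(\int A\big)^{\frac{\beta-2}{\beta}}\big(\int B\big)^{\frac{2}{\beta}}$.
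Dividing by $\big(\int A\big)^{\frac{\beta-2}{\beta}}$ --- finite by \trif{apregularity} and \trif{supplementary} --- and raising to the power $\tfrac{\beta}{2}$ gives exactly the stated bound with constant $c^{\beta/2}(\beta+1)^{\beta}\Vert\X\eta\Vert_{L^\infty}^{\beta}$. No iteration and no separate treatment of non-integer $\beta$ is needed; the case $\beta=2$ is the degenerate case of H\"older where the mixed term already equals $\int B$.
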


\subsection{Proof of Theorem \ref{thm:lip}}
We will prove the following surprising Caccioppoli type inequality for $\X u$, from which the Lipschitz continuity of $u$ follows
by the well-known Moser iteration. It is similar to that for weak solutions of Riemannian
elliptic equations. The following theorem is an easy consequence of  Lemma \ref{caccioppoli:horizantal:sigma} and Corollary \ref{cor1}.

\begin{theorem}\label{thm:Xu:cacci}
Let  $1<p<\infty$. Then for any $\beta\ge 2$ and
all non-negative $\eta\in C^\infty_0(\Omega)$, we have that
\begin{equation}\label{Xu:good}
\int_\Omega \eta^2\weights\vert\X \X u\vert^2\, dx\le c(\beta+1)^{10}K\int_{spt(\eta)} \deltaX^{\frac{p+\beta}{2}}\, dx,
\end{equation}
where $K=\vert\vert \X\eta\vert\vert_{L^\infty}^2+\vert\vert\eta
T\eta\vert\vert_{L^\infty}$ and $c>0$ depends only on $n,p,L$.
\end{theorem}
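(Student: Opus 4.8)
### Proof proposal for Theorem \ref{thm:Xu:cacci}

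The plan is to start from Lemma \ref{caccioppoli:horizantal:sigma} with the parameter $\beta$ and then use Corollary \ref{cor1} to absorb the only bad term, namely the one involving $|Tu|^2$. First I would apply Lemma \ref{caccioppoli:horizantal:sigma} directly: for any $\beta\ge 0$ and any non-negative $\eta\in C^\infty_0(\Omega)$,
\[
\int_{\Omega}\eta^2\weights\vert\X\X u\vert^2\, dx\le c\int_\Omega (\vert \X \eta\vert^2+\vert \eta\vert\vert T\eta\vert)\deltaX^{\frac{p+\beta}{2}}\, dx+c(\beta+1)^4\int_\Omega\eta^2\weights\vert Tu\vert^2\, dx.
\]
The first term on the right is already of the desired form, since $\vert\X\eta\vert^2+\vert\eta\vert\vert T\eta\vert \le K$ and it is supported in $\mathrm{spt}(\eta)$, contributing $cK\int_{\mathrm{spt}(\eta)}\deltaX^{\frac{p+\beta}{2}}\,dx$. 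So everything reduces to controlling the last integral.

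For the last integral, observe that $\weights\vert Tu\vert^2 = \deltaX^{\frac{p-2+\beta}{2}}\vert Tu\vert^2$, and writing $\deltaX^{\frac{p-2+\beta}{2}}=\deltaX^{\frac{p-2}{2}}\deltaX^{\frac{\beta}{2}}$ we recognize this (after multiplying and dividing appropriately) as exactly the kind of quantity estimated in Corollary \ref{cor1}. Concretely, Corollary \ref{cor1} with the same $\beta$ gives
\[
\int_\Omega \eta^{\beta+2}\weight\vert Tu\vert^\beta\vert \X\X u\vert^2\, dx\le c^{\frac{\beta}{2}}(\beta+1)^{\beta}\vert\vert\X\eta\vert\vert_{L^\infty}^\beta\int_\Omega \eta^2\deltaX^{\frac {p-2+\beta}{2}}\vert\X\X u\vert^2\, dx,
\]
which bounds a $|Tu|^\beta$-weighted quantity by a $|Tu|$-free one — the wrong direction at first sight. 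The right way to use it is instead to bound $\int_\Omega\eta^2\weights\vert Tu\vert^2\,dx$ directly: using $\vert Tu\vert\le 2\vert\X\X u\vert$ we have $\vert Tu\vert^2\le c\vert Tu\vert^{\beta}\cdot$ (something)? No — the cleaner route is to note $\int_\Omega\eta^2\deltaX^{\frac{p-2+\beta}{2}}\vert Tu\vert^2\,dx$, and here one interpolates: since $|Tu|\le 2|\X\X u|$, one does not gain. The genuine mechanism, which I would carry out, is to replace $\eta$ by $\eta^{(\beta+2)/2}$ in Lemma \ref{caccioppoli:horizantal:sigma}'s bad term and then invoke Corollary \ref{cor1} to replace $\int \eta^{\beta+2}\weight|Tu|^\beta|\X\X u|^2$ — obtained after writing $|Tu|^2 = |Tu|^{\beta}\cdot|Tu|^{2-\beta}$ is not legitimate for $\beta>2$.

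The honest approach is the following. Apply Lemma \ref{caccioppoli:horizantal:sigma} not with exponent $\beta$ but iterate, or more simply: in the term $c(\beta+1)^4\int_\Omega\eta^2\weights\vert Tu\vert^2\,dx$, use Young's inequality in the form $\vert Tu\vert^2\le \epsilon\vert Tu\vert^{?}+C_\epsilon$ — this also does not close. I expect the actual argument to run as follows, and this is the step I flag as the main obstacle to get exactly right: one takes Corollary \ref{cor1} with $\beta$ replaced by $2$ (so $|Tu|^2$ appears), getting
\[
\int_\Omega \eta^{4}\weight\vert Tu\vert^2\vert \X\X u\vert^2\, dx\le c(\beta+1)^{2}\vert\vert\X\eta\vert\vert_{L^\infty}^2\int_\Omega \eta^2\deltaX^{\frac {p}{2}}\vert\X\X u\vert^2\, dx,
\]
then combines $\weights|Tu|^2\le \deltaX^{\frac{\beta}{2}}\cdot\weight|Tu|^2$ with $|Tu|\le 2|\X\X u|$ to trade one factor of $|Tu|$ against $|\X\X u|$, and finally re-absorbs $\int\eta^2\deltaX^{(p-2+\beta)/2}|\X\X u|^2$ — which is the left-hand side of \eqref{Xu:good} — into the left side after choosing the implicit small constant correctly, producing the total power $(\beta+1)^{10}$ from $(\beta+1)^4\cdot(\beta+1)^{?}$ bookkeeping. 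The main work is this absorption and tracking the exponent $10$; once the bad term is re-expressed via Corollary \ref{cor1} purely in terms of $\int\eta^2\deltaX^{(p-2+\beta)/2}|\X\X u|^2$ and $\int_{\mathrm{spt}\eta}\deltaX^{(p+\beta)/2}$, absorbing and collecting constants finishes the proof.
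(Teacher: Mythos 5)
Your overall architecture (Lemma \ref{caccioppoli:horizantal:sigma} to isolate the bad term, Corollary \ref{cor1} to control it, absorption at the end) is the paper's architecture, but the one step you explicitly flag as the obstacle is precisely the step you have not supplied, and the substitute you sketch does not work. The missing idea is a H\"older interpolation that \emph{raises} the power of $Tu$ while shifting the weight $\deltaX^{\beta/2}$ onto a harmless factor:
\begin{equation*}
\int_\Omega\eta^2\weights\vert Tu\vert^2\, dx
\le \Big(\int_\Omega \eta^{\beta+2}\weight\vert Tu\vert^{\beta+2}\, dx\Big)^{\frac{2}{\beta+2}}
\Big(\int_{\operatorname{spt}(\eta)}\deltaX^{\frac{p+\beta}{2}}\, dx\Big)^{\frac{\beta}{\beta+2}}.
\end{equation*}
Now $\vert Tu\vert^{\beta+2}\le 4\vert Tu\vert^{\beta}\vert\X\X u\vert^2$ puts the first factor exactly in the form of the left side of Corollary \ref{cor1} \emph{with the same} $\beta$, whose right side is the left side of \eqref{Xu:good} raised to the power $\tfrac{2}{\beta+2}$ (times $c^{\beta/2}(\beta+1)^\beta\Vert\X\eta\Vert_{L^\infty}^\beta$ to the same power). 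Since $\tfrac{2}{\beta+2}<1$, Young's inequality with exponents $\tfrac{\beta+2}{2}$ and $\tfrac{\beta+2}{\beta}$ absorbs that contribution into the left-hand side and leaves $c(\beta+1)^{\frac{4(\beta+2)}{\beta}+2}K\int_{\operatorname{spt}(\eta)}\deltaX^{\frac{p+\beta}{2}}\,dx$; for $\beta\ge 2$ the exponent is at most $10$, which is the bookkeeping you were missing.

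Your proposed replacement — applying Corollary \ref{cor1} with $\beta$ replaced by $2$ — fails for two reasons. First, it produces the quantity $\int_\Omega\eta^4\weight\vert Tu\vert^2\vert\X\X u\vert^2\,dx$, which is not the term $\int_\Omega\eta^2\weights\vert Tu\vert^2\,dx$ you need to control; the discrepancy is exactly the weight $\deltaX^{\beta/2}$ versus $\eta^2\vert\X\X u\vert^2$, and bridging it is what the H\"older step above does. Second, its right-hand side is $\int_\Omega\eta^2\deltaX^{\frac{p}{2}}\vert\X\X u\vert^2\,dx$, which for $\beta>2$ is neither the left-hand side of \eqref{Xu:good} (so it cannot be absorbed) nor controlled by $\int\deltaX^{\frac{p+\beta}{2}}$ without yet another Caccioppoli estimate, making the argument circular. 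Likewise your instinct to ``trade a factor of $\vert Tu\vert$ against $\vert\X\X u\vert$'' goes in the wrong direction: the gain in this proof comes from increasing the power of $Tu$ to $\beta+2$ so that almost all of the weight can be pushed onto the integrable factor $\int_{\operatorname{spt}(\eta)}\deltaX^{\frac{p+\beta}{2}}$.
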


\begin{proof}
Our goal is to prove (\ref{Xu:good}). By Lemma  \ref{caccioppoli:horizantal:sigma}, we only need to estimate
the integral $\int_\Omega \eta^2\weights \vert Tu\vert^2\, dx$. To this end, by H\"older's inequality,
\begin{equation*}
\begin{aligned}
&\int_\Omega\eta^2\weights\vert Tu\vert^2\, dx\\
\le& \left(\int_\Omega \eta^{\beta+2}\weight\vert Tu\vert^{\beta+2}\, dx\right)^{\frac{2}{\beta+2}}
\left(\int_{spt(\eta)}\deltaX^{\frac{p+\beta}{2}}\, dx\right)^{\frac{\beta}{\beta+2}}.
\end{aligned}
\end{equation*}
Note that $\vert Tu\vert\le 2 \vert\X\X u\vert$. We can continue
to estimate to first integral in the right hand side by Corollary
\ref{cor1}. Then plugging this estimate to the inequality in Lemma
\ref{caccioppoli:horizantal:sigma}, we obtain by Young's inequality
\[
\int_\Omega \eta^2\weights\vert\X \X u\vert^2\, dx
\le c(\beta+1)^{\frac{4(\beta+2)}{\beta}+2} K\int_{spt(\eta)} \deltaX^{\frac{p+\beta}{2}}\, dx,
\]
where $K=\vert\vert \X\eta\vert\vert_{L^\infty}^2+\vert\vert\eta T\eta\vert\vert_{L^\infty}$. This proves the theorem.
\end{proof}

Combining Corollary \ref{cor1} and Theorem \ref{thm:Xu:cacci}, we
obtain the following estimate for $Tu$, which is critical for the
proof of the H\"older continuity of the horizontal gradient of
solutions in Section \ref{section:Holder}.

\begin{corollary}\label{cor:Tu:high}
For any $\beta\ge 2$ and all non-negative $\eta\in C^\infty_0(\Omega)$, we have that
\[
\int_\Omega\eta^{\beta+2}\weight\vert Tu\vert^{\beta+2}\le c(\beta)K^{\frac{\beta+2}{2}}
\int_{spt(\eta)}\deltaX^{\frac {p+\beta} {2}}\, dx,
\]
where $K=\vert\vert \X\eta\vert\vert_{L^\infty}^2+\vert\vert\eta
T\eta\vert\vert_{L^\infty}$ and  $c(\beta)>0$ depends on $n,p,L$
and $\beta$.
\end{corollary}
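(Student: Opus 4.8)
The goal is to bound $\int_\Omega\eta^{\beta+2}\weight|Tu|^{\beta+2}\,dx$. The strategy is to combine the reverse-type inequality for $Tu$ (Corollary \ref{cor1}), which already converts a weighted integral of $|Tu|^\beta|\X\X u|^2$ into a plain weighted integral of $|\X\X u|^2$, with the Caccioppoli inequality for $\X u$ of Theorem \ref{thm:Xu:cacci}, which controls exactly that plain weighted integral of $|\X\X u|^2$ by an integral of a pure power of $\deltaX$. So the plan is: (i) use the elementary pointwise bound $|Tu|\le 2|\X\X u|$ to write $\eta^{\beta+2}\weight|Tu|^{\beta+2}\le 2^2\,\eta^{\beta+2}\weight|Tu|^\beta|\X\X u|^2$; (ii) apply Corollary \ref{cor1} to the right-hand side to get $\le c^{\beta/2}(\beta+1)^\beta\|\X\eta\|_{L^\infty}^\beta\int_\Omega\eta^2\deltaX^{\frac{p-2+\beta}{2}}|\X\X u|^2\,dx$; (iii) apply Theorem \ref{thm:Xu:cacci} to this last integral, which bounds it by $c(\beta+1)^{10}K\int_{spt(\eta)}\deltaX^{\frac{p+\beta}{2}}\,dx$ with $K=\|\X\eta\|_{L^\infty}^2+\|\eta T\eta\|_{L^\infty}$.

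Putting the three steps together gives
\[
\int_\Omega\eta^{\beta+2}\weight|Tu|^{\beta+2}\,dx
\le c^{\beta/2}(\beta+1)^{\beta}\|\X\eta\|_{L^\infty}^{\beta}\,c(\beta+1)^{10}K\int_{spt(\eta)}\deltaX^{\frac{p+\beta}{2}}\,dx.
\]
Since $\|\X\eta\|_{L^\infty}^\beta\le K^{\beta/2}$ and $K\le K^{\beta/2+1}$ whenever $K\ge 1$ — and the case $K<1$ can be reduced to $K=1$ by scaling $\eta$, or simply absorbed since the statement only claims the bound with constant $c(\beta)$ depending on $\beta$ — the product of the $\X\eta$ and $K$ factors is dominated by $K^{(\beta+2)/2}$, and all the numerical and $(\beta+1)$-power factors collapse into a single constant $c(\beta)$ depending on $n,p,L,\beta$. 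This yields the claimed inequality.

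The only mild subtlety — hardly an obstacle — is the bookkeeping of the weight exponents: Theorem \ref{thm:Xu:cacci} is stated with the weight $\weights=\deltaX^{\frac{p-2+\beta}{2}}$, which is precisely the weight produced in step (ii), so the three estimates chain together with no mismatch, and the output weight $\deltaX^{\frac{p+\beta}{2}}$ on the right is the same in Corollary \ref{cor1} and Theorem \ref{thm:Xu:cacci}. One must also keep the hypothesis $\beta\ge 2$ throughout, which is the common assumption of both Corollary \ref{cor1} and Theorem \ref{thm:Xu:cacci}, so it is automatically satisfied. Everything else is a routine consolidation of constants, and no new analytic input (no difference quotients, no new test function) is needed beyond the two results already established.
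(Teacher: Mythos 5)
Your proof is correct and follows exactly the route the paper intends: the paper states that Corollary \ref{cor:Tu:high} is obtained by "combining Corollary \ref{cor1} and Theorem \ref{thm:Xu:cacci}", and your chain $|Tu|^{\beta+2}\le 4|Tu|^\beta|\X\X u|^2$, then Corollary \ref{cor1}, then Theorem \ref{thm:Xu:cacci}, is precisely that combination. The only superfluous step is your discussion of the case $K\ge 1$ versus $K<1$: since $\|\X\eta\|_{L^\infty}^\beta\le K^{\beta/2}$, the product $\|\X\eta\|_{L^\infty}^\beta\cdot K\le K^{\frac{\beta}{2}+1}=K^{\frac{\beta+2}{2}}$ holds identically, with no condition on the size of $K$.
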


Now we give the proof of Theorem \ref{thm:lip}.

\begin{proof}[Proof of Theorem \ref{thm:lip}]  Theorem \ref{thm:lip} follows from Theorem \ref{thm:Xu:cacci} by Moser's iteration. 
The proof is the same as that in the setting of Euclidean spaces, see e.g. proof of Theorem
3.34 in \cite{HKM}. We give the outline here.  The Caccioppoli inequality (\ref{Xu:good})
and the Sobolev inequality (\ref{sobolev}) yield
\begin{equation}\label{moser}
\left(\int_\Omega \big(\delta+\vert\X u\vert^2\big)^{\frac{p+\beta}{2}\kappa}\eta^{2\kappa}\, dx\right)^{\frac{1}{\kappa}}\le c(p+\beta)^{12}K\int_{spt(\eta)} \deltaX^{\frac{p+\beta}{2}}\, dx,
\end{equation}
for all $\beta\ge 2$ and for non-negative $\eta\in C^\infty_0(\Omega)$, where $\kappa=Q/(Q-2)=(n+1)/n$, $c=c(n,p,L)>0$ and $K=\vert\vert \X\eta\vert\vert_{L^\infty}^2+\vert\vert\eta
T\eta\vert\vert_{L^\infty}$. Let $B_r\subset \Omega$ and $0<\sigma<1$ be fixed. We define 
\[ r_i=\sigma r+\frac{(1-\sigma)r}{2^{i}}, \quad \beta_i=(p+2)\kappa^i-p, \quad\text{ for } i=0,1,\ldots.\]
By choosing a standard cut-off function $\eta\in C_0^\infty(B_{r_i})$ with $\eta=1$ in $B_{r_{i+1}}$ and letting $\beta=\beta_i$ in (\ref{moser}), we obtain 
\begin{equation}\label{moser2}
\begin{aligned}
&\left(\intav_{B_{r_{i+1}}}\big(\delta+\vert \X u\vert^2\big)^{\frac{\alpha_{i+1}}{2}}\, dx\right)^{\frac{1}{\alpha_{i+1}}}\\
\le & c^{\frac{1}{\alpha_i}}\alpha_i^{\frac{12}{\alpha_i}}2^{\frac{i}{\alpha_i}}
(1-\sigma)^{-\frac{2}{\alpha_i}}\left(\intav_{B_{r_i}}\big(\delta+\vert \X u\vert^2\big)^{\frac{\alpha_{i}}{2}}\, dx\right)^{\frac{1}{\alpha_{i}}},
\end{aligned}
\end{equation}
where $\alpha_i=(p+2)\kappa^i$. Iterating the above inequality, we end up with
\begin{equation}\label{lip100}
\sup_{B_{\sigma r}} \big(\delta+\vert \X u\vert^2\big)^{\frac{1}{2}}\le c(1-\sigma)^{-\frac{Q}{p+2}}\left(\intav_{B_{r}} \deltaX^{\frac {p+2} {2}}\, dx\right)^{\frac {1} {p+2}},
\end{equation}
for all $B_r\subset \Omega$ and all $0<\sigma<1$, where $c=c(n,p,L)>0$. 
Now the estimate (\ref{lip100}) hold for all balls $B_r\subset \Omega$ and all $0<\sigma<1$.  Another iteration argument (see the proof of Lemma 3.38 in \cite{HKM}) shows that
\begin{equation}\label{lip101}
\sup_{B_{\sigma r}} \big(\delta+ \vert \X u\vert^2\big)^{\frac{1}{2}}\le c(1-\sigma)^{-\frac{Q}{q}}\left(\intav_{B_{r}} \deltaX^{\frac {q} {2}}\, dx\right)^{\frac {1} {q}},
\end{equation}
for any $q>0$, where $c=c(n,p,L,q)>0$. We may let $q=p$. This completes the proof of Theorem \ref{thm:lip}.
\end{proof}

\section{H\"older continuity of the horizontal gradient\label{section:Holder}}

In this section, we assume that $p\ge 2$, and we will prove the
H\"older continuity of the horizontal gradient of solutions of
equation (\ref{equation:main}) for this range of $p$, under the supplementary assumption (\ref{supplementary}). This section
is divided into three subsections. First, we introduce the De
Giorgi's class of functions in the setting of Heisenberg group in
subsection \ref{subsection:DeGoirgi:class}. Second, we show that
the gradient of solutions of equation (\ref{equation:main}) satisfies a Caccioppoli inequality in subsection
\ref{subsection:cacci}, and finally,  we prove Theorem
\ref{thm:holder} in subsection \ref{subsection:proof}.

\subsection{De Giorgi's class of functions}\label{subsection:DeGoirgi:class}

The fundamental work of De Giorgi \cite{De} showed the local
boundedness and H\"older continuity for functions satisfying
certain integral inequalities, nowadays known as De Giorgi's class
of functions. In this section, we consider this class of functions
defined in the Heisenberg group.

Let $B_{\rho_0}\subset {\mathbb H}^n$ be a ball and $k_0\in {\mathbb R}$ be a constant. The De Giorgi's
class $DG^+(B_{\rho_0})$ consists of functions $v\in
HW^{1,2}(B_{\rho_0})\cap L^\infty(B_{\rho_0})$, which satisfy for any
balls $B_{\rho^\prime},B_\rho$ with the same center as
$B_{\rho_0}$ and $0<\rho^\prime<\rho\le \rho_0$, and for any $k\in {\mathbb R}$, the following inequality
\begin{equation}\label{DG+}
\int_{B_{\rho^\prime}}\vert\X (v-k)^+\vert^2\, dx\le
\frac{\gamma}{(\rho-\rho^\prime)^2}
\int_{B_\rho}\vert(v-k)^+\vert^2\, dx+\chi^2\vert
A_{k,\rho}^+\vert^{1-\frac{2}{q}},
\end{equation}
where $ A_{k,\rho}^+=\{ x\in B_\rho: (v(x)-k)^+=\max (v(x)-k,0)>0\}$.
The parameters $\gamma,\chi$ are arbitrary non-negative numbers. We
require that $q>Q=2n+2$, the Hausdorff dimension of ${\mathbb
H}^n$. We do not exclude the case $q=\infty$. Similarly, we define
the class $DG^-(B_{\rho_0})$. If $v\in DG^+(B_{\rho_0})$, then
$-v\in DG^-(B_{\rho_0})$. We denote
$DG(B_{\rho_0})=DG^+(B_{\rho_0})\cap DG^-(B_{\rho_0})$.

The following lemma is true for a bigger class of functions than
$DG^+(B_{\rho_0})$. We denote by $SDG^+(B_{\rho_0})$ the class of
functions $v\in HW^{1,2}(B_{\rho_0})\cap L^\infty(B_{\rho_0})$ satisfying
\begin{equation}\label{SDG+}
\int_{B_{\rho^\prime}}\vert\X (v-k)^+\vert^2\, dx\le \frac{\gamma
M(\rho_0)^2}{(\rho-\rho^\prime)^2}\vert A_{k,\rho}^+\vert
+\chi^2\vert A_{k,\rho}^+\vert^{1-\frac{2}{q}},
\end{equation}
for any $k\in {\mathbb R}$, and for all balls
$B_{\rho^\prime},B_\rho$ with the same center as $B_{\rho_0}$ and
$0<\rho^\prime<\rho\le \rho_0$. Here $M(\rho_0)$ is a positive
constant. Note that (\ref{DG+}) implies (\ref{SDG+}) with
$M(\rho_0)=2\sup_{B_{\rho_0}} \vert v\vert$.

The proof of the following lemma is similar to that of Lemma 6.1
of \cite{LU}. We omit the proof.
\begin{lemma}\label{DGvanish}
For any $b\in (0,1)$,  there exists $\theta=\theta(\gamma,n,q,
b)>0$ such that the following holds: for any function $v\in
SDG^+(B_{\rho_0})$, and for any number $k_0$, the inequality
\[ \vert A_{k_0,\rho_0}^+\vert\le \theta \rho_0^Q\]
implies that
\[ \sup_{B_{\rho_0/2}} v\le k_0+bM(\rho_0),\]
provided that
\[ M(\rho_0)\ge \chi \rho_0^{1-\frac{Q}{q}}.\]
\end{lemma}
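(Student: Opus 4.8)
The plan is to run the standard De Giorgi iteration on the truncated function $(v-k_0)^+$, using the inequality \trif{SDG+} in place of the usual energy estimate. First I would set up the geometric sequence of radii $\rho_j=\frac{\rho_0}{2}+\frac{\rho_0}{2^{j+1}}$, so that $\rho_j\to\rho_0/2$ and $\rho_j-\rho_{j+1}=\rho_0/2^{j+2}$, together with the increasing sequence of levels $k_j=k_0+(1-2^{-j})bM(\rho_0)$, so that $k_j\uparrow k_0+bM(\rho_0)$. Writing $A_j=A_{k_j,\rho_j}^+$ and $y_j=|A_j|/\rho_0^Q$, the goal is to show that the recursion obtained from \trif{SDG+} forces $y_j\to0$, which exactly says $|A_{k_0+bM(\rho_0),\rho_0/2}^+|=0$, i.e. $\sup_{B_{\rho_0/2}}v\le k_0+bM(\rho_0)$.

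The key chain of inequalities is as follows. On the one hand, for $x\in A_{j+1}$ we have $(v-k_j)^+\ge k_{j+1}-k_j=2^{-(j+1)}bM(\rho_0)$, so by Chebyshev and then the Sobolev inequality \trif{sobolev} applied to $(v-k_j)^+\zeta_j$ (with $\zeta_j$ a cut-off between $B_{\rho_{j+1}}$ and $B_{\rho_j}$), one bounds $|A_{j+1}|$ by a constant times $(2^{j}/bM(\rho_0))^{2\kappa}$ times $\big(\int_{B_{\rho_j}}|\X((v-k_j)^+\zeta_j)|^2\big)^{\kappa}|A_j|^{?}$, where $\kappa=Q/(Q-2)$; here one uses H\"older to pass from the $L^{2\kappa}$ norm on $A_{j+1}$ to an $L^2$-type quantity times a power of $|A_j|$. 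On the other hand, \trif{SDG+} estimates $\int_{B_{\rho_j}}|\X(v-k_j)^+|^2$ (and the cut-off contributes $\int|\X\zeta_j|^2|v-k_j|^2\le \gamma'2^{2j}\rho_0^{-2}M(\rho_0)^2|A_j|$ as well, which is the same shape as the first term of \trif{SDG+}) by $\big(\gamma M(\rho_0)^2 2^{2j}\rho_0^{-2}+\chi^2|A_j|^{-2/q}\big)|A_j|$. The hypothesis $M(\rho_0)\ge \chi\rho_0^{1-Q/q}$ is used precisely to absorb the $\chi^2$ term: since $|A_j|\le\rho_0^Q$ one gets $\chi^2|A_j|^{-2/q}\le \chi^2\rho_0^{-2Q/q}\le M(\rho_0)^2\rho_0^{-2}$, so the whole bracket is comparable to $M(\rho_0)^2\rho_0^{-2}2^{2j}$. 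Combining everything, the $M(\rho_0)$ factors cancel and one arrives at a recursion of the form $y_{j+1}\le C\,b^{-2}\,4^{(\kappa+1)j}\,y_j^{1+\varepsilon}$ for some $\varepsilon=\varepsilon(n,q)>0$ coming from the gap between $q$ and $Q$ in the exponent $1-2/q$ of \trif{SDG+} together with the $\kappa$-gain in Sobolev. By the standard fast-geometric-convergence lemma (e.g. Lemma 4.1 in Chapter II of \cite{LU}), if $y_0\le \theta$ with $\theta=(C b^{-2})^{-1/\varepsilon}4^{-(\kappa+1)/\varepsilon^2}$ then $y_j\to0$; this is the claimed $\theta=\theta(\gamma,n,q,b)$.

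I expect the main obstacle to be purely bookkeeping: tracking the exponent $\varepsilon$ and the power of $b$ through the H\"older step so that the resulting recursion genuinely has a superlinear power $1+\varepsilon$ with $\varepsilon>0$, which is where the strict inequality $q>Q$ is essential (if $q=\infty$ one has the cleanest case $\varepsilon=2/Q$). A secondary technical point is the choice and estimation of the cut-off $\zeta_j$ so that the extra term $\int|\X\zeta_j|^2(v-k_j)^2\,dx$ it produces is dominated by the first term on the right of \trif{SDG+} — this works because both are of the form $(\text{const})\,2^{2j}\rho_0^{-2}M(\rho_0)^2|A_j|$. Everything else is the classical De Giorgi scheme transported to $\hh^n$, where the only structural input needed beyond the Euclidean case is the Sobolev inequality \trif{sobolev} with its dimensional exponent $Q=2n+2$; since \trif{SDG+} is stated intrinsically in terms of $\X$ and CC-balls, no further adaptation is required, which is why one can simply refer to the proof of Lemma 6.1 of \cite{LU}.
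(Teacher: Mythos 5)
Your overall scheme --- geometric radii and levels, Chebyshev plus the Sobolev inequality \trif{sobolev} with exponent $2\kappa$, $\kappa=Q/(Q-2)$, fed by \trif{SDG+}, and the fast-geometric-convergence lemma --- is exactly the classical De Giorgi iteration that the paper invokes by referring to Lemma 6.1 of \cite{LU}, so the route is the intended one. However, one step as you wrote it is false, and it is precisely the step where the hypothesis $M(\rho_0)\ge\chi\rho_0^{1-Q/q}$ and the condition $q>Q$ enter. You claim that $|A_j|\le\rho_0^Q$ gives $\chi^2|A_j|^{-2/q}\le\chi^2\rho_0^{-2Q/q}$; since $t\mapsto t^{-2/q}$ is decreasing, this inequality goes the other way, and in fact the term $\chi^2|A_j|^{1-2/q}$ \emph{dominates} the term $\gamma M(\rho_0)^2\rho_0^{-2}4^{j}|A_j|$ when $|A_j|$ is small (their ratio is $(\rho_0^Q/|A_j|)^{2/q}$, bounded below but not above). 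So the $\chi^2$ term cannot be absorbed into the $M^2$ term.

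The correct absorption is the reverse one. Writing $y_j=|A_j|/\rho_0^Q$, the Chebyshev--Sobolev--\trif{SDG+} chain gives, after using $\chi^2\le M(\rho_0)^2\rho_0^{-2+2Q/q}$ and checking that all powers of $\rho_0$ cancel,
\[
y_{j+1}^{1/\kappa}\;\le\;\frac{c\,4^{2j}}{b^{2}}\Big(y_j+y_j^{1-\frac{2}{q}}\Big)\;\le\;\frac{c\,4^{2j}}{b^{2}}\,y_j^{1-\frac{2}{q}},
\]
because $y_j\le c(n)$ and $1-\tfrac{2}{q}<1$; hence $y_{j+1}\le\big(c\,4^{2j}b^{-2}\big)^{\kappa}y_j^{(1-2/q)\kappa}$, and the superlinear exponent is $(1-\tfrac{2}{q})\kappa=1+\tfrac{2(q-Q)}{q(Q-2)}>1$ precisely because $q>Q$. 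This is the recursion to which the fast-convergence lemma applies, yielding $\theta=\theta(\gamma,n,q,b)$ (note the power of $b$ comes out as $b^{-2\kappa}$, not $b^{-2}$ --- harmless). One further point to make explicit: to dominate the cut-off contribution $\int|\X\zeta_j|^2\,((v-k_j)^+)^2\,dx$ by $c\,4^{j}M(\rho_0)^2\rho_0^{-2}|A_j|$ you need the pointwise bound $(v-k_j)^+\le c\,M(\rho_0)$ on $B_{\rho_0}$; this holds in the paper's applications, where $M(\rho_0)$ is comparable to $2\sup_{B_{\rho_0}}|v|$, but your write-up uses it silently.
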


The proof of the following lemma is similar to that of Lemma 6.2
of \cite{LU}. We need only minor modifications. We omit the proof.

\begin{lemma}\label{DGosc}
For any $\tau>0$, there exists $s=s(\gamma,n,q,\tau)>0$ such that
the following holds: for any function $v\in DG^+(B_{\rho_0})$ and
for any number $k_0$, the inequality
\[ \vert A_{k_0,\rho_0/2}^-\vert \ge \tau \rho_0^Q\]
implies that
\[ \sup_{B_{\rho_0/4}} v\le \sup_{B_{\rho_0}}
v-2^{-s}H+\chi\rho_0^{1-\frac{Q}{q}},\]
where
$H=\sup_{B_{\rho_0}}v-k_0$.
\end{lemma}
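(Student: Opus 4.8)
The plan is to run the De Giorgi iteration over a dyadic sequence of levels, exactly as in the proof of Lemma 6.2 of \cite{LU}, the only Heisenberg-specific ingredient being the $L^1$-Poincar\'e inequality on CC-balls. Write $M=\sup_{B_{\rho_0}}v$ and $H=M-k_0$. The value $s=s(\gamma,n,q,\tau)$ will be fixed at the end; granting that, if $H\le 0$ or $\chi\rho_0^{1-Q/q}\ge 2^{-s}H$ then the right-hand side of the asserted inequality is at least $M\ge\sup_{B_{\rho_0/4}}v$ and there is nothing to prove, so we may assume $H>0$ and $\chi\rho_0^{1-Q/q}<2^{-s}H$. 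Introduce $k_j=M-2^{-j}H$ for $j=0,1,\dots,s$, so $k_j\ge k_0$, and set $A_j=\{x\in B_{\rho_0/2}:v(x)>k_j\}$, $\mu_j=|A_j|$; the $A_j$ decrease with $j$. Since $k_j\ge k_0$ we have $B_{\rho_0/2}\setminus A_j\supseteq A^-_{k_0,\rho_0/2}$, hence $|B_{\rho_0/2}\setminus A_j|\ge \tau\rho_0^Q$ for every $j$.

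The engine is the measure-theoretic De Giorgi lemma: for $v\in HW^{1,1}(B_R)$ and $l<k$,
\[
(k-l)\,|\{v>k\}\cap B_R|\le \frac{c\,R\,|B_R|}{|\{v\le l\}\cap B_R|}\int_{\{l<v<k\}\cap B_R}|\X v|\,dx,
\]
which follows by applying the $L^1$-Poincar\'e inequality $\int_{B_R}|w-w_{B_R}|\,dx\le cR\int_{B_R}|\X w|\,dx$ to the truncation $w=\min(\max(v,l),k)$ and bounding $w_{B_R}-l$ and $k-w_{B_R}$ below on $\{v\le l\}$ and on $\{v>k\}$. With $R=\rho_0/2$, $l=k_j$, $k=k_{j+1}$, and using $|B_{\rho_0/2}|\le c\rho_0^Q$ and $|B_{\rho_0/2}\setminus A_j|\ge\tau\rho_0^Q$, together with Cauchy--Schwarz on the gradient integral,
\[
2^{-(j+1)}H\,\mu_{j+1}\le \frac{c\rho_0}{\tau}\Big(\int_{B_{\rho_0/2}}|\X(v-k_j)^+|^2\,dx\Big)^{1/2}(\mu_j-\mu_{j+1})^{1/2}.
\]
Into the first factor I feed the De Giorgi-class inequality \trif{DG+} with $\rho'=\rho_0/2$, $\rho=\rho_0$, $k=k_j$: since $(v-k_j)^+\le M-k_j=2^{-j}H$ on $B_{\rho_0}$, since $|A^+_{k_j,\rho_0}|\le|B_{\rho_0}|\le c\rho_0^Q$, and since $q>Q$ and $\chi\rho_0^{1-Q/q}<2^{-s}H\le 2^{-j}H$, both terms on the right of \trif{DG+} are bounded by $c(\gamma+1)\,4^{-j}H^2\rho_0^{Q-2}$. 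The factor $2^{-j}H$ produced here cancels exactly against the prefactor $2^{-(j+1)}H$ on the left, leaving a recursion with $j$-independent constant,
\[
\mu_{j+1}\le \frac{c(\gamma+1)^{1/2}\rho_0^{Q/2}}{\tau}\,(\mu_j-\mu_{j+1})^{1/2},\qquad j=0,1,\dots,s-1.
\]

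Squaring, using $\mu_s\le\mu_{j+1}$ and telescoping, $s\,\mu_s^2\le \tfrac{c(\gamma+1)\rho_0^{Q}}{\tau^2}(\mu_0-\mu_s)\le \tfrac{c(\gamma+1)}{\tau^2}\rho_0^{2Q}$, so $\mu_s\le c(\gamma+1)^{1/2}\tau^{-1}s^{-1/2}\rho_0^{Q}$. Let $\theta=\theta(\gamma,n,q,\tfrac12)$ be the constant of Lemma \ref{DGvanish} and now choose $s=s(\gamma,n,q,\tau)$ so large that $c(\gamma+1)^{1/2}\tau^{-1}s^{-1/2}\le\theta 2^{-Q}$ (note $\rho_0$ drops out); then $\mu_s\le\theta(\rho_0/2)^Q$. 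The function $w=(v-k_s)^+$ lies in $SDG^+(B_{\rho_0/2})$ with $M(\rho_0/2)=2^{-s}H$: for $k\ge k_s$, \trif{DG+} on $B_{\rho_0/2}$ together with $(v-k)^+\le 2^{-s}H$ gives precisely \trif{SDG+} for $w$, and the remaining levels are handled trivially since $w\ge0$. Since $M(\rho_0/2)=2^{-s}H>\chi\rho_0^{1-Q/q}>\chi(\rho_0/2)^{1-Q/q}$, Lemma \ref{DGvanish} (with $b=\tfrac12$, applied on $B_{\rho_0/2}$ at the initial level $0$ for $w$, where $A^+_{0,\rho_0/2}=A^+_{k_s,\rho_0/2}$) yields $\sup_{B_{\rho_0/4}}(v-k_s)^+\le \tfrac12 2^{-s}H$, i.e. $\sup_{B_{\rho_0/4}}v\le k_s+2^{-s-1}H=M-2^{-s-1}H$. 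This is the assertion with the exponent taken to be $s+1$, and since $\chi\rho_0^{1-Q/q}\ge0$ the stated (weaker) inequality follows a fortiori. The only genuinely non-routine ingredient is the Heisenberg form of the measure-theoretic De Giorgi lemma, i.e. the $L^1$-Poincar\'e inequality on CC-balls; everything else is the standard Ladyzhenskaya--Uraltseva bookkeeping, with the dyadic spacing of the levels chosen precisely so that no constant in the recursion blows up with $j$.
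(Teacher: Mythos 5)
Your proof is correct, and it follows precisely the route the paper itself indicates: the paper omits the proof of Lemma \ref{DGosc}, stating only that it is "similar to that of Lemma 6.2 of \cite{LU}", and your argument is exactly that Ladyzhenskaya--Uraltseva scheme (dyadic levels $k_j=M-2^{-j}H$, the measure-theoretic De Giorgi inequality via the $(1,1)$-Poincar\'e inequality on CC-balls, the $j$-independent recursion obtained by feeding \trif{DG+} into the gradient factor, telescoping, and finally Lemma \ref{DGvanish}) with the correct Heisenberg substitutions. The details all check out, including the reduction to the nontrivial case $\chi\rho_0^{1-Q/q}<2^{-s}H$, the use of the hypothesis $\vert A^-_{k_0,\rho_0/2}\vert\ge\tau\rho_0^Q$ for every level $k_j\ge k_0$, and the verification that $(v-k_s)^+$ satisfies \trif{SDG+} on $B_{\rho_0/2}$.
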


Here we remark that we can relax the assumption $v\in DG^+(B_{\rho_0})$
in Lemma \ref{DGosc}. We only need to assume that $v$ satisfies inequality
(\ref{DG+}) for all $k\ge k_0$, where $k_0$ is as in Lemma \ref{DGosc}. 
The reason is that in the proof of Lemma \ref{DGosc} we only apply 
inequality (\ref{DG+}) for $k\ge k_0$. 

The following theorem follows easily from Lemma \ref{DGosc}. From
it follows the H\"older continuity of functions in the De Giorgi
class $DG(B_{\rho_0})$.

\begin{theorem}\label{DGHolder}
There exists $s_0=s_0(\gamma,n,q)>0$ such that for any function
$v\in DG(B_{\rho_0})$, we have
\[ \osc_{B_{\rho_0/2}} v\le (1-2^{-s_0})\osc_{B_{\rho_0}}
v+\chi\rho_0^{1-\frac{Q}{q}}.\]
\end{theorem}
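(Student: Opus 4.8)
The plan is to deduce Theorem~\ref{DGHolder} from Lemma~\ref{DGosc} by a standard dichotomy argument on level sets. Fix $v\in DG(B_{\rho_0})$ and abbreviate $\omega=\osc_{B_{\rho_0}} v$, $\mu^+=\sup_{B_{\rho_0}} v$, $\mu^-=\inf_{B_{\rho_0}} v$. The natural choice is the midpoint level $k_0=\tfrac12(\mu^++\mu^-)$, so that $\mu^+-k_0=k_0-\mu^-=\tfrac12\omega$. Consider the two sets $A^-_{k_0,\rho_0/2}=\{x\in B_{\rho_0/2}: v(x)<k_0\}$ and $A^+_{k_0,\rho_0/2}=\{x\in B_{\rho_0/2}: v(x)>k_0\}=\{x\in B_{\rho_0/2}: (-v(x)-(-k_0))>0\}$. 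Since these two sets (up to the null set $\{v=k_0\}$) partition $B_{\rho_0/2}$, at least one of them has measure $\ge \tfrac12\lvert B_{\rho_0/2}\rvert\ge \tau\rho_0^Q$ for an appropriate dimensional constant $\tau=\tau(n)>0$ (using that $\lvert B_\rho\rvert$ is comparable to $\rho^Q$ in ${\mathbb H}^n$).

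First I would treat the case $\lvert A^-_{k_0,\rho_0/2}\rvert\ge \tau\rho_0^Q$. Applying Lemma~\ref{DGosc} with this $\tau$ to $v$ directly, with $H=\mu^+-k_0=\tfrac12\omega$, yields
\[
\sup_{B_{\rho_0/4}} v\le \mu^+-2^{-s}\cdot\tfrac12\omega+\chi\rho_0^{1-\frac{Q}{q}},
\]
where $s=s(\gamma,n,q,\tau)$. Since $\inf_{B_{\rho_0/4}} v\ge \mu^-$, we get $\osc_{B_{\rho_0/4}} v\le \omega-2^{-s-1}\omega+\chi\rho_0^{1-\frac Qq}=(1-2^{-s-1})\omega+\chi\rho_0^{1-\frac Qq}$. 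In the complementary case $\lvert A^+_{k_0,\rho_0/2}\rvert\ge \tau\rho_0^Q$, I apply the same lemma to the function $-v\in DG^+(B_{\rho_0})$: note $\{-v-(-k_0)<0\}=\{v>k_0\}=A^+_{k_0,\rho_0/2}$, which plays the role of the set $A^-$ in Lemma~\ref{DGosc} for $-v$, and here $H=\sup_{B_{\rho_0}}(-v)-(-k_0)=k_0-\mu^-=\tfrac12\omega$. This gives $\sup_{B_{\rho_0/4}}(-v)\le -\mu^--2^{-s-1}\omega+\chi\rho_0^{1-\frac Qq}$, i.e. $\inf_{B_{\rho_0/4}} v\ge \mu^-+2^{-s-1}\omega-\chi\rho_0^{1-\frac Qq}$, and again $\osc_{B_{\rho_0/4}} v\le(1-2^{-s-1})\omega+\chi\rho_0^{1-\frac Qq}$.

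In both cases we obtain $\osc_{B_{\rho_0/4}} v\le(1-2^{-s-1})\osc_{B_{\rho_0}} v+\chi\rho_0^{1-\frac Qq}$, with $s$ depending only on $\gamma,n,q$ through the fixed dimensional $\tau$. To pass from $B_{\rho_0/4}$ to $B_{\rho_0/2}$ as stated, I would either simply apply the above to the ball $B_{2\rho_0}$ in place of $B_{\rho_0}$ when it is available, or — cleanly — observe that $\osc_{B_{\rho_0/2}} v\le \osc_{B_{\rho_0}} v$ trivially fails to give the decay, so instead rerun the dichotomy directly at scale $\rho_0$ with $\rho_0/2$ as the smaller radius: Lemma~\ref{DGosc} is stated with the pair $(\rho_0/2,\rho_0/4)$, but its proof (via Lemma~6.2 of \cite{LU}) works verbatim with the pair $(\rho_0,\rho_0/2)$ after trivial rescaling of the radii, so one gets the conclusion on $B_{\rho_0/2}$ with the measure hypothesis tested on $B_{\rho_0}$; one then sets $s_0=s+1$ (absorbing constants). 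The only genuine point requiring care is the bookkeeping of the additive term $\chi\rho_0^{1-\frac Qq}$ and making sure the constant $2^{-s_0}$ does not degenerate — but since $\tau$ is a fixed dimensional constant, $s=s(\gamma,n,q,\tau)$ is a fixed function of $\gamma,n,q$ alone, so $s_0=s_0(\gamma,n,q)$ as claimed. I do not expect any serious obstacle here; the argument is the classical De Giorgi oscillation-decay step, and the only adaptation to ${\mathbb H}^n$ is the volume comparison $\lvert B_\rho\rvert\asymp\rho^Q$, which is standard.
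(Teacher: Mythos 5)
Your argument is correct and is exactly the dichotomy the paper intends (it gives no details, saying only that the theorem "follows easily from Lemma \ref{DGosc}"): split at the midpoint level $k_0$, note one of the two level sets in the half ball occupies at least half its measure, apply Lemma \ref{DGosc} to $v$ or to $-v$ accordingly, and adjust the radii in that lemma (whose proof is insensitive to the particular pair $\rho_0/2,\rho_0/4$) so the conclusion lands on $B_{\rho_0/2}$. The only blemish is a sign slip in your first paragraph, where you write $A^+_{k_0,\rho_0/2}=\{(-v)-(-k_0)>0\}$ instead of $\{(-v)-(-k_0)<0\}$; you use the correct identification later, so nothing is affected.
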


\subsection{Caccioppoli inequality}\label{subsection:cacci} In this section, we
assume that $p\ge 2$. Let $u\in HW^{1,p}(\Omega)$ be a solution of
equation (\ref{equation:main}) satisfying the structure condition (\ref{structure}) with $\delta>0$. 
As in Section \ref{section:lip}, we make the supplementary assumption (\ref{supplementary}) that 
$u$ is Lipschitz continuous in $\Omega$. Thus we have the regularity (\ref{apregularity}) for $u$. Keeping this in our mind,
we will show that $X_l u,
l=1,2,\ldots, 2n$, satisfies the Caccioppoli inequality
(\ref{cacci:Xu:k}) in the following lemma. The proof of the lemma
is based on the estimate of $Tu$ in Corollary \ref{cor:Tu:high}. It
also involves an iteration argument.

We fix a ball $B_{r_0}\subset \Omega$. We denote by
\[ \mu(r_0)=\max_{1\le l\le 2n}\sup_{B_{r_0}} \vert X_l u\vert.\]
Consider the balls $B_{r^\prime}, B_r$ with the same center as
$B_{r_0}$ and $0<r^\prime<r\le r_0/2$. Denote as before
$A^+_{k,r}=\{ x\in B_r: (u(x)-k)^+>0\}$.

\begin{lemma}\label{lemma:cacci:k}
For any $q\ge 4$, there exists $c=c(n,p,L,q)>0$ such that the
following inequality holds for any $1\le l\le 2n$, for any $k\in {\mathbb R}$ and for any
$0<r^\prime<r\le r_0/2$
\begin{equation}\label{cacci:Xu:k}
\begin{aligned}
&\int_{A^+_{k,r^\prime}} \weight\vert \X X_l u\vert^2\, dx\\
\le&
\frac{c}{(r-r^\prime)^2}\int_{A^+_{k,r}}\weight\vert(X_lu-k)^+\vert^2\,
dx +c K\vert A^+_{k,r}\vert^{1-\frac{2} {q}}
\end{aligned}
\end{equation}
where $K =r_0^{-2}\vert
B_{r_0}\vert^{\frac 2 q}(\delta+\mu(r_0)^2)^{\frac{p} {2}}$.
\end{lemma}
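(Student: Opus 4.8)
The plan is to test the equation \eqref{equation:horizontal} (or \eqref{equation:horizontal2}, according to whether $1\le l\le n$ or $n<l\le 2n$) for $v_l=X_lu$ with the standard Caccioppoli test function $\varphi=\eta^2(X_lu-k)^+$, where $\eta\in C_0^\infty(B_r)$ is a cut-off with $\eta\equiv 1$ on $B_{r'}$ and $|\X\eta|\le c/(r-r')$. Writing $w=(X_lu-k)^+$, the left-hand side produces, via the lower ellipticity bound in \eqref{structure}, the good term $\int_{A^+_{k,r'}}\weight|\X X_lu|^2\,dx$ (up to the usual absorption). The right-hand side splits into three groups of terms: (i) the ``horizontal'' term $\int \weight\,|\X X_lu|\,|\X\eta|\,\eta\,w\,dx$, handled by Young's inequality and absorption into the good term, contributing $\frac{c}{(r-r')^2}\int_{A^+_{k,r}}\weight\,w^2\,dx$; (ii) the terms carrying one factor of $Tu$ coming from the $\sum_i X_i\big(D_{n+l}D_if(\X u)Tu\big)$ piece; and (iii) the term $T\big(D_{n+l}f(\X u)\big)$ tested against $\eta^2 w$, which after integration by parts in $T$ becomes $\int D_{n+l}f(\X u)\,T(\eta^2 w)\,dx$, again producing factors of $Tu$ (through $Tw$ and through $T\eta$) as well as factors of $\X\X u$.

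The crux is therefore to control all the terms involving $Tu$ by the quantity $cK|A^+_{k,r}|^{1-2/q}$ with $K=r_0^{-2}|B_{r_0}|^{2/q}(\delta+\mu(r_0)^2)^{p/2}$. Here I would use the pointwise bounds $|\X u|\le \mu(r_0)$ on $B_{r_0}$ and $\weight\le c(\delta+\mu(r_0)^2)^{(p-2)/2}$, together with $|Df(\X u)|\le L(\delta+|\X u|^2)^{(p-1)/2}\le c(\delta+\mu(r_0)^2)^{(p-1)/2}$, to reduce each of these terms to an integral of the form $c(\delta+\mu(r_0)^2)^{p/2}\big(\text{lower powers of }|Tu|,|\X Tu|,|\X\X u|\big)$ over $A^+_{k,r}\subset B_{r_0/2}$. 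The key input is Corollary \ref{cor:Tu:high} (equivalently Corollary \ref{cor1} together with Theorem \ref{thm:Xu:cacci}), which gives, for any $\beta\ge 2$ and a cut-off $\zeta$ between $B_{r_0/2}$ and $B_{r_0}$,
\[
\int_{B_{r_0/2}}\weight|Tu|^{\beta+2}\,dx\le c(\beta)\,\big(r_0^{-2}(\delta+\mu(r_0)^2)\big)^{\frac{\beta+2}{2}}|B_{r_0}|,
\]
after estimating $\weight\le c(\delta+\mu(r_0)^2)^{(p-2)/2}$ and $K=\||\X\zeta|\|_\infty^2+\|\zeta T\zeta\|_\infty\le c r_0^{-2}$ (using that $T$ of a function depending on $x$ at scale $r_0$ is of order $r_0^{-1}$ in the CC-metric, so $\|\zeta T\zeta\|_\infty\le c r_0^{-2}$). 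Thus every $L^m$ norm of $Tu$ that appears, for any fixed $m$, is bounded by $c\,r_0^{-1}(\delta+\mu(r_0)^2)^{1/2}|B_{r_0}|^{1/m}$ times lower-order weight factors. Similarly, Lemma \ref{caccioppoli:T} controls the $\X Tu$ terms, and the $\X\X u$ terms are controlled by the already-established Caccioppoli inequality (\ref{Xu:good}) of Theorem \ref{thm:Xu:cacci} in the form $\int_{B_{r_0/2}}\weight|\X\X u|^2\,dx\le c\,r_0^{-2}(\delta+\mu(r_0)^2)^{p/2}|B_{r_0}|$. Applying Hölder's inequality on the set $A^+_{k,r}$ to convert each such $L^\infty$-type bound into the exponent $1-2/q$ (here one uses $q\ge 4$ to have enough integrability of $Tu$ available from Corollary \ref{cor:Tu:high}, which supplies arbitrarily high powers), every term in group (ii) and group (iii) is bounded by
\[
c\,(\delta+\mu(r_0)^2)^{\frac p2}\,r_0^{-2}\,|B_{r_0}|^{\frac2q}\,|A^+_{k,r}|^{1-\frac2q}=cK\,|A^+_{k,r}|^{1-\frac2q}.
\]

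The iteration argument alluded to in the statement enters because the horizontal term in group (i) and the absorption of $|\X X_lu|$-terms force a dependence on $r-r'$ that, if handled crudely, would also multiply the $Tu$-contributions by negative powers of $r-r'$; to keep the clean form (\ref{cacci:Xu:k}) one runs the standard iteration lemma (interpolation between radii $r'$ and $r$, as in Giaquinta's ``hole-filling''/iteration lemma) to absorb the terms with $|\X\X u|$ multiplied by small constants while only paying $(r-r')^{-2}$ on the $\int\weight\,w^2$ term and a fixed constant on the $K|A^+_{k,r}|^{1-2/q}$ term. The main obstacle I expect is bookkeeping in group (iii): after integrating $T$ by parts one must carefully track which terms land on $T\eta$ (where the factor $\mu(r_0)$ and a factor $(r-r')^{-1}$ or $r_0^{-1}$ appear) versus on $Tw=T X_l u$ (which is itself an entry of $\X\X u$ up to commutators, so $|Tw|\le 2|\X\X u|$), and to ensure every resulting product, after Young's inequality, either absorbs into the good term $\int\weight|\X X_lu|^2$ or reduces — via Corollary \ref{cor:Tu:high} and Theorem \ref{thm:Xu:cacci} on the fixed ball $B_{r_0/2}$ — to the homogeneous quantity $K|A^+_{k,r}|^{1-2/q}$; the factor $(\beta+1)$-type constants from those corollaries are harmless since $\beta$ is a fixed number determined by $q$.
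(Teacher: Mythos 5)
Your setup (testing (\ref{equation:horizontal}) with $\varphi=\eta^2(X_lu-k)^+$) and your treatment of the first two groups of terms match the paper, and the handling of the pure $Tu$-term via Corollary \ref{cor:Tu:high} and H\"older is exactly right. The gap is in the third group. After the structure condition is applied, the dangerous term is
\begin{equation*}
I_3=c\int_{B_r}\eta^2\weight\vert\X Tu\vert\, v\, dx ,\qquad v=(X_lu-k)^+ ,
\end{equation*}
and your proposal offers no mechanism that actually controls it with the exponent $1-\tfrac 2q$ for $q>4$. Corollary \ref{cor:Tu:high} supplies arbitrarily high integrability of $Tu$, but \emph{not} of $\X Tu$: Lemma \ref{caccioppoli:T} only yields a weighted $L^2$ bound on $\X Tu$ (against powers of $Tu$), so the best you get from Cauchy--Schwarz is
$I_3\le \big(\int\eta^2\weight\vert\X Tu\vert^2\big)^{1/2}\big(\int_{A^+_{k,r}}\weight v^2\big)^{1/2}\lesssim K\vert A^+_{k,r}\vert^{1/2}$,
i.e.\ the exponent $\tfrac12=1-\tfrac24$. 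That suffices only for $q=4$, whereas the lemma is needed (and used in Section \ref{subsection:proof}) with $q=2Q>4$. Relatedly, your claim that $Tw=TX_lu$ is ``an entry of $\X\X u$ up to commutators, so $\vert Tw\vert\le 2\vert\X\X u\vert$'' is false: $TX_lu=X_lTu$ is a \emph{third}-order horizontal derivative of $u$, controlled by $\vert\X Tu\vert$, not by $\vert\X\X u\vert$.

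The missing idea is the paper's iteration \emph{on the power of $Tu$}, not an interpolation between radii (the Giaquinta-type radius iteration you invoke is not what the paper's ``iteration argument'' refers to, and no absorption between radii is needed here). One tests the equation (\ref{equation:T}) for $Tu$ with $\tilde\varphi=\eta^2v^2\vert Tu\vert^{\beta}Tu$ to get, with $a_m=\int\eta^2\weight v^2\vert Tu\vert^{2^m-2}\vert\X Tu\vert^2\,dx$, the recursion $a_m\le cM^{1/2}a_{m+1}^{1/2}$, where $M$ is the sum of the good term and the first right-hand term. Iterating gives $a_1\le (c^2M)^{1-2^{-m}}a_{m+1}^{2^{-m}}$, and $a_{m+1}$ is bounded (after dropping $v^2\le c\,\mu(r_0)^2$) by Lemma \ref{caccioppoli:T} and Corollary \ref{cor:Tu:high} on the fixed ball $B_{r_0}$. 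Feeding this into $I_3\le c\,a_1^{1/2}(\delta+\mu(r_0)^2)^{(p-2)/4}\vert A^+_{k,r}\vert^{1/2}$ and applying Young's inequality yields $I_3\le \tfrac M2+c(m)K\vert A^+_{k,r}\vert^{2^m/(2^m+1)}$, and choosing $m$ with $2^m/(2^m+1)\ge 1-\tfrac 2q$ gives the stated exponent. Without this step (or an equivalent device raising the exponent on $\vert A^+_{k,r}\vert$ beyond $\tfrac12$), the proof does not close.
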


\begin{proof} We only prove the lemma for $l=1,2,\cdots,n$; we can prove similarly the lemma 
for $l=n+1,n+2,\cdots, 2n$. We observe that we only need to prove 
(\ref{cacci:Xu:k}) for $k$ such that $\vert k\vert \le \mu(r_0)$.
Now fix $l\in \{1,2,\ldots,n\}$, and $k\in {\mathbb R}$ such that
$\vert k\vert \le \mu(r_0)$ and fix $0<r^\prime<r\le
r_0/2$. Let $\varphi=\eta^2 v$, where $v=(X_lu-k)^+$ and $\eta\in
C^\infty_0(B_r)$ is a cut-off function such that $\eta=1$ in $B_{r^\prime}$ and $\vert \X \eta\vert\le 2/(r-r^\prime)$. We use $\varphi$ as a
test-function in (\ref{equation:horizontal}) to obtain that
\begin{equation*}
\begin{aligned}
\int_{B_r} \sum_{i,j}&\eta^2D_jD_i f(\Xu)X_jX_l uX_iv\, dx\\
=&-2\int_{B_r}\sum_{i,j}\eta v D_jD_i f(\Xu)X_jX_l uX_i \eta\, dx\\
&-\int_{B_r} \sum_i D_{n+l}D_i f(\Xu)TuX_i(\eta^2 v)\, dx\\
&+\int_{B_r} \eta^2 T(D_{n+l}f(\Xu)) v\, dx
\end{aligned}
\end{equation*}
By Young's inequality and the structure condition
(\ref{structure}), we arrive at
\begin{equation}\label{estadd1}
\begin{aligned}
\int_{B_r}\eta^2\weight\vert \X v\vert^2\, dx \le &
c\int_{B_r}\vert
\X\eta\vert^2\weight v^2\, dx\\
&+c\int_{ A^+_{k,r}}\eta^2 \weight\vert Tu\vert^2\, dx\\
&+c\int_{B_r}\eta^2\weight\vert\X Tu\vert v\, dx\\
&=I_1+I_2+I_3.
\end{aligned}
\end{equation}
Observe that to prove lemma, we only need to estimate $I_2$ and
$I_3$. We estimate $I_2$ as follows. By H\"older's inequality,
\[ I_2\le \left(\int_{B_{r_0/2}} \weight\vert Tu\vert^q\,
dx\right)^{\frac 2 q}\left(\int_{A_{k,r}^+}\weight\,
dx\right)^{1-\frac{2}{q}}.\] 
Recall that Corollary
\ref{cor:Tu:high} gives
\[ \int_{B_{r_0/2}}\weight\vert Tu\vert^q\, dx\le
cr_0^{-q}\int_{B_{r_0}}\big(\delta+\vert\X
u\vert^2\big)^{\frac{p-2+q}{2}}\, dx,\] 
where $c=c(n,p,L,q)>0$. Note that $p\ge 2$. Thus $I_2$ is bounded from above by
the last term of inequality (\ref{cacci:Xu:k}), 
\begin{equation}\label{estadd2} I_2\le c r_0^{-2}\vert
B_{r_0}\vert^{\frac 2 q}(\delta+\mu(r_0)^2)^{\frac{p} {2}}\vert
A^+_{k,r}\vert^{1-\frac 2 q}=cK\vert A_{k,r}^+\vert^{1-\frac{2}{q}},
\end{equation} 
where $c=c(n,p,L,q)>0$. 

The estimate
of $I_3$ is involved. We will show that
\begin{equation}\label{estadd3}
 I_3 \le \frac{ M}{2}+cK\vert A_{k,r}^+\vert^{1-\frac{2}{q}},
\end{equation}
where $c=c(n,p,L,q)>0$ and
\begin{equation}\label{addM}
M=\int_{B_r}\eta^2\weight\vert \X v\vert^2\, dx+\int_{B_r}\vert\X \eta\vert^2\weight v^2\, dx. 
\end{equation}

Now the estimates (\ref{estadd1}), (\ref{estadd2}) and (\ref{estadd3})
give us
\[ \int_{B_r}\eta^2\weight\vert \X v\vert^2\, dx \le 
c\int_{B_r}\vert
\X\eta\vert^2\weight v^2\, dx+cK\vert A_{k,r}^+\vert^{1-\frac{2}{q}},\]
from which (\ref{cacci:Xu:k}) follows. This proves the lemma, modulo the proof of (\ref{estadd3}).

Now we prove (\ref{estadd3}). First, for $\beta\ge 0$, let
$\tilde\varphi=\eta^2 v^2\vert Tu\vert^\beta Tu$, where $\eta$ is
as before. We test equation (\ref{equation:T}) with
$\tilde\varphi$ to obtain that
\begin{equation*}
\begin{aligned}
&(\beta+1)\int_{B_r}\eta^2\weight v^2\vert Tu\vert^\beta\vert \X Tu\vert^2\, dx\\
\le & c\int_{B_r}\eta\vert\X \eta\vert\weight v^2\vert Tu\vert^{\beta+1}\vert\X Tu\vert\, dx\\
&+c\int_{B_r}\eta^2\weight v\vert Tu\vert^{\beta+1}\vert\X v\vert\vert\X Tu\vert\, dx,
\end{aligned}
\end{equation*}
where $c=c(n,p,L,q)>0$.
By Young's inequality, we obtained that
\begin{equation}\label{Tu:iteration}
\begin{aligned}
\int_{B_r}&\eta^2 \weight v^2  \vert Tu\vert^\beta\vert\X Tu\vert^2\, dx\\
& \le cM^{\frac{1}{2}}\left(\int_{B_r}\eta^2\weight v^2\vert Tu\vert^{2\beta+2}\vert\X Tu\vert^2\, dx\right)^{\frac 1 2},
\end{aligned}
\end{equation}
where $M$ is defined as in (\ref{addM}).

Second, we iterate (\ref{Tu:iteration}) as follows.
Let $\beta_m=2^m-2, m=1,2,\ldots$. Set
\[a_m=\int_{B_r}\eta^2\weight v^2\vert Tu\vert^{\beta_m}\vert\X Tu\vert^2\, dx.\]
Then (\ref{Tu:iteration}) gives us
\begin{equation}\label{est:a1}
a_1\le cM^{\frac{1}{2}} a_2^{\frac 1 2}\le \cdots\le (cM^{\frac{1}{2}})^{\sum_{i=0}^{m-1} 2^{-i}}a_{m+1}^{2^{-m}}=
(c^2M)^{1-2^{-m}} a_{m+1}^{2^{-m}}.
\end{equation}
We then estimate $a_{m+1}$
as follows. Note that $\vert k\vert\le \mu(r_0)$. Thus
\[ a_{m+1}\le c\mu(r_0)^2\int_{B_{r_0/2}}
\weight \vert Tu\vert^{\beta_{m+1}}\vert\X Tu\vert^2\, dx.\]
We continue to estimate by Lemma
\ref{caccioppoli:T} and Corollary \ref{cor:Tu:high} the integral in the right hand side. We have that
\begin{equation*}
\begin{aligned}
\int_{B_{r_0/2}}\weight \vert Tu\vert^{\beta_{m+1}}\vert\X Tu\vert^2\, dx
\le & cr_0^{-2}\int_{B_{3r_0/4}}\weight\vert Tu\vert^{\beta_{m+1}+2}\, dx\\
\le & c(m) r_0^{-\beta_{m+1}-4}\int_{B_{r_0}}\big(\delta+\vert\X u\vert^2\big)^{\frac{p+\beta_{m+1}}{2}},
\end{aligned}
\end{equation*}
where $c(m)>0$ depends not only on $n,p,L$, but also on $m$. 
Thus we arrive at the following estimate for $a_{m+1}$.
\begin{equation}\label{est:am}
a_{m+1}\le c(m)(\delta+\mu(r_0)^2)^{\frac{p+2^{m+1}}{2}}\vert B_{r_0}\vert r_0^{-2^{m+1}-2}.
\end{equation}

Finally, the estimate (\ref{estadd3}) for $I_3$ follows from the ones (\ref{est:a1}), (\ref{est:am}) for $a_1$ and $a_{m+1}$.
By H\"older's inequality,
\begin{equation*}
\begin{aligned}
I_3 \le & c\left(\int_{B_r}\eta^2\weight\vert\X Tu\vert^2 v^2\, dx\right)^{\frac 1 2}
\left(\int_{A_{k,r}^+} \eta^2\weight\, dx\right)^{\frac 1 2}\\
\le & c a_1^{\frac 1 2}(\delta+\mu(r_0)^2)^{\frac{p-2}{4}}\vert A_{k,r}^+\vert^{\frac 1 2}.
\end{aligned}
\end{equation*}
Here we also used the fact that $p\ge 2$. 
Now combining the above estimate with (\ref{est:a1}), we arrive at
\begin{equation*}
\begin{aligned}
I_3\le &c(m)(\delta+\mu(r_0)^2)^{\frac{p-2}{4}}\vert A_{k,r}^+\vert^{\frac{1}{2}}a_{m+1}^{2^{-m-1}}M^{\frac{1}{2}(1-2^{-m})}\\
\le & \frac{M}{2}+c(m)(\delta+\mu(r_0)^2)^{\frac{p-2}{2}\frac{2^m}{2^m+1}}
\vert A_{k,r}^+\vert^{\frac{2^m}{2^m+1}}a_{m+1}^{\frac{1}{2^m+1}},
\end{aligned}
\end{equation*}
where in the second inequality we used Young's inequality. Then we plug 
the estimate (\ref{est:am}) to the above inequality, and we obtain that
\begin{equation}\label{est:a1am} I_3
\le \frac{M}{2}+c(m)(\delta+\mu(r_0)^2)^{\frac p 2} \vert
B_{r_0}\vert^{\frac{1}{2^m+1}}r_0^{-2}\vert
A_{k,r}^+\vert^{\frac{2^m}{2^m+1}}.\end{equation} Now we may
choose $m$ big enough such that
\[\frac{2^m}{2^m+1}\ge 1-\frac{2}{q}.\]
Then (\ref{est:a1am}) becomes
\[ I_3\le \frac{M}{2}+c K \vert A_{k,r}^+\vert^{1-\frac{2}{q}},\]
where $c=c(n,p,L,q)>0$. This proves (\ref{estadd3}).
We finished the proof of  (\ref{cacci:Xu:k}).
\end{proof}

\begin{remark}\label{remarkadd}
Similarly, 
we can obtain the corresponding inequality (\ref{cacci:Xu:k}) 
for $(X_lu-k)^-$ with 
$(X_lu-k)^+$ replaced by $(X_lu-k)^-$ and 
$A_{k,r}^+$ replaced by $A^-_{k,r}$. 
\end{remark}

\subsection{Proof of Theorem
\ref{thm:holder}}\label{subsection:proof}

In the subsection, we prove Theorem \ref{thm:holder} for the case $\delta>0$.
Let $u\in HW^{1,p}(\Omega)$ be a weak solution of equation (\ref{equation:main}). We assume (\ref{supplementary}).
We fix a ball $B_{r_0}\subset\Omega$. For all balls
$B_r, r\le r_0$, with the same center as $B_{r_0}$, we denote for $l=1,\ldots,2n$,
\[ \mu_l(r)=\sup_{B_r} \vert X_l u\vert,\quad \mu(r)=\max_{1\le l\le 2n} \mu_l(r),\]
and 
\[ \omega_l(r)=\osc_{B_r} X_l u, \quad \omega(r)=\max_{1\le l\le 2n} \omega_l(r).\]

Theorem \ref{thm:holder}, under the additional assumption (\ref{supplementary}), follows easily from Theorem \ref{thm:holder2} below by an iteration argument.

\begin{theorem}\label{thm:holder2}
There exists a constant $s=s(n,p,L)\ge
1$ such that for any $0<r\le
r_0/16$, we have
\begin{equation}\label{mur4r}
\omega(r)\le (1-2^{-s})\omega(8r)+2^s(\delta+\mu(r_0)^2)^{\frac 1
2}\left(\frac{r}{r_0}\right)^{\frac 1 p}.
\end{equation}
\end{theorem}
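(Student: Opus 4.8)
The plan is to obtain an oscillation–decay estimate for each component $X_l u$ by showing that, on a fixed ball $B_{r_0}$, the functions $X_l u$ (suitably rescaled) belong to a De~Giorgi class, and then to invoke the De~Giorgi machinery of Section~\ref{subsection:DeGoirgi:class}. The Caccioppoli inequality \eqref{cacci:Xu:k} of Lemma~\ref{lemma:cacci:k} is exactly of the form \eqref{DG+}, once we pass from the weight $\weight = \deltaX^{\frac{p-2}{2}}$ to constants. Since $p\ge 2$, we have $\deltaX^{\frac{p-2}{2}}\ge (\delta+\mu(r_0)^2)^{\frac{p-2}{2}}$ only when the gradient is comparable to $\mu(r_0)$, so the correct move is a dichotomy: either $|X_l u|$ is close to $\mu(r_0)$ on a large portion of the ball, in which case the weight is bounded below by a fixed positive constant times $(\delta+\mu(r_0)^2)^{\frac{p-2}{2}}$ on the relevant superlevel set and \eqref{cacci:Xu:k} becomes a genuine De~Giorgi inequality for $(X_lu-k)^+$ with $\chi^2 \sim r_0^{-2}(\delta+\mu(r_0)^2)^{p/2}|B_{r_0}|^{2/q}$ and $\gamma \sim (\delta+\mu(r_0)^2)$; or else $|X_l u|\le \tfrac12\mu(r_0)$ (say) on most of the ball, which already forces a fixed proportion of oscillation decay for that component directly.

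The key steps, in order. First, fix $l$ and the ball $B_{8r}$ with $8r\le r_0/2$. Normalise: set $w = X_l u$, $M_{8r}=\sup_{B_{8r}} w$, $m_{8r}=\inf_{B_{8r}} w$, $\omega_l(8r)=M_{8r}-m_{8r}$. Run the standard De~Giorgi alternative with the midpoint level $k_0 = \tfrac12(M_{8r}+m_{8r})$: one of the sets $A^+_{k_0,4r}$ or $A^-_{k_0,4r}$ has measure $\le \tfrac12|B_{4r}|$. Say it is $A^+_{k_0,4r}$. Second, observe that on $B_{8r}$ we have $|\Xu|^2 = \sum_{j}(X_j u)^2 \le 2n\,\mu(r_0)^2$, so the weight is bounded above by $c(\delta+\mu(r_0)^2)^{\frac{p-2}{2}}$; for the lower bound needed to absorb the gradient term, restrict to the superlevel sets $A^+_{k,r'}$ with $k\ge k_0 + \tfrac14\omega_l(8r)$, on which — after the further alternative that $\mu(r_0)\le c\,\omega(8r)$, the easy case — one gets $|\Xu|\ge |w|=|X_lu| \ge k \ge c\,\mu(r_0)$ and hence the weight is $\ge c(\delta+\mu(r_0)^2)^{\frac{p-2}{2}}$. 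With this, divide \eqref{cacci:Xu:k} through by $(\delta+\mu(r_0)^2)^{\frac{p-2}{2}}$: the function $v=(X_lu-k)^+$ satisfies, for all $k$ in the relevant range,
\begin{equation*}
\int_{B_{r'}}|\X v|^2\,dx \le \frac{c}{(r-r')^2}\int_{B_r} v^2\,dx + c\,r_0^{-2}(\delta+\mu(r_0)^2)\,|B_{r_0}|^{\frac2q}|A^+_{k,r}|^{1-\frac2q},
\end{equation*}
which is \eqref{DG+} on $B_{8r}$ with $\gamma = c$, $\chi^2 = c\,r_0^{-2}(\delta+\mu(r_0)^2)|B_{r_0}|^{2/q}$ and exponent $q>Q$. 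As noted in the remark following Lemma~\ref{DGosc}, it suffices that \eqref{DG+} hold for $k\ge k_0$, so we may apply Lemma~\ref{DGosc} (and its $DG^-$ analogue, using Remark~\ref{remarkadd}) to $w$ on $B_{8r}$.

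Third, feed this into Lemma~\ref{DGosc} with $\tau$ a fixed number (coming from $|A^-_{k_0,4r}|\ge \tfrac12|B_{4r}|\ge \tau(8r)^Q$): there is $s=s(n,p,L)\ge 1$ with
\begin{equation*}
\sup_{B_{2r}} w \le \sup_{B_{8r}} w - 2^{-s}\bigl(\sup_{B_{8r}}w - k_0\bigr) + \chi\,(8r)^{1-\frac Qq} = M_{8r} - 2^{-s-1}\omega_l(8r) + \chi\,(8r)^{1-\frac Qq}.
\end{equation*}
Since $\chi\,(8r)^{1-Q/q} = c\,(8r)^{1-Q/q}r_0^{-1}(\delta+\mu(r_0)^2)^{1/2}r_0^{Q/q} \le c\,(\delta+\mu(r_0)^2)^{1/2}(r/r_0)^{1-Q/q}$ and $1-Q/q$ can be taken $\ge 1/p$ by choosing $q$ large, this reads $\omega_l(2r)\le (1-2^{-s-1})\omega_l(8r) + c(\delta+\mu(r_0)^2)^{1/2}(r/r_0)^{1/p}$. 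Taking the maximum over $l=1,\dots,2n$ and adjusting $s$ gives \eqref{mur4r} (with $2r$ in place of $r$; rename $r$). In the complementary "easy'' case $\mu(r_0)\le c\,\omega(8r)$ one argues instead that the superlevel set is empty or small enough that Lemma~\ref{DGvanish} gives $\sup_{B_{4r}} w \le k_0 + b\,\mu(r_0)$ outright, again yielding a fixed fraction of decay; one checks the hypotheses $M(\rho_0)\ge \chi\rho_0^{1-Q/q}$ hold after possibly enlarging $\chi$.

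The main obstacle is the bookkeeping around the weight $\deltaX^{\frac{p-2}{2}}$: it is not a constant, and the lower bound one needs to convert \eqref{cacci:Xu:k} into the genuinely linear De~Giorgi inequality \eqref{DG+} is only available on superlevel sets where $|X_lu|$ is a definite fraction of $\mu(r_0)$. Handling this cleanly requires the two-case split (large vs.\ small $\mu(r_0)$ relative to $\omega(8r)$) and careful tracking of how $\chi$ and $\gamma$ depend on $\delta+\mu(r_0)^2$ so that the final constant $c$ and exponent are independent of $\delta$ — which is the whole point, in view of the approximation argument of Section~\ref{section:approx}. Everything else is the routine De~Giorgi iteration already packaged in Lemmas~\ref{DGvanish}, \ref{DGosc} and Theorem~\ref{DGHolder}.
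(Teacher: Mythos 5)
Your overall strategy (convert the Caccioppoli inequality of Lemma \ref{lemma:cacci:k} into the De Giorgi inequality \eqref{DG+} and run Lemmas \ref{DGvanish}--\ref{DGosc}) is the right one, but the step where you obtain the lower bound on the weight is broken, and this is precisely the crux of the theorem. You restrict to levels $k\ge k_0+\tfrac14\omega_l(8r)$ with $k_0$ the midpoint of $\sup_{B_{8r}}X_lu$ and $\inf_{B_{8r}}X_lu$, and claim that on $A^+_{k,r'}$ one has $|\Xu|\ge |X_lu|\ge k\ge c\,\mu(r_0)$. Neither inequality at the end is available. First, $k=\tfrac34\sup X_lu+\tfrac14\inf X_lu$ can be zero or negative (take $\sup X_lu$ small and $\inf X_lu=-\mu_l(8r)$), so $X_lu>k$ gives no lower bound on $|X_lu|$ at all; and even when $k>0$ it need only be a fraction of $\mu_l(8r)$, which for a given component $l$ can be much smaller than $\mu(8r)$. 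Second, comparability with $\mu(r_0)$ is hopeless: $|\Xu|$ on the small ball $B_{8r}$ can be arbitrarily small compared with $\mu(r_0)$, so $\weight$ cannot be bounded below by $c(\delta+\mu(r_0)^2)^{\frac{p-2}{2}}$ when $\delta$ is small. Your proposed dichotomy $\mu(r_0)\le c\,\omega(8r)$ versus its complement does not repair this, and normalising $\chi$ by $(\delta+\mu(r_0)^2)^{\frac{p-2}{2}}$ (rather than by the local quantity $(\delta+\mu(8r)^2)^{\frac{p-2}{2}}$) hides the problem rather than solving it.

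The paper's argument is organised exactly around this degeneracy, and the two devices you are missing are the following. The dichotomy is measure-theoretic and uses the levels $\pm\tfrac18\mu(8r)$ (fixed fractions of the \emph{local} sup of the \emph{full} horizontal gradient): either for some $l$ one of the sets $\{X_lu>-\tfrac18\mu(8r)\}$, $\{X_lu<\tfrac18\mu(8r)\}$ has measure $\le\tau r^Q$ in $B_{4r}$, or for every $l$ both have measure $>\tau r^Q$. In the first case one needs a statement with no weight lower bound available anywhere, and this is achieved (Lemma \ref{lemma:tau}) by applying Lemma \ref{DGvanish} not to $X_lu$ but to the auxiliary function $v=(\delta+|X_lu|^2)^{\frac{p-2}{4}}X_lu$, for which $\weight|\X X_lu|^2\ge\tfrac{4}{p^2}|\X v|^2$ absorbs the degenerate weight into the unknown; the conclusion $|X_lu|\ge\tfrac1{16}\mu(8r)$ on all of $B_{2r}$ then makes the weight uniformly non-degenerate there and puts \emph{every} component $X_ju$ in $DG(B_{2r})$. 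In the second case, on the relevant super/sublevel sets one genuinely has $|X_lu|\ge\tfrac18\mu(8r)$, hence $\weight\sim(\delta+\mu(8r)^2)^{\frac{p-2}{2}}$, and Lemma \ref{DGosc} applies for $k\ge\tfrac18\mu(8r)$; the resulting loss $2^{-s_0-2}\mu(8r)$ is then absorbed using $\omega(8r)\ge\tfrac78\mu(8r)$, which is exactly what the measure conditions guarantee. Finally, you never invoke the reduction to the case $\omega(r)\ge(\delta+\mu(r_0)^2)^{\frac12}(r/r_0)^{\frac1p}$ (assumption \eqref{assum:mu}), which is needed both to verify the hypothesis $M(\rho_0)\ge\chi\rho_0^{1-Q/q}$ of Lemma \ref{DGvanish} and to convert the factor $(\delta+\mu(8r)^2)^{\frac{2-p}{4}}$ appearing in the correctly normalised $\chi$ into the stated error term; "enlarging $\chi$" as you suggest would enlarge the error term and cannot substitute for this reduction.
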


Actually, the power $1/p$ in (\ref{mur4r}) is not essential; any
number in the interval $(0,2/p)$ will do. In the remaining of this subsection,
we prove Theorem \ref{thm:holder2}. During the proof, we will
specify a fixed and finite number of lower bounds for $s$,
required for the proof to work. These lower bounds depend only on
$n,p,L$. We then take $s$ to be equal to the maximum of this
finite collections of lower bounds. We fix a ball $B_r$ with the
same center as $B_{r_0}$ and with $0<r\le r_0/16$. To prove Theorem
\ref{thm:holder2}, note that we may assume
\begin{equation}\label{assum:mu}
\omega(r)\ge (\delta+\mu(r_0)^2)^{\frac 1 2}\left(\frac{r}{r_0}\right)^{\frac 1 p},
\end{equation}
since, otherwise Theorem \ref{thm:holder2} is true with $s=1$ and we are done. In the following, we assume that
(\ref{assum:mu}) is true. We divide the proof into two cases.

{\it Case 1.} We assume that there exists a number $\tau>0$, small and depending only on $n,p,L$, such that
for at least one $l\in \{ 1,2,\ldots,2n\}$, we have either
\begin{equation}\label{assum:tau}
\vert\{ x\in B_{4r}: X_l u>-\frac{1}{8}\mu(8r)\} \vert\le \tau
r^Q,
\end{equation}
or
\begin{equation}\label{assum:tau2}
\vert\{ x\in B_{4r}: X_l u<\frac{1}{8}\mu(8r)\}\vert \le \tau r^Q.
\end{equation}
The constant $\tau$ will be determined in the following lemma.

\begin{lemma}\label{lemma:tau}
There exists a number $\tau=\tau(n,p,L)>0$ such that if
(\ref{assum:tau}) holds for an index $l$, then
\[\sup_{B_{2r}} X_l u\le -\frac{1}{16}\mu(8r).
\]
Analogously, if (\ref{assum:tau2}) holds for an index $l$, then
\[ \inf_{B_{2r}} X_lu\ge \frac{1}{16}\mu(8r).\]
\end{lemma}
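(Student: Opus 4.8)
The plan is to apply the De Giorgi measure-to-pointwise lemma, Lemma \ref{DGvanish}, to the function $v = X_l u$ (or $-X_l u$). The first step is to verify that $v = X_l u$ belongs to the class $SDG^+(B_{4r})$ with an appropriate choice of $M(4r)$; this is exactly what the Caccioppoli inequality (\ref{cacci:Xu:k}) of Lemma \ref{lemma:cacci:k} provides. Concretely, starting from (\ref{cacci:Xu:k}) with $\delta + |\Xu|^2 \ge \delta > 0$, one drops the weight $\weight$ from the left-hand side at the cost of the constant $(\delta+\mu(r_0)^2)^{(p-2)/2}$ (using $p \ge 2$), bounds $|(X_l u - k)^+| \le 2\mu(8r) \le 4\mu(r_0)$ on the right-hand side, and estimates $|A^+_{k,r}| \le |B_{r_0}|$ in the last term. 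This yields an inequality of the form (\ref{SDG+}) on balls inside $B_{4r}$, with $q > Q$ fixed (take $q$ large enough, say $q = 2Q$), with $M(4r)$ comparable to $\mu(8r)$ (or to $(\delta+\mu(r_0)^2)^{1/2}$, whichever is larger — here the hypothesis (\ref{assum:mu}) and the relation between $r$ and $r_0$ will be used to control the $\chi$-term), and with $\chi$ of the form $\chi = c\, r_0^{-1}|B_{r_0}|^{1/q}(\delta+\mu(r_0)^2)^{1/2}\cdot(\text{power of }(\delta+\mu(r_0)^2))$, all constants depending only on $n,p,L,q$.

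The second step is to invoke Lemma \ref{DGvanish} with $\rho_0 = 4r$, with the number $k_0 = -\tfrac{1}{8}\mu(8r)$, and with a fixed choice of $b \in (0,1)$, say $b = \tfrac14$. The hypothesis $|\{x \in B_{4r} : X_l u > -\tfrac18\mu(8r)\}| = |A^+_{k_0, 4r}| \le \tau r^Q$ is precisely the smallness assumption $|A^+_{k_0,\rho_0}| \le \theta \rho_0^Q$ of Lemma \ref{DGvanish}, provided we choose $\tau = \theta(\gamma,n,q,b)\cdot 4^{-Q}$ where $\gamma$ is the constant appearing in our verification of (\ref{SDG+}) and $\theta$ is the one furnished by Lemma \ref{DGvanish}; this fixes $\tau = \tau(n,p,L)$. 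We must also check the side condition $M(\rho_0) \ge \chi \rho_0^{1 - Q/q}$; this is where the structural assumptions pay off: since $4r \le r_0/4$ and by (\ref{assum:mu}) combined with $\mu(8r) \gtrsim \omega(8r) \ge \omega(r)$ one has $M(4r) \gtrsim (\delta+\mu(r_0)^2)^{1/2}(r/r_0)^{1/p}$, while $\chi (4r)^{1-Q/q}$ carries a factor $r^{1-Q/q}$ against $r_0^{-1+Q/q}$, and choosing $q$ large makes the power of $r/r_0$ on the $\chi$-side strictly larger, so the inequality holds for $r \le r_0/16$ (possibly after absorbing a constant, which can be arranged by taking $q$ large enough or by shrinking the admissible range of $r$, still only in a way depending on $n,p,L$). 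Lemma \ref{DGvanish} then gives $\sup_{B_{2r}} X_l u \le k_0 + b M(4r) = -\tfrac18\mu(8r) + \tfrac14 M(4r)$, and with $M(4r)$ chosen comparable to $\tfrac12\mu(8r)$ (one is free to take $M(\rho_0) = \tfrac12\mu(8r)$ as long as the $SDG^+$ inequality and the side condition hold with it) this becomes $\le -\tfrac18\mu(8r) + \tfrac18\mu(8r)\cdot\tfrac{1}{\ }$; selecting $b$ so that $bM(4r) \le \tfrac{1}{16}\mu(8r)$ yields the claimed bound $\sup_{B_{2r}} X_l u \le -\tfrac{1}{16}\mu(8r)$.

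The case (\ref{assum:tau2}) is handled identically by applying the same argument to $-X_l u \in SDG^-$, using Remark \ref{remarkadd} to get the Caccioppoli inequality for $(X_l u - k)^-$, and the symmetric form of Lemma \ref{DGvanish}; this gives $\inf_{B_{2r}} X_l u \ge \tfrac{1}{16}\mu(8r)$. The main obstacle I anticipate is the bookkeeping around the side condition $M(\rho_0) \ge \chi\rho_0^{1-Q/q}$ and the choice of $M(4r)$: one must simultaneously make $M(4r)$ small enough (comparable to $\tfrac{1}{16}\mu(8r)$ after multiplying by $b$) for the conclusion to be useful, yet large enough to dominate the $\chi$-term, and this balance is exactly what forces the hypothesis (\ref{assum:mu}) and the smallness of $r/r_0$ into play, together with a sufficiently large but fixed choice of the exponent $q$. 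Everything else — verifying (\ref{SDG+}) from (\ref{cacci:Xu:k}), and tracking that all constants depend only on $n, p, L$ (not on $\delta$, $M$, or $\nu$) — is routine given Lemma \ref{lemma:cacci:k} and Lemma \ref{DGvanish}.
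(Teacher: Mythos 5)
Your overall strategy (feed the Caccioppoli inequality of Lemma \ref{lemma:cacci:k} into the measure-to-pointwise Lemma \ref{DGvanish} with $k_0=-\tfrac18\mu(8r)$, choose $\tau$ from the resulting $\theta$, and check the side condition via (\ref{assum:mu})) is the right one, but the very first step — passing from the weighted inequality (\ref{cacci:Xu:k}) to an unweighted $SDG^+$ inequality for $v=X_lu$ itself — contains a genuine gap for $p>2$. You propose to "drop the weight $\weight$ from the left-hand side at the cost of the constant $(\delta+\mu(r_0)^2)^{(p-2)/2}$", but that requires a pointwise \emph{lower} bound $\big(\delta+|\Xu|^2\big)^{\frac{p-2}{2}}\gtrsim(\delta+\mu(r_0)^2)^{\frac{p-2}{2}}$, which is false on the set where you need it: $A^+_{k_0,4r}=\{X_lu>-\tfrac18\mu(8r)\}$ contains points where $X_lu$, and possibly all of $\Xu$, is near zero, so the weight there is only bounded below by $\delta^{(p-2)/2}$ — and the constants are not allowed to depend on $\delta$. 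A two-sided bound of the type (\ref{Xu:uniform}) is exactly what Lemma \ref{lemma:tau} is being used to \emph{establish}, so it cannot be assumed in its proof.

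The paper resolves this by applying Lemma \ref{DGvanish} not to $X_lu$ but to the transformed function $v=(\delta+|X_lu|^2)^{\frac{p-2}{4}}X_lu$. The chain rule together with $\delta+|X_lu|^2\le\delta+|\Xu|^2$ and $p\ge2$ gives $\weight|\X X_lu|^2\ge\tfrac{4}{p^2}|\X v|^2$, so the degenerate weight is absorbed into the new unknown and (\ref{cacci:Xu:k}) becomes a genuine $SDG^+$ inequality (\ref{Xupower}) for $v$, with $M(r)\sim(\delta+\mu(8r)^2)^{\frac{p-2}{4}}\mu(8r)$ and $q=2Q$. Since $t\mapsto(\delta+t^2)^{\frac{p-2}{4}}t$ is increasing, the level set $E^+_{h_0,4r}$ of $v$ at $h_0=(\delta+k_0^2)^{\frac{p-2}{4}}k_0$ coincides with $A^+_{k_0,4r}$, and the conclusion for $v$ translates back into the stated bound for $X_lu$ after shrinking $b$. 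Your remaining bookkeeping (choice of $\tau=4^Q\theta$, verification of $M\ge\chi\rho_0^{1-Q/q}$ from (\ref{assum:mu}) and $\mu(8r)\ge\omega(r)/2$) is essentially the paper's, but without this substitution the argument does not go through for $p>2$.
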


We postpone the proof of Lemma \ref{lemma:tau} and continue the proof of Theorem \ref{thm:holder2}. In each case
of conclusions of Lemma \ref{lemma:tau}, we have for all $x\in B_{2r}$
\begin{equation}\label{Xu:uniform}
c_1(\delta+\mu(8r)^2)^{\frac{p-2}{2}} \le (\delta+\vert\X
u(x)\vert^2)^{\frac{p-2}{2}}\le
c_2(\delta+\mu(8r)^2)^{\frac{p-2}{2}},
\end{equation}
for some constants $0<c_1< c_2$, depending only on $n,p$. We will
apply Lemma \ref{lemma:cacci:k} with $q=2Q$. Due to
(\ref{Xu:uniform}), (\ref{cacci:Xu:k}) becomes
\begin{equation*}
\begin{aligned}
\int_{A^+_{k,r^{\prime\prime}}}\vert\X X_j u \vert^2\, dx \le &
\frac{c}{(r^\prime-r^{\prime\prime})^2}\int_{A_{k,r^\prime}^+}\vert
(X_j u-k)^+\vert^2\, dx\\
&+c K(\delta+\mu(8r)^2)^{\frac{2-p}{2}}\vert A_{k,
r^\prime}^+\vert^{1-\frac{1}{Q}},
\end{aligned}
\end{equation*}
for all $j\in \{1,2,\ldots,2n\}$, and all
$0<r^{\prime\prime}<r^\prime\le 2r$, where $c=c(n,p,L)>0$ and $K$
is as in Lemma \ref{lemma:cacci:k}. We remark here that the above
inequality is true for all $j\in \{1,2,\ldots,2n\}$. Now, for each
$j$, $X_ju$ belongs to $DG^+(B_{2r})$ with $q=2Q$ and $\chi=c
K^{\frac 1 2}(\delta+\mu(8r)^2)^{\frac{2-p}{4}}$. The
corresponding version of Lemma \ref{lemma:cacci:k} for $(X_j
u-k)^-$, see Remark \ref{remarkadd}, shows that $X_ju$ also belongs to $DG^-(B_{2r})$. So,
$X_ju\in DG(B_{2r})$. We are now in the position to apply Theorem
\ref{DGHolder} to conclude that \begin{equation}\label{est:osc}
\osc_{B_r} X_j u\le (1-2^{s_0})\osc_{B_{2r}} X_j u+ cK^{\frac 1
2}(\delta+\mu(8r)^2)^{\frac{2-p}{4}} r^{\frac 1 2}
\end{equation}
for some $s_0=s_0(n,p,L)>0$. Now taking into account of the
assumptions $p\ge 2$ and (\ref{assum:mu}), we have
\[ (\delta+\mu(8r)^2)^{\frac{2-p}{4}}\le 2^{\frac{p-2}{2}}\omega(r)^{\frac{2-p}{2}}\le 2^{\frac{p-2}{2}}
(\delta+\mu(r_0)^2)^{\frac{2-p}{4}}
\left(\frac{r}{r_0}\right)^{\frac{2-p}{2p}},\]
where in the first inequality we used the fact that
$\mu(8r)\ge \omega(8r)/2\ge \omega(r)/2$,  which follows easily from the
definitions of $\mu$ and $\omega$.
Thus
\begin{equation}\label{estadd10}
cK^{\frac 1
2}(\delta+\mu(8r)^2)^{\frac{2-p}{4}} r^{\frac 1 2}\le c(\delta+\mu(r_0)^2)^{\frac{1}{2}}\left(\frac{r}{r_0}\right)^\frac{1}{p}.
\end{equation}
Now (\ref{est:osc})
becomes
\[
\osc_{B_r} X_j u\le (1-2^{s_0})\osc_{B_{2r}} X_j
u+c(\delta+\mu(r_0)^2)^{\frac{1}{2}}\left(\frac{r}{r_0}\right)^\frac{1}{p}.\]
This proves (\ref{mur4r}). Theorem
\ref{thm:holder2} is proved in this case, modulo the proof of
Lemma \ref{lemma:tau}.

{\it Case 2.} If {\it Case 1} does not happen, then we have for
all $l\in \{ 1,2,\ldots,2n\}$
\begin{equation}\label{assum:tau3}
\vert\{ x\in B_{4r}: X_l u>-\frac{1}{8}\mu(8r)\}\vert>\tau
r^Q,\end{equation} and \begin{equation}\label{assum:tau4} \vert\{
x\in B_{4r}: X_lu<\frac{1}{8}\mu(8r)\}\vert>\tau
r^Q,\end{equation} with $\tau=\tau(n,p,L)>0$, as determined in
Lemma \ref{lemma:tau}. We will prove ({\ref{mur4r}) also in this
case.

First, note that on the set $\{ x\in B_{8r}: X_l
u>\frac{1}{8}\mu(8r)\}$, we have
\begin{equation}\label{Xu:uniform2}
c_3 (\delta+\mu(8r)^2)^{\frac{p-2}{2}}\le (\delta+\vert \X
u\vert^2)^{\frac{p-2}{2}}\le c_4(\delta+\mu(8r)^2)^{\frac{p-2}{2}}
\end{equation}
for some constants $0<c_3<c_4$, depending only on $n,p$. We will
apply Lemma \ref{lemma:cacci:k} with $q=2Q$ for all $k\ge
k_0=\frac{1}{8}\mu(8r)$ in balls $B_{r^{\prime\prime}},
B_{r^\prime}, 0<r^{\prime\prime}<r^\prime\le 8r$. Due to
(\ref{Xu:uniform2}), (\ref{cacci:Xu:k}) becomes
\begin{equation*}
\begin{aligned}
\int_{A_{k,r^{\prime\prime}}^+}\vert\X X_lu\vert^2\, dx\le
&\frac{c}{(r^\prime-r^{\prime\prime})^2}\int_{A_{k,r^\prime}^+}\vert(X_lu-k)^+\vert^2\,
dx\\
&+cK(\delta+\mu(8r)^2)^{\frac{2-p}{2}}\vert
A_{k,r^\prime}^+\vert^{1-\frac 1 Q},
\end{aligned}
\end{equation*}
where $K$
is as in Lemma \ref{lemma:cacci:k}. The above inequality is true for all $l\in \{ 1,2,\ldots, 2n\}$
and for all $k\ge k_0$. We are now in the position to apply Lemma
\ref{DGosc}. Due to (\ref{assum:tau4}), the assumption of Lemma
\ref{DGosc} is satisfied. We conclude that
\begin{equation*}
\begin{aligned}
\sup_{B_{2r}}X_lu
&\le \sup_{B_{8r}}X_lu-2^{-s_0}(\sup_{B_{8r}}
X_lu-\frac{1}{8}\mu(8r))+cK^\frac{1}{2}(\delta+\mu(8r)^2)^{\frac{2-p}{4}}r^{\frac 1 2}\\
&\le \sup_{B_{8r}}X_lu-2^{-s_0}(\sup_{B_{8r}}
X_lu-\frac{1}{8}\mu(8r))+c(\delta+\mu(r_0)^2)^{\frac 1
2}\left(\frac{r}{r_0}\right)^{\frac 1 p}, 
\end{aligned}
\end{equation*}
for some
$s_0=s_0(n,p,L)>0$, where in the second inequality we used (\ref{estadd10}). From (\ref{assum:tau3}), we can derive
similarly, see Remark \ref{remarkadd},  that for
all $l$,
\[ \inf_{B_{2r}}X_l u\ge \inf_{B_{8r}}X_l
u+2^{-s_0}(-\inf_{B_{8r}}X_lu-\frac{1}{8}\mu(8r))-c(\delta+\mu(r_0)^2)^{\frac
1 2}\left(\frac{r}{r_0}\right)^{\frac 1 p}. \] Now the above two
inequalities yield
\[ \osc_{B_{2r}}X_l u\le
(1-2^{-s_0})\osc_{B_{8r}}X_lu+2^{-s_0-2}\mu(8r)+c(\delta+\mu(r_0)^2)^{\frac
1 2}\left(\frac{r}{r_0}\right)^{\frac 1 p}, \] 
and hence
\[ \omega(2r)\le (1-2^{-s_0})\omega(8r)+2^{-s_0-2}\mu(8r)+c(\delta+\mu(r_0)^2)^{\frac
1 2}\left(\frac{r}{r_0}\right)^{\frac 1 p}.\] 
We notice from the conditions (\ref{assum:tau3}) and (\ref{assum:tau4})
that
\[ \omega(8r)\ge \frac{7}{8}\mu(8r).\]
Then the above two inequalities yield
\[ \omega(2r)\le (1-2^{-s_0-1})\omega(8r)+c(\delta+\mu(r_0)^2)^{\frac
1 2}\left(\frac{r}{r_0}\right)^{\frac 1 p}.\] 
This proves
(\ref{mur4r}), and
therefore Theorem \ref{thm:holder2} in this case. The proof of
Theorem \ref{thm:holder2} is complete, modulo the proof of Lemma
\ref{lemma:tau}.

\begin{proof}[Proof of Lemma \ref{lemma:tau}]
We will use Lemma \ref{DGvanish} to prove Lemma \ref{lemma:tau}.
Suppose that (\ref{assum:tau}) holds for some $l\in \{1,2,\ldots,
2n\}$. The case that (\ref{assum:tau2}) holds can be handled
similarly.

Lemma \ref{lemma:cacci:k} with $q=2Q$ yields for all $l$, all
$k\in {\mathbb R}$, and all balls $B_{r^{\prime\prime}},
B_{r^\prime}, 0<r^{\prime\prime}<r^\prime\le 4r$,
\begin{equation}\label{XU100}
\begin{aligned}
&\int_{A_{k,r^{\prime\prime}}^+}\weight\vert\X X_lu\vert^2\, dx\\
\le&
\frac{c}{(r^\prime-r^{\prime\prime})^2}\int_{A_{k,r^\prime}^+}\weight\vert(X_lu-k)^+\vert^2\,
dx
+c K \vert A_{k,r^\prime}^+\vert^{1-\frac 1 Q},
\end{aligned}
\end{equation}
where $K=r_0^{-1}(\delta+\mu(r_0)^2)^{\frac p 2}.$ Now, we denote
$v=(\delta+\vert X_lu\vert^2)^{\frac{p-2}{4}}X_lu$. 
Note that
\[ \weight\vert \X X_lu\vert^2\ge \frac{4}{p^2}\vert \X v\vert^2.\]
Thus we can rewrite (\ref{XU100}) as
\begin{equation}\label{Xupower}
\int_{E_{h,r^{\prime\prime}}^+}\vert \X v\vert^2\, dx\le \frac{
cM(r)^2}{(r^\prime-r^{\prime\prime})^2} \vert
E_{h,r^\prime}^+\vert+\chi^2\vert E_{h,r^\prime}^+\vert^{1-\frac 1
Q},
\end{equation}
where $E_{h,\rho}^+=\{ x\in B_\rho: v(x)>h\}$ and
\[ M(r)=2c(\delta+\mu(8r)^2)^{\frac{p-2}{4}}\mu(8r), \quad
\chi=2^{-\frac p 2}cr_0^{-\frac 1 2}(\delta+\mu(r_0)^2)^\frac{p}{4}.\] Thus
$v\in SDG^+(B_{4r})$. We can now apply Lemma \ref{DGvanish} for a small constant $b>0$, to be chosen soon. Let
$k_0=-\frac{1}{8}\mu(8r)$ and
$h_0=(\delta+k_0^2)^{\frac{p-2}{4}}k_0$. We have
\begin{equation}\label{supv}
\sup_{B_{2r}} v\le h_0+b M(r),
\end{equation}
provided that
\begin{equation}\label{hypo1}
\vert E_{h_0, 4r}^+\vert \le \theta(4r)^Q
\end{equation}
and that
\begin{equation}\label{hypo2}
M(r)\ge \chi (4r)^{\frac 1 2}.
\end{equation}
Note that if we choose $b=b(n,p,L)>0$ small enough, the conclusion
(\ref{supv}) yields
\[ \sup_{B_{2r}} v\le
-\left(\delta+\big(\frac{\mu(8r)}{16}\big)^2\right)^{\frac{p-2}{4}}\frac{\mu(8r)}{16},\]
which is equivalent to the desired result in the lemma
\[ \sup_{B_{2r}} X_l u\le -\frac{1}{16}\mu(8r).\]
Fix such a number $b$. We check the assumptions (\ref{hypo1}) and
(\ref{hypo2}). Observe that (\ref{hypo2}) follows from
(\ref{assum:mu}) and from the fact that
$\mu(8r)\ge \omega(r)/2$, since
\[ M(r)\ge 2^{1-\frac p 2}c\omega(r)^{\frac p 2}\ge 2^{1-\frac p 2}c \big(\delta+\mu(r_0)^2\big)^{\frac p
4}\left(\frac {r}{r_0}\right)^{\frac 1 2}=\chi (4r)^{\frac 1 2}.\]
We also observe that
\[ E_{h_0,4r}^+=\{ x\in B_{4r}: X_lu >-\frac{1}{8}\mu(8r)\}.\]
So, if we choose $\tau=4^Q\theta$, then (\ref{assum:tau}) is
equivalent to (\ref{hypo1}). Let $\theta$ be as in Lemma
\ref{DGvanish}. Then Lemma \ref{lemma:tau} is true for
$\tau=4^Q\theta$. This finishes the proof of Lemma
\ref{lemma:tau}.
\end{proof}

\section{Approximation}\label{section:approx}
In Section \ref{section:lip} and \ref{section:Holder}, we proved Theorem \ref{thm:lip} and Theorem \ref{thm:holder}
for the case $\delta>0$, under the additional assumption that the weak solutions are Lipschitz continuous.
We will remove this additional assumption and prove Theorem \ref{thm:lip} and Theorem \ref{thm:holder}
for the case $\delta>0$ in subsection \ref{subsection:proofpositive}. The proof is based on the Hilbert-Haar existence theory for
functional (\ref{functional}) in the setting of Heisenberg group, which is established in subsection \ref{subsection:Hilbert}.
Finally, in subsection \ref{subsection:proofzero} we prove Theorem \ref{thm:lip} and Theorem \ref{thm:holder} for the case $\delta=0$,
by an approximation argument.

\subsection{Hilbert-Haar existence theory}\label{subsection:Hilbert}
In the setting of Euclidean spaces, the Hilbert-Haar theory gives the existence of Lipschitz continuous minimizers for convex functional
with smooth boundary value in strictly convex domain. In this subsection, we establish the analogous existence theory 
for functional (\ref{functional}) in the
setting of Heisenberg group. Our approach is quite similar to that in the Euclidean setting.

Let $D\subset {\mathbb H}^n$ be a bounded domain. We consider $D$ to be a domain in ${\mathbb R}^{2n+1}$. We assume that $D$ is a convex domain
in ${\mathbb R}^{2n+1}$, and that there exists a constant $\varepsilon>0$ such that the following holds: for every
$y\in \partial D$, there is a vector $b\in {\mathbb R}^{2n+1}$ with $\vert b \vert=1$ such that
\begin{equation}\label{convex}
\langle x-y, b\rangle \ge \varepsilon \vert x-y\vert^2, \quad \forall\, x\in D.
\end{equation}
Here $\langle\cdot,\cdot\rangle$ denotes the inner product of two vectors in ${\mathbb R}^{2n+1}$ and $\vert\cdot\vert$ the Eclidean norm.
The following theorem gives the existence of Lipschitz continuous solutions with $C^2$ smooth boundary value in such domains $D$.

\begin{theorem}\label{thm:hilbert}
Suppose that $D\subset {\mathbb R}^n$ is a bounded domain satisfying condition (\ref{convex}). Let $\phi$ be a $C^2$ function in a
neiborhood $D^\prime$ of $D$, and $u\in HW^{1,p}(D)$ be the unique solution of
\begin{equation}\label{direchelet}
\begin{cases}
\divo (Df(\X u))=0 \quad & \textrm{ in } D\\
u-\phi\in HW^{1,p}_0(D) & \textrm{ on } \partial D
\end{cases}
\end{equation}
Then there is a constant $M$, depending only on $n,\varepsilon, \max_{\overline D}(\vert D\phi\vert+\vert D^2\phi\vert)$ and the diameter of $D$, such that
\[ \norm \X u\norm_{L^\infty(D)}\le M.\]
\end{theorem}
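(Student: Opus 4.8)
The plan is to adapt the classical Hilbert--Haar (bounded slope condition) argument to the sub-elliptic setting, using the comparison principle for \eqref{equation:main} together with the left-translation and dilation invariance of the equation. First I would observe that because $\phi\in C^2(\overline{D'})$ and $D$ is bounded, the ordinary second-order Taylor estimate gives a constant $Q_0=Q_0\big(\max_{\overline D}(|D\phi|+|D^2\phi|),\diam D\big)$ such that for every $y\in\partial D$ and every $x\in\overline D$,
\[
|\phi(x)-\phi(y)-\langle D\phi(y),x-y\rangle|\le Q_0\,|x-y|^2 .
\]
Combining this with the strict convexity condition \eqref{convex}, for each boundary point $y$ one can choose affine functions $\ell_y^{\pm}(x)=\phi(y)+\langle D\phi(y),x-y\rangle\pm \frac{Q_0}{\varepsilon}\langle b_y,x-y\rangle$ (with $b_y$ the vector from \eqref{convex}) so that $\ell_y^-\le \phi\le \ell_y^+$ on $\partial D$, and the slopes of $\ell_y^{\pm}$ are bounded by a constant $N=N(n,\varepsilon,Q_0)$. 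Thus $\phi$ satisfies a bounded slope condition on $\partial D$ with constant $N$: it is squeezed on $\partial D$ between affine functions of slope at most $N$ through each boundary point.

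Next I would use the comparison principle together with the symmetries of the equation to upgrade this to a global Lipschitz bound on $u$. An affine function $\ell(x)=a+\langle c,x\rangle$ is \emph{not} in general a solution of \eqref{equation:main} in $\mathbb H^n$; however, the group translates of $u$ are solutions. For a fixed $z\in\mathbb H^n$ with $z$ small enough that $z\cdot D$ still meets a neighborhood where things make sense, the function $x\mapsto u(z^{-1}x)$ solves \eqref{equation:main} on $z\cdot D$, and one compares $u$ with its translates on $D\cap(z\cdot D)$. The boundary data are controlled by the bounded slope condition on $\phi$ after estimating how the affine barriers transform under the (polynomial, degree $\le 2$) change of variables given by group multiplication; the extra terms are quadratic in the increment and are absorbed using \eqref{convex} exactly as in the interior-vs-boundary Taylor step above. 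Iterating the comparison over a chain of small translations — or, more cleanly, differentiating the family of translates — yields $|u(x)-u(y)|\le M\,|x-y|$ for $x,y\in\overline D$ in the \emph{Euclidean} distance, with $M=M(n,\varepsilon,Q_0,\diam D)$. Since the horizontal vector fields $X_i$ have smooth bounded coefficients on the bounded set $D$, Euclidean Lipschitz continuity of $u$ implies $\X u\in L^\infty(D)$ with $\|\X u\|_{L^\infty(D)}\le cM$, which is the assertion.

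I expect the main obstacle to be precisely the failure of affine functions to solve the Heisenberg $p$-Laplace-type equation, so that the Euclidean bounded slope/comparison machinery cannot be quoted verbatim: one must replace affine comparison functions by group translates (and possibly dilates) of $u$ itself and carefully track the quadratic error terms produced by the group law, showing they are dominated by the strict-convexity gain $\varepsilon|x-y|^2$. A secondary technical point is the passage from the boundary bounded slope condition to the global estimate; rather than a single comparison one needs either a covering/iteration over short group translations or a limiting (derivative in the translation parameter) argument, and one should check that $D$ convex in $\mathbb R^{2n+1}$ guarantees the relevant intersections $D\cap(z\cdot D)$ behave well and that the constant does not degenerate. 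Everything else — the existence and uniqueness of $u$, the comparison principle, and the Taylor estimate for $\phi$ — is either already established in Section~\ref{section:preliminaries} or entirely routine.
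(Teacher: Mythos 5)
Your proposal contains a genuine gap, and it originates in a factual error. You assert that an affine function $\ell(x)=a+\langle c,x\rangle$ is ``not in general a solution'' of \eqref{equation:main} in $\mathbb H^n$ and you build your whole strategy around circumventing this. In fact affine functions \emph{are} weak solutions: a direct computation from $X_i=\partial_{x_i}-\tfrac{x_{n+i}}{2}\partial_t$, $X_{n+i}=\partial_{x_{n+i}}+\tfrac{x_i}{2}\partial_t$ shows that the horizontal Hessian $\X\X\ell$ of an affine function is anti-symmetric ($X_iX_{n+i}\ell=-X_{n+i}X_i\ell=\tfrac12 c_{2n+1}$, all other entries vanish), so its contraction with the symmetric matrix $D^2f(\X\ell)$ is zero, i.e.\ $\divo(Df(\X\ell))=\sum_{i,j}D_jD_if(\X\ell)\,X_iX_j\ell=0$. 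This observation is precisely the key point of the argument: it lets one use the affine functions $\ell_y^{\pm}$ from the bounded slope condition directly as barriers and conclude $\ell_y^-\le u\le \ell_y^+$ in $D$ by the comparison principle, which yields $|u(x)-u(y)|\le M\,d(x,y)$ for $y\in\partial D$, $x\in\overline D$.

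Because you discard the affine barriers, your substitute --- comparing $u$ with its group translates $u(z^{-1}\cdot)$ on $D\cap(z\cdot D)$ and ``absorbing quadratic error terms'' --- cannot close. The translation comparison (left-invariance of the $X_i$ plus the comparison principle) is indeed the right tool for the \emph{second} half of the argument: it shows that $\sup_{x,y}\frac{|u(x)-u(y)|}{d(x,y)}$ is attained with one point on $\partial D$, i.e.\ it reduces the interior--interior estimate to the boundary--interior estimate. But it presupposes that boundary estimate: on $\partial\bigl(D\cap(z\cdot D)\bigr)$ only one of the two compared points lies on $\partial D$, so the bounded slope condition on $\phi$ alone controls nothing there, and no amount of iterating or differentiating in $z$ manufactures the missing boundary gradient bound. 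So the barrier construction at the boundary is absent from your proof, and the step you flag as the ``main obstacle'' is exactly where the argument should instead exploit that affine functions solve the equation. (Your final reduction from a Euclidean or CC Lipschitz bound to $\|\X u\|_{L^\infty}\le M$ is fine.)
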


\begin{proof}
We will show that $u$ is Lipschitz continuous in $D$ with repect to the Carnot-Carath\`eodory metric, that is,
\begin{equation}\label{uislip}
\vert u(x)-u(y)\vert\le M d(x,y), \quad \forall\, x,y\in \overline D,
\end{equation}
for a constant $M$, depending on $n,\varepsilon, \max_{\overline D}(\vert D\phi\vert+\vert D^2\phi\vert)$ and the diameter of $D$. Then the conclusion of Theorem \ref{thm:hilbert} follows.

First, we prove (\ref{uislip}) for all $y\in \partial D$ and all $x \in \overline D$. To this end, we fix a point $y\in \partial D$ and $b\in {\mathbb R}^{2n+1}$ be
the vector satisfying condition (\ref{convex}). The essential point of the proof is to consider the
following linear function
\[ L^+(x)=\phi(y)+\langle D\phi(y)+Kb, x-y\rangle\]
for a big constant $K>0$, to be determined soon. One good point to consider this function is that we have
\[ \phi(x)\le L^+(x), \quad \forall \, x\in \partial D.\]
Indeed, by the Taylor formular, we have
\[ \phi(x)=\phi(y)+\langle D\phi(y),x-y\rangle+\frac{1}{2}\sum_{i,j=1}^{2n+1} D_iD_j \phi(\xi)(x_i-y_i)(x_j-y_j),\]
where $\xi$ is a convenient point between $x=(x_1,\ldots, x_{2n+1})$ and $y=(y_1,\ldots,y_{2n+1})$.
Thus, if we let $K=\frac{(2n+1)^2}{2\varepsilon}\max_{\overline D}\vert D^2\phi\vert$, (\ref{convex}) implies that
\[ \phi(x)\le \phi(y)+\langle D\phi(y),x-y\rangle+K\varepsilon\vert x-y\vert^2\le L^+(x)\]
for all $x\in \overline D$, in particular for all $x\in \partial D$.

Another good point to consider $L^+(x)$ is that it is a solution of equation (\ref{equation:main}) in $D$. This is easy to check. Indeed, since $L^+(x)$ is a smooth
function, we can write
\[\divo (Df(\X L^+))=\langle D^2 f(\X L^+), \X\X L^+\rangle= \sum_{i,j=1}^{2n}D_iD_j f(\X L^+)X_iX_j L^+.\]
By a direct calculation, we can show that the horizontal Hessian matrix $\X \X L^+$ is anti-symmetric. We have for all $i=1,2,\dots,2n$,
$X_iX_{n+i}L^+(x)=-X_{n+i}X_iL^+(x)$ and $X_iX_j L^+(x)=X_jX_iL^+(x)=0$ if $j\neq n+i$.
Note that the Hessian matrix $D^2 f$ is symmetric. Thus, the inner product of $D^2f$ and $\X\X L^+$ is zero. This shows that
$L^+(x)$ is a solution of equation (\ref{equation:main}).

We claim that $u(x) \le L^+(x)$ for all $x\in \overline D$, where $u$ is the solution of equation (\ref{direchelet}).
This follows from the comparison principle. Since $D$ is a regularity domain and the boundary value $\phi$ is continuous,
we know that the solution $u$ of equation (\ref{direchelet}) is continuous up to the boundary and $u(x) =\phi(x)$ for all $x\in \partial D$.
Thus, we have $u(x)=\phi(x)\le L^+(x)$ for all $x\in \partial D$. Since $u$ and $L^+$ are both solutions of equation (\ref{equation:main}), the claim follows from the
comparison principle.
Now, we observe that $L^+(x)$ is Lipschitz continuous, that is,
\[ \vert L^+(x)-L^+(z)\vert\le M d(x,z), \quad \forall \, x,z\in \overline D\]
for a constant $M>0$, depending on $n,\varepsilon, \max_{\overline D}(\vert D\phi\vert+\vert D^2\phi\vert)$ and the diameter of $D$. We then have
\[ u(x)-u(y)\le L^+(x)-u(y)=L^+(x)-L^+(y)\le M d(x,y)\]
Similarly, we can define
\[ L^-(x)=\phi(y)+\langle D\phi(y)-Kb, x-y\rangle\]
and show that
\[ u(x)-u(y)\ge -\tilde M\vert x-y\vert.\]
This proves (\ref{uislip}) for all $y\in \partial D$ and all $x\in \overline D$.

To prove (\ref{uislip}) for all $x,y\in \overline D$, we will show the following maximum princilpe
for the differential ratio (John von Neumann theorem):
\[ \sup_{x,y\in \overline D}\frac{\vert u(x)-u(y)\vert}{d(x,y)}=\sup_{x\in \overline D, y\in \partial D}\frac{\vert u(x)-u(y)\vert}{d(x,y)}.\]
Indeed, for any $z\in {\mathbb H}^n$, such that $D_z\cap D\neq \emptyset$, where $D_z=\{zx: x\in D\}$. Set
\[ C=\sup_{x\in \partial (D_z\cap D)} (u(zx)-u(x)).\]
Obviously, we have $u(zx)\le u(x)+C$ for all $x\in \partial(D_z\cap D)$. On the other hand,
the functions $u(zx)$ and $u(x)+C$ are weak solutions of equation (\ref{equation:main}) in $D_z$ and $D$, respectively. Thus they are weak solutions
in $D_z\cap D$. Now the comparison principle implies that $u(zx)\le u(x)+C$ for all $x\in D_z\cap D$, that is,
\[ u(zx)-u(x)\le \sup_{w\in \partial (D_z\cap D)} (u(zw)-u(w)), \quad \forall \, x\in D_z\cap D.\]
We rewrite the above inequality as
\[ \frac{u(zx)-u(x)}{d(z,0)}\le \sup_{w\in \partial (D_z\cap D)} \frac{u(zw)-u(w)}{d(z,0)}.\]
Since $w\in\partial (D_z\cap D)$ implies that $w\in \partial D$ or $zw\in \partial D$,  the above inequality, together with the arbitrariness of $z$, implies
the maximun princilpe for the differential ratio. This proves the theorem.
\end{proof}

\subsection{Proofs of Theorem \ref{thm:lip} and Theorem \ref{thm:holder} for the case $\delta>0$}\label{subsection:proofpositive}
Let $u\in HW^{1,p}(\Omega)$ be a weak solution of equation (\ref{equation:main}).
We fix a ball $B_{r}(x_0)\subset \Omega$. We may assume that $x_0=0$, and we write $B_{r}=B_{r}(0)$.
Since $C^\infty(B_{r})$ is dense in $HW^{1,p}(B_{r})$, we can find a sequence of functions
$\phi_\varepsilon\in C^\infty(B_{r})$ such that $\phi_\epsilon$ converges to $u$ in $HW^{1,p}(B_{r})$.
Now, since the Carnot-Carath\`eodory metric and the  Kor\`anyi metric are equivalent, we can find a  Kor\`anyi ball $K_{\sigma r}\Subset B_{r}$, centered at $0$,
with a constant $\sigma=\sigma(n)>0$. The reason that we consider the  Kor\`anyi ball $K_{\sigma r}$ is that it is convex (in ${\mathbb R}^{2n+1}$) and
it satisfies condition (\ref{convex}). Now we solve the Direchlet problem
\begin{equation}\label{direchelet2}
\begin{cases}
\divo (Df(\X v))=0 \quad & \textrm{ in } K_{\sigma r}\\
v-\phi_\varepsilon\in HW^{1,p}_0( K_{\sigma r}) & 
\end{cases}
\end{equation}
Let $u_\varepsilon$ be the unique weak solution of the above equation. We use $\varphi=u_\varepsilon-\phi_\varepsilon$ as a test-function in
the above equation to obtain that
\[ \int_{K_{\sigma r}} \langle Df(\X u_\varepsilon), \X u_\varepsilon\rangle\, dx=\int_{K_{\sigma r}}\langle Df(\X u_\varepsilon), \X\phi_\varepsilon\rangle\, dx.\]
Then the ellipticity condition (\ref{stru})  and the structure condition (\ref{structure}) yields
\[ \int_{K_{\sigma r}}\big(\delta+\vert\X u_\varepsilon\vert^2\big)^{\frac{p-2}{2}}\vert\X u_\varepsilon\vert^2\, dx
\le c\int_{K_{\sigma r}}\big(\delta+\vert\X u_\varepsilon\vert^2\big)^{\frac{p-1}{2}}\vert\X\phi_\varepsilon\vert+c\delta^{\frac p 2}\vert K_{\sigma r}\vert,\]
which implies that
\begin{equation}\label{uepsilon}
\int_{K_{\sigma r}}\big(\delta+\vert\X u_\varepsilon\vert^2\big)^{\frac p 2}\, dx\le c\int_{K_{\sigma r}}\big(\delta+\vert\X \phi_\varepsilon\vert^2\big)^{\frac p 2}\, dx
\le c\int_{K_{\sigma r}}\big(\delta+\vert\X u\vert^2\big)^{\frac{p}{2}}\, dx+o(\varepsilon),
\end{equation}
where $c=c(p, L)>0$ and $o(\varepsilon)\to 0$ as $\varepsilon\to 0$. Since $\phi_\varepsilon$ is smooth and $K_{\sigma r}$ satisfies the condition
(\ref{convex}), we may apply Theorem \ref{thm:hilbert} to conclude that
\[\norm \X u_\varepsilon\norm\le M\]
for a constant $M>0$, depending on $n, K_{\sigma r}, \max_{K_{\sigma r}}(\vert D\phi_\varepsilon\vert+\vert D^2\phi_\varepsilon\vert)$.
Let $B_{2\tau r}$ be a ball in $K_{\sigma r}$, centered at 0, with a constant $\tau=\tau(n)>0$.
Now, we can apply Theorem \ref{thm:lip} and Theorem \ref{thm:holder} to the weak solution $u_\varepsilon$, since it is Lipschitz continuous. We have
that
\[ \sup_{B_{\tau r}}\vert\X u_\varepsilon\vert\le c\left(\intav_{B_{2\tau r}}\big(\delta+\vert\X u_\varepsilon\vert^2\big)^{\frac{p}{2}}\, dx\right)^{\frac 1 p},\]
if $1<p<\infty$, and that for all $0<\rho\le \tau r$,
\[ \max_{1\le l\le 2n}\osc_{B_\rho} X_lu_\varepsilon\le c\big(\frac{\rho}{r}\big)^\alpha
\left(\intav_{B_{2\tau r}}\big(\delta+\vert\X u_\varepsilon\vert^2\big)^{\frac{p}{2}}\, dx\right)^{\frac 1 p},\]
if $2\le p<\infty$. Here $c=c(n,p,L)>0$, $\tau=\tau(n)>0$ and $\alpha=\alpha(n,p,L)>0$. They do not depend on $\varepsilon, \delta$ or $r$.
Now we let $\varepsilon\to 0$ to conclude the proof. (\ref{uepsilon}) implies that $u_\varepsilon$ converges weakly to a function $\bar u$ in $HW^{1,p}(K_{\sigma r})$.
On one hand, we have $u_\varepsilon-\phi_\varepsilon\in  HW^{1,p}_0( K_{\sigma r})$, which means that $\bar u-u\in  HW^{1,p}_0( K_{\sigma r})$. On the other hand,
it is easy to show that $\bar u$ is a weak solution of equation (\ref{equation:main}). Now the uniqueness of solution of equation (\ref{equation:main}) implies that
$\bar u=u$. We then conclude from the above two estimates that
if $1<p<\infty$,
\[ \sup_{B_{\tau r}}\vert\X u\vert\le c\left(\intav_{B_{r}}\big(\delta+\vert\X u\vert^2\big)^{\frac{p}{2}}\, dx\right)^{\frac 1 p},\]
and that if $2\le p<\infty$,
\[ \max_{1\le l\le 2n}\osc_{B_\rho} X_lu\le c\big(\frac{\rho}{r}\big)^\alpha
\left(\intav_{B_{r}}\big(\delta+\vert\X u\vert^2\big)^{\frac{p}{2}}\, dx\right)^{\frac 1 p},\]
for all $0<\rho\le \tau r$. These estimates holds for all $B_r\subset \Omega$. Now Theorem \ref{thm:lip}
and Theorem \ref{thm:holder} follows from a simple covering argument. This completes the proofs.

\subsection{Proofs of Theorem \ref{thm:lip} and Theorem \ref{thm:holder} for the case $\delta=0$}\label{subsection:proofzero}
The proof follows from an easy approximation argument. We only mention the line of the proof.
Suppose that the integrand function $f$ of functional (\ref{functional}) satisfies the structure condition
\begin{equation}\label{structure3}
\begin{aligned} \vert z\vert^{p-2} \vert\xi\vert^2\le
\langle D^2f(z) &\xi,\xi\rangle \le \vert
z\vert^{p-2}\vert \xi\vert^2;\\
\vert Df(z)\vert & \le \vert z\vert^{p-1},
\end{aligned}
\end{equation}
We may assume that
$f(0)=0$. For $\delta>0$, we define the function 
\begin{equation}\label{fdelta}
f_\delta(z)=\begin{cases} \big(\delta+f(z)^{\frac{2}{p}}\big)^{\frac p 2}, \quad &\text{ if } 1<p<2;\\
\delta^{\frac{p-2}{2}}\vert z\vert^2+f(z), &\text{ if } p\ge 2.
\end{cases}
\end{equation}
It is not hard to check that the new function $f_\delta$ satisfies (\ref{structure}) with $\delta>0$, that is,
\begin{equation}\label{structure4}
\begin{aligned} \frac{1}{\overline L}(\delta+\vert z\vert^2)^{\frac{p-2}{2}} \vert\xi\vert^2\le
\langle D^2f_\delta (z) &\xi,\xi\rangle \le {\overline L}(\delta+\vert
z\vert^2)^{\frac{p-2}{2}}\vert \xi\vert^2;\\
\vert Df_\delta(z)\vert & \le {\overline L}(\delta+\vert z\vert^2)^{\frac{p-1}{2}},
\end{aligned}
\end{equation}
where ${\overline L}={\overline L} (p,L)>0$.
Now let $u$ be a solution of equation (\ref{equation:main}) satisfying (\ref{structure3}). We let $u_\delta\in HW^{1,p}(\Omega)$
be the unique solution of 
\begin{equation}\label{direchelet3}
\begin{cases}
\divo (Df_\delta(\X v))=0 \quad & \textrm{ in } \Omega\\
v-u\in HW^{1,p}_0( \Omega) & \textrm{ on } \partial  \Omega.
\end{cases}
\end{equation}
Then we may apply Theorem \ref{thm:lip} and Theorem \ref{thm:holder} to the weak solutions $u_\delta$. 
We can obtain the uniform estimates in these theorems. Eventually, letting $\delta\to 0$, we conclude the proof.    
\section{Appendix}

\begin{proof}[Proof of Lemma \ref{caccioppoli:T}]
Let $\eta\in C^\infty_0(\Omega)$ and $\varphi=\eta^2\vert
Tu\vert^\beta Tu$. We use $\varphi$ as a test-function in the
equation (\ref{equation:T}) for  $Tu$ to obtain that
\begin{equation*}
\begin{aligned}
(\beta+1)\int_\Omega \sum_{i,j=1}^{2n}&\eta^2\vert Tu\vert^\beta D_jD_if(\Xu)X_jTuX_iTu\, dx\\
&=-2\int_\Omega \sum_{i,j=1}^{2n}\eta\vert Tu\vert^\beta TuD_jD_if(\Xu)X_jTuX_i\eta\, dx.
\end{aligned}
\end{equation*}
By the structure condition (\ref{structure}), we have
\begin{equation*}
\begin{aligned}
&\int_\Omega \eta^2\weight\vert Tu\vert^\beta\vert \X Tu\vert^2\, dx\\
\le& \frac{c}{\beta+1}\int_\Omega \vert\eta \vert\vert\X\eta\vert\weight\vert
Tu\vert^{\beta+1}\vert\X Tu\vert\, dx,
\end{aligned}
\end{equation*}
from which, together with the Cauchy-Schwarz inequality, the lemma follows.
\end{proof}

\begin{proof}[Proof of Lemma \ref{caccioppoli:horizantal:sigma}]
Fix $l\in \{ 1, 2,\ldots,n\}$, $\beta\ge 0$ and $\eta\in C^\infty_0(\Omega)$.
let $\varphi=\eta^2\deltaX^{\frac \beta 2}X_l u$. We use $\varphi$
as a test-function in the 
equation (\ref{equation:horizontal}) to obtain that
\begin{equation}\label{weak5}
\begin{aligned}
\int_\Omega \sum_{i,j=1}^{2n}&\eta^2D_jD_i f(\Xu)X_jX_l uX_i\big(\deltaX^{\frac \beta 2}X_l u\big)\, dx\\
=&-2\int_\Omega\sum_{i,j=1}^{2n}\eta D_jD_if(\Xu)X_jX_l uX_i\eta\deltaX^{\frac \beta 2}X_lu\, dx\\
&-\int_\Omega \sum_{i=1}^{2n} D_{n+l}D_i f(\Xu)TuX_i\big(\eta^2\deltaX^{\frac\beta 2}X_l u\big)\, dx\\
&+\int_\Omega T\big(D_{n+l}f(\Xu)\big)\big(\eta^2\deltaX^{\frac\beta 2}X_l u\big)\, dx\\
=& I_1^l+I_2^l+I_3^l.
\end{aligned}
\end{equation}
Similarly, we can deduce from equation (\ref{equation:horizontal2}) that for all $l\in \{ n+1,\ldots,2n\}$,
\begin{equation}\label{weak6}
\begin{aligned}
\int_\Omega \sum_{i,j=1}^{2n}&\eta^2D_jD_i f(\Xu)X_jX_l uX_i\big(\deltaX^{\frac \beta 2}X_l u\big)\, dx\\
=&-2\int_\Omega\sum_{i,j=1}^{2n}\eta D_jD_if(\Xu)X_jX_l uX_i\eta\deltaX^{\frac \beta 2}X_lu\, dx\\
&+\int_\Omega \sum_{i=1}^{2n} D_{l-n}D_i f(\Xu)TuX_i\big(\eta^2\deltaX^{\frac\beta 2}X_l u\big)\, dx\\
&-\int_\Omega T\big(D_{l-n}f(\Xu)\big)\big(\eta^2\deltaX^{\frac\beta 2}X_l u\big)\, dx\\
=& I_1^l+I_2^l+I_3^l.
\end{aligned}
\end{equation}
Summing the above equations for all $l$ from $1$ to $2n$, we arrive at
\begin{equation}\label{weak7}
\int_\Omega \sum_{i,j, l} \eta^2D_jD_i f(\Xu)X_jX_l uX_i
\left(\deltaX^{\frac \beta 2}X_l u\right)\, dx=\sum_{l} \left(I_1^l+I_2^l+I_3^l\right).
\end{equation}
All sums for $i,j,k$ are from 1 to $2n$. We will estimate both
sides of (\ref{weak7}) as follows. For the left hand side, we note
that
\[ X_i\left(\deltaX^{\frac \beta 2} X_l u\right)=
\deltaX^{\frac \beta 2}X_iX_l u+\frac{\beta}{2} \deltaX^{\frac{\beta-2}{2}} X_i(\vert\Xu\vert^2)X_l u.
\]
Thus, by the structure condition (\ref{structure}),
\[
\sum_{i,j,l}D_jD_i f(\Xu)  X_jX_l uX_i \left(\deltaX^{\frac \beta 2} X_l u\right)\ge
\deltaX^{\frac{p-2+\beta}{2}}\vert\X \X u\vert^2.
\]
Hence,
\begin{equation}\label{weak8}
\text{left of (\ref{weak7})} \ge \int_\Omega \eta^2\deltaX^{\frac{p-2+\beta}{2}}\vert\X\X u\vert^2\, dx.
\end{equation}
For the right hand side, we will show that for each 
$l=1,\ldots,2n$ and each $m=1,2,3$, we have
\begin{equation}\label{claim100}
\begin{aligned}
\vert I_m^l\vert\le & \frac{1}{12n}
\int_\Omega\eta^2\deltaX^{\frac{p-2+\beta}{2}}\vert \X \X u \vert^2\, dx\\
&+c(\beta+1)^4\int_\Omega \eta^2\deltaX^{\frac{p-2+\beta}{2}}\vert Tu\vert^2\, dx\\
&+c\int_\Omega (\vert\X\eta\vert^2+\vert\eta\vert\vert
T\eta\vert)\deltaX^{\frac{p+\beta}{2}}\, dx,
\end{aligned}
\end{equation}
where $c=c(n,p,L)>0$.
Then the lemma follows from the above estimates for
both sides of (\ref{weak7}). This completes the proof the lemma,
modulo the proof of (\ref{claim100}).

We now prove (\ref{claim100}). First, to prove that
(\ref{claim100}) holds for for $I_1^l, l=1,2,\ldots, 2n$, we have
by the structure condition (\ref{structure})
\begin{equation}\label{weak9}
\vert I_1^l\vert \le c\int_\Omega \vert\eta\vert\vert\X
\eta\vert\deltaX^{\frac{p-1+\beta}{2}}\vert\X \X u\vert\, dx,
\end{equation}
from which, together with the Cauchy-Schwarz inequality, (\ref{claim100}) for
$I_1^l$ follows. Second, for $I_2^l, l=1,2,\ldots, 2n$, we have
\begin{equation}\label{weak10}
\begin{aligned}
\vert I^l_2\vert\le &c(\beta+1)\int_\Omega \eta^2 \deltaX^{\frac{p-2+\beta}{2}}\vert Tu\vert \vert\X\X u\vert\, dx\\
&+c\int_\Omega \vert\eta\vert\vert\X
\eta\vert\deltaX^{\frac{p-1+\beta}{2}}\vert Tu\vert\, dx,
\end{aligned}
\end{equation}
from which, together with the Cauchy-Schwarz inequality, (\ref{claim100}) for
$I_2^l$ follows, too. Finally, for $I^l_3, l=1,2,\ldots, n$,
integration by part yields
\begin{equation*}
\begin{aligned}
I_3^l=&-\int_\Omega D_{n+l}f(\X u)T\big(\eta^2\deltaX)^{\frac{\beta}{2}} X_l u\big)\, dx\\
=&-\int_\Omega\eta^2 D_{n+l}f(\X u)\deltaX^{\frac \beta 2} X_l Tu\, dx\\
&-\beta\int_\Omega \sum_{k=1}^{2n} \eta^2 D_{n+l}f(\X u)\deltaX^{\frac{\beta-2 }{2}}X_ku X_l u X_kTu\, dx\\
&-2\int_\Omega\eta T\eta D_{n+l} f(\X u)\deltaX^{\frac{\beta}{2}}X_lu\, dx.
\end{aligned}
\end{equation*}
Again by integration by parts, we obtain
\begin{equation*}
\begin{aligned}
I_3^l=&\int_\Omega X_l\left(\eta^2 D_{n+l} f(\X u)\deltaX^{\frac{\beta}{2}}\right)Tu\, dx\\
&+\beta\int_\Omega \sum_{k=1}^{2n} X_k\left(\eta^2 D_{n+l}f(\X u)\deltaX^{\frac{\beta-2}{2}}X_kuX_lu\right) Tu\, dx\\
&-2\int_\Omega\eta T\eta D_{n+l}f(\X u)\deltaX^{\frac{\beta}{2}}X_l u\, dx.
\end{aligned}
\end{equation*}
Thus, by the structure condition (\ref{structure})
\begin{equation}\label{weak11}
\begin{aligned}
\vert I_3^l\vert\le & c(\beta+1)^2 \int_\Omega \eta^2 \deltaX^{\frac{p-2+\beta}{2}}\vert Tu\vert\vert\X\X u\vert\, dx\\
&+c(\beta+1)\int_\Omega \vert\eta\vert\vert \X \eta \vert \deltaX^{\frac{p-1+\beta}{2}}\vert Tu\vert\, dx\\
&+c \int_\Omega \vert\eta\vert\vert T\eta\vert\deltaX^{\frac{p+\beta}{2}}\,
dx
\end{aligned}
\end{equation}
from which, together with the Cauchy-Schwarz inequality, (\ref{claim100}) for
$I_3^l, l=1,2,\ldots, n,$ follows, too. 
Similarly, we can prove (\ref{claim100}) for $I_3^l, l=n+1,\ldots, 2n$.
This finishes the proof of (\ref{claim100}),
and hence that of the lemma.
\end{proof}

\end{document}